\theoremstyle{plain}
\newtheorem{theorem}{Theorem}
\newtheorem{proposition}{Proposition}[section]
\newtheorem{lemma}{Lemma}[section]
\theoremstyle{definition}
\newtheorem{definition}{Definition}[section]
\newtheorem{remark}{Remark}[section]
\newcommand{\dd}{\mathop{}\!\mathrm{d}}
\numberwithin{equation}{section}
\numberwithin{theorem}{section}
\numberwithin{equation}{section}
\newcommand{\Z}{{\mathbb{Z}}}
\newcommand{\C}{{\mathbb{C}}}
\newcommand{\R}{{\mathbb{R}}}
\newcommand{\N}{{\mathbb{N}}}
\newcommand{\T}{{\mathbb{T}}}
\renewcommand{\H}{\mathcal{H}_{A}}
\let\Re=\undefined\DeclareMathOperator*{\Re}{Re}
\def\leq{\leqslant}
\def\geq{\geqslant}
\begin{document}

\title[Growth of sobolev norms for 2d nls with partial harmonic potential]{Growth of sobolev norms for 2d cubic nls with partial harmonic potential}

\author[M. Deng]{Mingming Deng}
\address{Graduate School of China Academy of Engineering Physics,  \ Beijing, \ 100088, \ China,  }
\email{dengmingming18@gscaep.ac.cn}	
\author[X. Su]{Xiaoyan Su}
\address{Laboratory of Mathematics and Complex Systems (Ministry of Equation), School of Mathematics Sciences, Beijing Normal University,  \ Beijing, \ 100875, \ China, }
\email{suxiaoyan0427@qq.com}
\author[J. Zheng]{Jiqiang Zheng}
\address{Institute of Applied Physics and Computational Mathematics,  \ Beijing, \ 100088, \ China,  }
\email{zheng\_jiqiang@iapcm.ac.cn}		
\keywords{Nonlinear Schr\"odinger equation, Partial harmonic potential, Well-posedness, Growth of Sobolev norms. }
\subjclass[2010]{35A01; 35B40; 35Q55}


\maketitle	
\begin{abstract}
In this paper, we study the $2$D cubic nonlinear Schr\"odinger equation (NLS) with the partial harmonic potential. First, we prove the local well-posedness in Bourgain spaces by establishing a key bilinear estimate associated with the partial harmonic oscillator.  Then, we give the polynomial bound of the Sobolev norms for the solutions using the method of the Planchon, Tzvetkov, and Visciglia in \cite{PTV21}.
\end{abstract}

\section{introduction}
We consider the following cubic nonlinear Schr\"odinger equation with the partial harmonic potential in $\mathbb R^2$:
\begin{equation}
\begin{cases}\label{equation}
i \partial_t u +Au \pm |u|^{2}u = 0,\quad (t,x,y) \in \R^3, \\
u(0,x,y) = \varphi(x,y) \in \H^s(\R^2),
\end{cases}
\end{equation}
where $A=-\partial_x^2-\partial_y^2 +y^2$ and $\H^s(\R^2)$ is the Sobolev space adapted to the operator $A$. This operator  models magnetic traps in the Bose--Einstein condensation \cite{ACS, Josser} and has been extensively studied in \cite{ACS, AC20,SWX22, SWX22'}.  The partial harmonic oscillator has an intermediate behavior between the Laplacian operator which has a continuous spectrum, and the (full) harmonic oscillator which has discrete spectrum, and shares the properties of both operators. For example,  for the linear evolution operator $e^{itA}$, the $x$-part can yield large time dispersion, but the $y$-part only provide local time dispersion due to the potential confinement. This operator also serves as an example which has infinite many embedded eigenvalues.


In this paper, we focus on the growth of Sobolev norms  for the solutions to the  nonlinear equation \eqref{equation}. This problem was first investigated by Bourgain for the Laplacian on $\mathbb T^2$ in \cite{B96}, where he used the Fourier multiplier method to prove an iteration bound
\begin{equation}\label{eq-an improved iteration bound}
\|u(t)\|_{H^s(\T^2)} \leq \|u(\tau)\|_{H^s(\T^2)} + C\|u(\tau)\|_{H^s(\T^2)}^{1-\delta}, \quad \delta^{-1} = (s-1)+
\end{equation}
for any $t\in[\tau, \tau+T]$ with $T=T(\|u_0\|_{H^1})$. Here and in the sequel, we write $a+$ ($a-$) to denote any number larger (smaller) than $a$. This implies the following polynomial bound
\begin{align*}
    \|u(t)\|_{H^s(\mathbb T^2)} \leq C|t|^{2(s-1)+},
\end{align*}
and improved the exponential bound derived by a pacing argument. The growth of Sobolev norms reflected the weak turbulence phenomena for the nonlinear dispersive equations.

For the cubic NLS on $\mathbb R^2$, a similar result has been derived by Staffilani in \cite{S97}. She showed the bound \eqref{eq-an improved iteration bound} by using bilinear estimates in Bourgain spaces and derived
    \begin{align*}
    \|u(t)\|_{H^s(\mathbb R^2)} \leq C|t|^{(s-1)+}.
\end{align*}
The growth of Sobolev norms also have been investigated for the NLS on compact Riemannian manifold, see \cite{P12} for more information.

One natural question to ask is that whether we can get similar results for the NLS with a potential.
One recent result on this field was given by Planchon--Tzvetkov--Visciglia in \cite{PTV21} where they discussed the Schr\"odinger operator with the (full) harmonic potential, and obtained polynomial bounds on the growth of solutions to cubic NLS in $\mathbb R^2$.
Their results highly depended on the following bilinear estimate established in \cite{P12}:
\begin{equation}\label{eq-bilinear estimate for tildeA}
\|P_N(u)P_M(v)\|_{L^2((0,T);L^2(\R^2))} \leq C(\min{(M,N)})^{0+}\Big(\frac{\min{(M,N)}}{\max{(M,N)}}\Big)^{\frac12-}
\|P_N(u)\|_{X^{0,b}_T(\R^2)}\|P_M(v)\|_{X^{0,b}_T(\R^2)},
\end{equation}
where $b<1/2$, $P_N,P_M$ are the Littlewood--Paley localization associated with the Hermite operator, $N,M$ are dyadic integers, and $X_T^{s, b}$ denotes the (local) Bourgain spaces adapted to the Hermite operator. Using this, they proved the growth of Sobolev norms of order $s$ with $s\geq 1$ is
\begin{align*}
    \|u(t)\|_{H^s(\mathbb R^2)}+\|\langle x\rangle^s u\|_{L^2(\mathbb R^2)} \leq Ct^{\frac{2}{3}(s-1)+}.
\end{align*}


Inspired by the work of Planchon--Tzvetkov--Visciglia \cite{P12} and the work of Staffilani \cite{S97}, it is natural to conjecture that similar results should be valid for the Schr\"odinger equation with the partial harmonic potential $A$. The aim of this paper is to confirm this conjecture and give the growth rate.

The key step to gain the growth rate is to establish the following bilinear estimate.
\begin{proposition}\label{proposition-Poiret2.3.13}
Let $\delta_0\in(0,\frac12]$ and assume  that $1 \leq M \leq N$ are dyadic integers. Then there exists  $0<b<\frac12$ and $C>0$ such that for every $\delta\in(0,\delta_0]$
\begin{align}\label{prop-1.1}
\big\| \Delta_{N}u\Delta_{M}v \big\|_{L^2(\R;L^2(\mathbb R^2))} \leq CM^\delta\Big(\frac{M}{N}\Big)^{\frac12-\delta}\big\| \Delta_{N}u \big\|_{X^{0,b}(\R\times\R^2)}\big\| \Delta_{M}v \big\|_{X^{0,b}(\R\times\R^2)},
\end{align}
where $\Delta_N$ is given in Subsection \ref{lwd} and $X^{s, b}(\R\times\R^2)$ is the Bourgain space defined in Section \ref{subsection of Bourgain space}.
\end{proposition}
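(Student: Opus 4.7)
The strategy is to adapt the argument of Planchon \cite{P12} for the full harmonic oscillator to the partial harmonic setting by exploiting the tensor splitting $A = -\partial_x^2 + A_y$ with $A_y := -\partial_y^2 + y^2$. Since $A_y$ has discrete spectrum $\{2k+1\}_{k\ge 0}$ with Hermite eigenbasis $\{h_k\}$, the plan is to diagonalize in $y$ and reduce matters to a one-dimensional bilinear Strichartz bound on $\R_x$ summed over Hermite modes. By the standard $X^{s,b}$ transference principle together with a modulation decomposition that trades a loss of $M^{0+}$ for pushing $b$ slightly below $\tfrac12$, the estimate reduces to the free-evolution inequality
\[
\big\|e^{-itA} f_N \cdot e^{-itA} g_M\big\|_{L^2_t L^2_{x,y}} \lesssim M^{\delta}(M/N)^{\frac12-\delta}\|f_N\|_{L^2}\|g_M\|_{L^2},
\]
for $f_N = \Delta_N f_N$ and $g_M = \Delta_M g_M$.

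Expand $f_N = \sum_k f_N^{(k)}(x) h_k(y)$ and $g_M = \sum_\ell g_M^{(\ell)}(x) h_\ell(y)$. The $\Delta_N$-localization forces the $\xi_x$-Fourier support of $f_N^{(k)}$ into an annulus of radius $\sim (N^2 - 2k-1)^{1/2}$, and similarly for $g_M^{(\ell)}$; in particular $k \lesssim N^2$ and $\ell \lesssim M^2$. Using $e^{-itA} = e^{it\partial_x^2} e^{-itA_y}$ one has
\[
e^{-itA}f_N \cdot e^{-itA}g_M = \sum_{k,\ell} e^{-2it(k+\ell+1)}\bigl(e^{it\partial_x^2}f_N^{(k)}\bigr)(x)\bigl(e^{it\partial_x^2}g_M^{(\ell)}\bigr)(x)\, h_k(y)h_\ell(y).
\]
Taking $L^2_y$ via Plancherel against the Hermite basis, the problem reduces to summing, against the triple-product coefficients $c_{k,\ell,m} = \int h_k h_\ell h_m\,dy$, the one-dimensional bilinear Strichartz estimate
\[
\|e^{it\partial_x^2} F \cdot e^{it\partial_x^2} G\|_{L^2_{t,x}} \lesssim (\text{gap in } \xi_x)^{-1/2}\|F\|_{L^2}\|G\|_{L^2},
\]
which in the generic configuration produces the factor $(M/N)^{1/2}$ since the $\xi_x$-annulus of $f_N^{(k)}$ has radius $\sim N$ while that of $g_M^{(\ell)}$ has radius $\lesssim M$.

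The main obstacle is the summation over Hermite modes $(k,\ell,m)$. Unlike the case $\R \times \T$ of \cite{P12} (where $e^{iny}e^{i\ell y} = e^{i(n+\ell)y}$ gives exact diagonalization and the loss is a divisor bound $M^{0+}$), the triple product $c_{k,\ell,m}$ enjoys only polynomial decay and is supported on a non-trivial range, so one has to combine Schur-type orthogonality with the spectral constraints $2k+1 \lesssim N^2$ and $2\ell+1 \lesssim M^2$ to absorb the coupling into the loss $M^{\delta}$. A clean way to implement this is to keep the pointwise bound $|h_k(y)h_\ell(y)| \lesssim 1$ together with the finite-range structure of the Hermite product, or alternatively to invoke a Hermite--Strichartz estimate as in Poiret's thesis; the delicate point is the tradeoff between $\delta$ and $b<\tfrac12$. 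Reassembling the modulation pieces then yields \eqref{prop-1.1}.
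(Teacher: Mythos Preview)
Your approach via the tensor splitting $A=-\partial_x^2+A_y$ and reduction to the one-dimensional bilinear Strichartz inequality has a genuine gap. The claim that ``the $\xi_x$-annulus of $f_N^{(k)}$ has radius $\sim N$'' is false in general: the localisation $\xi^2+2k+1\in[N^2/2,2N^2]$ only gives $|\xi|\sim(N^2-2k-1)^{1/2}$, which degenerates to $O(1)$ when the Hermite mode carries almost all of the energy, i.e.\ when $2k+1\sim N^2$. In that regime there is \emph{no} separation between the $\xi_x$-supports of $f_N^{(k)}$ and $g_M^{(\ell)}$, so the one-dimensional bilinear estimate yields no gain whatsoever, and your ``generic configuration'' argument breaks down precisely where the analysis is most delicate. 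Moreover, even in the favourable regime you have not explained how to sum over the $\sim N^2\cdot M^2$ pairs $(k,\ell)$ against the triple-product coefficients $c_{k,\ell,m}$ with only an $M^\delta$ loss; the analogy with $\R\times\T$ is misleading because there the product structure is exactly diagonal, whereas here the Hermite triple products have slow decay and a thick support, and ``Schur-type orthogonality'' is not a substitute for an actual argument.

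The paper proceeds quite differently and never separates the $x$ and $y$ variables at the level of the bilinear estimate. It first proves an $\H^1$ bilinear bound $\|e^{itA}\Delta_N\varphi_1\,e^{itA}\Delta_M\varphi_2\|_{L^2_T\H^1}^2\lesssim MN\|\Delta_N\varphi_1\|_{L^2}^2\|\Delta_M\varphi_2\|_{L^2}^2$ by a direct Morawetz-type interaction estimate for the full operator $A$ (together with local Strichartz to handle the commutators $[\nabla,A]$ and $[y,A]$); a Littlewood--Paley decomposition of the product then converts this into the $L^2$ bilinear bound with gain $M/N$. After the standard transference to $X^{0,b}$ with $b>\tfrac12$, the crucial step of pushing $b$ below $\tfrac12$ is \emph{not} done by a modulation decomposition as you suggest, but by interpolating against the crude estimate $\|\Delta_N u\,\Delta_M v\|_{L^2_{t,z}}\lesssim M^{1+\varepsilon}\|\Delta_N u\|_{X^{0,1/4+\varepsilon}}\|\Delta_M v\|_{X^{0,1/4+\varepsilon}}$ coming from $L^4_tL^2_z\times L^4_tL^\infty_z$ and Bernstein. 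This interpolation is what produces simultaneously the loss $M^\delta$ and the exponent $b<\tfrac12$.
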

To get the above result, we first give some Bernstein type inequalities associated to the Schr\"odinger operator $A$. Then, using a Morawetz type estimate and some interpolation inequalities, we obtain the  bilinear estimate  \eqref{prop-1.1} for the operator $A$.  We refer the details to Section \ref{section3}.

Proposition \ref{proposition-Poiret2.3.13} and a trilinear estimate  yield the local well-posedness result in Bourgain spaces. Then, by defining suitable modified energies $\mathcal{S}_{2k+2}$ and $\mathcal{R}_{2k+2}$, we prove our main result:
\begin{theorem}[Growth of Sobolev norms]\label{Theorem-growth}
Let $\varepsilon>0$ and $k\in\N$. For every global solution $u$ to \eqref{equation} such that $u \in\mathcal{C}(\R, \H^{2k}(\R ^2))$ and
\begin{equation}\label{eq-A1}
\sup_{t\in\R}\|u(t, x,y)\|_{\H^1(\R ^2)}<\infty,
\end{equation}
there exists a constant $C$ such that
\begin{equation}\label{eq-growth}
\| u \|_{\H^{2k}(\R ^2)} \leq C\langle t \rangle^{\frac{2}{3}(2k-1)+\varepsilon}.
\end{equation}
\end{theorem}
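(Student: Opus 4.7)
The overall strategy is to follow the modified-energy scheme of Planchon--Tzvetkov--Visciglia \cite{PTV21}, replacing their bilinear estimate by our Proposition \ref{proposition-Poiret2.3.13}. I would introduce an augmented functional
\[
\tilde E_{2k}(u) := \|A^k u\|_{L^2(\R^2)}^2 + \mathcal{S}_{2k+2}(u) + \mathcal{R}_{2k+2}(u),
\]
where the quartic correction $\mathcal{S}_{2k+2}$ is designed to cancel the quasi-resonant top-order terms in $\tfrac{d}{dt}\|A^k u\|_{L^2}^2$ and $\mathcal{R}_{2k+2}$ absorbs the secondary non-resonant contributions (in the spirit of \cite{B96, PTV21}). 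Under hypothesis \eqref{eq-A1}, the correction terms satisfy $|\mathcal{S}_{2k+2}(u)| + |\mathcal{R}_{2k+2}(u)| \ll \|u\|_{\H^{2k}}^2$, so $\tilde E_{2k}(u)$ and $\|u\|_{\H^{2k}}^2$ are comparable and it suffices to control $\tilde E_{2k}$.

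The key analytic input is a one-step iteration bound of the form
\[
\|u(\tau+T)\|_{\H^{2k}} \le \|u(\tau)\|_{\H^{2k}} + C\,\|u(\tau)\|_{\H^{2k}}^{1-\delta},
\]
valid on every interval of length $T = T(\|u\|_{L^\infty_t \H^1})$ given by the local well-posedness theory associated with Proposition \ref{proposition-Poiret2.3.13}. To obtain it, I would integrate $\tfrac{d}{dt}\tilde E_{2k}(u)$ over $[\tau, \tau+T]$; the cancellations engineered into $\mathcal{S}_{2k+2}+\mathcal{R}_{2k+2}$ reduce the integrand to multilinear expressions in which no single factor carries all $2k$ derivatives. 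Decomposing each factor via the Littlewood--Paley projections $\Delta_N$, I would then pair the two highest-frequency factors and apply \eqref{prop-1.1}; the smoothing gain $(M/N)^{1/2-\delta}$ combined with interpolation between the conserved $\H^1$-norm and $\H^{2k}$-norm transfers enough derivatives from the high-frequency factor to the low-frequency one to produce the sub-linear exponent $1-\delta$ on the right-hand side, with $\delta$ as large as $\tfrac{3}{2(2k-1)}-\varepsilon$.

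With this bound at hand, the proof concludes via the standard iteration argument of Bourgain \cite{B96}. Starting at $\tau = 0$ and iterating over roughly $|t|/T$ intervals, a telescoping difference-inequality argument gives
\[
\|u(t)\|_{\H^{2k}} \le C \langle t \rangle^{1/\delta} = C \langle t \rangle^{\frac{2}{3}(2k-1)+\varepsilon},
\]
which is precisely \eqref{eq-growth}.

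The main obstacle I expect lies in the construction step: producing the precise algebraic form of $\mathcal{S}_{2k+2}$ and $\mathcal{R}_{2k+2}$ so as to cancel the quasi-resonant high-high interactions in $\tfrac{d}{dt}\|A^k u\|_{L^2}^2$, and organizing the resulting multilinear expressions so that \eqref{prop-1.1} can always be applied to the two highest-frequency factors with the smoothing factor $(M/N)^{1/2-\delta}$ exactly compensating the $2k$-derivative loss. The partial-harmonic setting makes this more delicate than the fully harmonic case of \cite{PTV21} because $\Delta_N$ is a joint frequency projection in the continuous $x$-variable and the discrete Hermite $y$-variable, so separate regimes of $x$- and $y$-frequencies must be treated on the same footing in the multilinear expansion.
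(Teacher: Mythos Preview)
Your overall strategy---modified energy, bilinear estimate \eqref{prop-1.1} on the two highest frequencies, then Bourgain iteration---is exactly the one the paper uses. However, two concrete points in your execution do not match the paper and, as written, constitute a real gap.

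First, you build the modified energy on $\|A^k u\|_{L^2}^2$. The paper does not do this: it replaces the spatial operator $A^k$ by time derivatives $\partial_t^k$, working with $\|\partial_t^{k} A u\|_{L^2}^2$ and proving in Proposition~\ref{Proposition-partial_t estimate} that $\|\partial_t^k u\|_{\H^s}$ and $\|u\|_{\H^{2k+s}}$ are comparable modulo lower order. This is not cosmetic. The whole point is that $\partial_t$ obeys the ordinary Leibniz rule on the nonlinearity $|u|^2u$, so $\partial_t^k(|u|^2u)$ is a finite sum of products $\partial_t^{j_1}u\,\partial_t^{j_2}\bar u\,\partial_t^{j_3}u$, which is exactly what allows the correction terms $\mathcal{S}_{2k+2}$ and the residual $\mathcal{R}_{2k+2}$ to be written down explicitly (see \eqref{eq-S}--\eqref{eq-R}). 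By contrast, $A^k$ acting on a product has no clean Leibniz rule because $A=-\Delta+y^2$ mixes differentiation and multiplication; the commutators $[A,\cdot]$ proliferate and the ``precise algebraic form'' you flag as the main obstacle becomes intractable. The paper's device of trading $A$ for $\partial_t$ via the equation is the missing idea that resolves precisely the obstacle you identify in your last paragraph.

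Second, you misread the roles of $\mathcal{S}_{2k+2}$ and $\mathcal{R}_{2k+2}$. They are not two layers of correction added to the energy. In the paper (Proposition~\ref{Propostion-modified energy}) only $\mathcal{S}_{2k+2}$ is added to $\tfrac12\|\partial_t^k A u\|_{L^2}^2$; the identity is
\[
\frac{d}{dt}\Bigl(\tfrac12\|\partial_t^k A u\|_{L^2}^2+\mathcal{S}_{2k+2}(u)\Bigr)=\mathcal{R}_{2k+2}(u),
\]
so $\mathcal{R}_{2k+2}$ is the residual \emph{flux} to be integrated over $[0,T]$, not a second correction. The two are then bounded separately: $|\mathcal{S}_{2k+2}|\lesssim\|\varphi\|_{\H^{2k+2}}^{4k/(2k+1)+\delta}$ pointwise in time (so the modified energy is equivalent to $\|u\|_{\H^{2k+2}}^2$), while $\bigl|\int_0^T\mathcal{R}_{2k+2}\bigr|\lesssim\|\varphi\|_{\H^{2k+2}}^{(8k+1)/(4k+2)+\delta}$; it is this latter exponent, strictly below $2$, that drives the iteration to the rate $\langle t\rangle^{\frac{2}{3}(2k-1)+\varepsilon}$. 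The frequency analysis and application of \eqref{prop-1.1} you describe are indeed used, but only in the estimate of $\int_0^T\mathcal{R}_{2k+2}$, together with the $X^{s,b}$ bounds on $\partial_t^n u$ from Proposition~\ref{Proposition-strichartz estimate}.
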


\begin{remark}
  By conservation of energy, the assumption \eqref{eq-A1} is naturally satisfied for the defocusing case, but not for the focusing case.
\end{remark}


The organization of the paper is as follows. In Section \ref{section2}, we introduce the preliminaries which includes the definition of Hermite functions, some function spaces adapted to the operator $A$, and Bernstein-type inequalities and a multiplier theorem.  In Section \ref{section3}, we prove the Proposition 1.1 and  the local well-posedness for the equation \eqref{equation} using Bourgain spaces;  In Section \ref{sec-growth}, we obtain the growth of high order Sobolev norms for solution to \eqref{equation}.
\section{preliminaries}
\label{section2}
Let $z=(x,y)\in \R^2 $ be the space variable, and $\zeta=(\xi, \eta)\in \R^2$ be the frequency variable. Denote $A_x=-\partial_x^2$ and $A_y=-\partial_y^2+y^2$.

\subsection{Hermite functions (See \cite{Than93book})}

The Hermite functions $h_k$ are defined on $\mathbb{R}$ by the formula
\begin{align*}
h_k(y) = (2^kk!\pi^\frac12)^{-\frac12}H_k(y)e^{-\frac{y^2}{2}},\quad k\in\N,
\end{align*}
where $H_k$ denotes the Hermite polynomial of degree $k$.
It is the eigenfunction of $A_y$ with eigenvalues $2k+1$, that is,
$A_yh_k = (2k+1)h_k.$
In addition, the functions $\{h_k\}_{k\in \mathbb N}$  form an orthonormal family in $L^2(\R)$. Hence,  any function $f \in L^2(\mathbb R)$, has an expansion in Hermite functions
$$f(y) = \sum_{k=0}^\infty P_kf(y),$$
where $P_k$ is the projection to the span of $h_k$, defined by
\begin{align*}
P_kf(x) = \int_{\R}f(y)h_k(y)\dd y h_k(x).
\end{align*}

The next lemma gives the almost orthogonality for the  product of Hermite functions with different frequencies.
\begin{lemma}[See \cite{BTT13}]\label{Lemma-BTT-lemma 7.5}
Let $\delta>0$ and $K\geq0$. Assume $N_0\geq N_1^{1+\delta}$ and $N_1\geq N_2\geq N_3$, there exists $C_K>0$ such that for every $k_i$ satisfying $N_i\leq\sqrt{2k_i+1}\leq2N_i$, $ i=0,1,2,3$, we have
\begin{align*}
\Big| \int_\R h_{k_0}h_{k_1}h_{k_2}h_{k_3}\dd y \Big| \leq C_K(1+N_0)^{-K}.
\end{align*}
\end{lemma}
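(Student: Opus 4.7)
The plan is to exploit the eigenvalue equation $A_y h_{k_0}=(2k_0+1)h_{k_0}$ and iterated integration by parts, using the scale gap $N_0\geq N_1^{1+\delta}$ to trade polynomial growth in $N_1$ against spectral decay in $N_0$. For any integer $K'\geq 0$, the self-adjointness of $A_y$ on Schwartz functions gives
\begin{align*}
\int_\R h_{k_0}h_{k_1}h_{k_2}h_{k_3}\dd y
 \;=\; (2k_0+1)^{-K'}\int_\R h_{k_0}\,A_y^{K'}\!\bigl(h_{k_1}h_{k_2}h_{k_3}\bigr)\dd y,
\end{align*}
so, since $2k_0+1\geq N_0^2$, the task reduces to a polynomial-in-$N_1$ bound on $\|A_y^{K'}(h_{k_1}h_{k_2}h_{k_3})\|_{L^2}$.

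To estimate that $L^2$ norm, I would work with the standard creation and annihilation operators $a=\tfrac{1}{\sqrt2}(\partial_y+y)$ and $a^{*}=\tfrac{1}{\sqrt2}(-\partial_y+y)$, which satisfy the ladder relations $ah_k=\sqrt{k}\,h_{k-1}$, $a^{*}h_k=\sqrt{k+1}\,h_{k+1}$, and $A_y=2a^{*}a+1$. A direct calculation gives the quasi-Leibniz identity
\begin{align*}
a(fg)=(af)g+\tfrac12 f(ag)-\tfrac12 f(a^{*}g),
\end{align*}
together with an analogous formula for $a^{*}$. Since $A_y^{K'}$ is a polynomial of total degree $2K'$ in $a,a^{*}$, iterating the quasi-Leibniz identity across the triple product expresses $A_y^{K'}(h_{k_1}h_{k_2}h_{k_3})$ as a linear combination of at most $C_{K'}$ terms of the form $c\,h_{m_1}h_{m_2}h_{m_3}$ with $|m_i-k_i|\leq 2K'$. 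Because each individual application of $a$ or $a^{*}$ to a Hermite function $h_{k_i}$ produces a coefficient of size at most $\sqrt{k_i+1}\lesssim N_1$, and $2K'$ such applications occur in total, the scalar obeys $|c|\leq C_{K'} N_1^{2K'}$.

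Combining this with the classical uniform bound $\|h_m\|_{L^\infty(\R)}\lesssim 1$ and H\"older's inequality, each triple product satisfies
\begin{align*}
\|h_{m_1}h_{m_2}h_{m_3}\|_{L^2(\R)}\leq \|h_{m_1}\|_{L^\infty}\|h_{m_2}\|_{L^\infty}\|h_{m_3}\|_{L^2}\lesssim 1,
\end{align*}
so $\|A_y^{K'}(h_{k_1}h_{k_2}h_{k_3})\|_{L^2}\leq C_{K'}N_1^{2K'}$. Cauchy--Schwarz with $\|h_{k_0}\|_{L^2}=1$ then yields
\begin{align*}
\Bigl|\int_\R h_{k_0}h_{k_1}h_{k_2}h_{k_3}\dd y\Bigr|\leq C_{K'}\,N_0^{-2K'}\,N_1^{2K'}.
\end{align*}
The scale-gap hypothesis $N_0\geq N_1^{1+\delta}$ gives $N_1^{2K'}\leq N_0^{2K'/(1+\delta)}$, hence a bound of $C_{K'} N_0^{-2K'\delta/(1+\delta)}$; given $K\geq 0$, choosing $K'=\lceil K(1+\delta)/(2\delta)\rceil$ produces the claimed decay $C_K(1+N_0)^{-K}$, the passage from $N_0$ to $1+N_0$ being harmless since $N_0\geq 1$.

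The main technical obstacle is the combinatorial bookkeeping in the expansion of $A_y^{K'}$ across the triple product: one must verify that the iterated quasi-Leibniz rule produces only $O_{K'}(1)$ sub-terms and that the scalar factor on each is bounded by $C_{K'}N_1^{2K'}$, rather than something growing faster in $K'$. This hinges on the two features above -- each application of $a$ or $a^{*}$ to $h_{k_i}$ costs a factor of at most $N_1$ and shifts the Hermite index by exactly one, so the total polynomial growth in $N_1$ matches exactly the number $2K'$ of $a/a^{*}$ factors. Once this step is pinned down, the rest is a routine Cauchy--Schwarz argument together with the straightforward choice of $K'$ in terms of $K$ and $\delta$.
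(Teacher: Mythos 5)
The paper gives no proof of this lemma---it is imported verbatim from \cite{BTT13}---and your argument is correct and is essentially the standard one for such multilinear Hermite estimates: use self-adjointness to move $A_y^{K'}$ from $h_{k_0}$ (where it contributes $(2k_0+1)^{-K'}\leq N_0^{-2K'}$) onto the triple product, and control the resulting ladder-operator expansion by $C_{K'}N_1^{2K'}$, which the gap $N_0\geq N_1^{1+\delta}$ converts into arbitrary decay in $N_0$. The three points that need checking all hold: the quasi-Leibniz identity $a(fg)=(af)g+\tfrac12 f(ag)-\tfrac12 f(a^{*}g)$ is a correct two-line computation, the index shifts of size at most $2K'$ only perturb the coefficients by $K'$-dependent constants, and $\sup_m\|h_m\|_{L^\infty(\R)}<\infty$ is classical, so the bound $\|h_{m_1}h_{m_2}h_{m_3}\|_{L^2(\R)}\lesssim 1$ and the final Cauchy--Schwarz step go through.
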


\subsection{Sobolev spaces adapted to $A$ (See \cite{SWX22'})}
Using the Fourier transform with respect to $x$ and the Hermite expansion with respect to $y$,  the operator $A$ when acting on a function $u\in C_0^\infty(\mathbb R^2)$ can be written as
\begin{align*}
Au(x,y) = \sum_{k=0}^\infty\int_\R e^{ix\xi} (\xi^2 + 2k + 1)\mathcal{F}_{x \to \xi}P_ku(\xi,y)\dd \xi.
\end{align*}
By the functional calculus, we define the fractional power $A^{s/2}$ for $s\in \mathbb R$ on $ C_0^\infty(\mathbb R^2)$ by
\begin{align*}
    A^{s/2}u(x,y) = \sum_{k=0}^\infty \int_\R e^{ix\xi}(\xi^2 + 2k + 1)^{s/2}\mathcal{F}_{x\to \xi}P_ku(\xi,y)\dd \xi.
\end{align*}
 The Sobolev space $\H^s(\R^2)$  adapted to $A$ with index $s\in \mathbb R$ is defined as
\begin{align*}
\H^s(\R^2) = D(A^{s/2})=\big\{ u\in L^2(\R^2): A^{s/2}u\in L^2(\R^2) \big\}
\end{align*}
with the norm $\|u\|_{\H^s(\R^2)} = \big\|A^{s/2}u\big\|_{L^2(\R^2)}$.
\begin{remark}

From \cite{DG09}, we have the following equivalence of the Sobolev norms:
for every $s\geq0$, there exists a constant $C>0$ such that
\begin{equation}\label{eq-equivalence}
\frac1C\big( \big\| D^s u \big\|_{L^2(\R ^2)}^2 + \big\| \langle y \rangle^su \big\|_{L^2(\R ^2)}^2 \big) \leq \|u\|_{\H^s(\R ^2)}^2 \leq
C\big( \big\| D^s u \big\|_{L^2(\R ^2)}^2 + \big\| \langle y \rangle^su \big\|_{L^2(\R ^2)}^2\big),
\end{equation}
where $D^s$ is the operator associated with the Fourier multiplier $|\xi^2+\eta^2|^{s/2}$ and $\langle y \rangle = \big(1+y^2\big)^\frac12$.
Using this result, obtaining  \eqref{eq-growth} is reduced to the estimate
\begin{equation}
\big\| D^{2k}u \big\|_{L^2(\R ^2)} + \big\| \langle y \rangle^{2k} u \big\|_{L^2(\R ^2)} \leq C\langle t \rangle^{\frac{2k-1}{2}+\varepsilon}.
\end{equation}
\end{remark}
For later use, we recall a Mikhlin-type multiplier theorem for the operator $A$ in \cite{SWX22}. Let $\mathcal D^N_k$ be the forward finite difference with respect to the variable $k$ of order $N$.
\begin{lemma}
[Multiplier theorem for $A$]\label{Theorem-mikhlin multiplier theorem}
Suppose that a function $m(\xi,k)$ defined on $\R\times\N$ satisfies that for all $0 \leq N \leq 2$,
\begin{align*}
\Big| \frac{\partial^N}{\partial {\xi}^N}m(\xi,k) \Big| \leq C(\xi^2+2k+1)^{-\frac N2}\quad \text{and}\quad\big|\mathcal D^N_k m(\xi,k) \big|\leq C(\xi^2+2k+1)^{-N}.
\end{align*}
Then, the operator $T_m$ defined by
\begin{align*}
T_mf(x,y) = \sum_{k=0}^\infty\int_\R e^{ix\xi}m(\xi,k)\mathcal{F}_{x \to \xi}P_kf(\xi,y)\dd \xi
\end{align*}
is bounded on $L^p(\R^2)$ for any $1<p<\infty$.
\end{lemma}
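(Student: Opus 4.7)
The statement is a joint Mikhlin--Hörmander spectral multiplier theorem for the commuting pair of self-adjoint operators $A_x=-\partial_x^2$ (continuous spectrum, conjugate variable $\xi$) and $A_y=-\partial_y^2+y^2$ (discrete spectrum $\{2k+1\}$). The two Mikhlin-type conditions express the natural joint scaling $\lambda^2=\xi^2+2k+1$, matching the spectrum of $A=A_x+A_y$. The plan is to reduce the result to a Calderón--Zygmund-type argument for dyadic spectral pieces of $m$, combined with Littlewood--Paley theory adapted to $A$.

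First, I would fix a smooth dyadic partition of unity $1=\sum_{j\ge 0}\varphi_j(t)$ with $\varphi_j$ supported in $[2^{j-1},2^{j+1}]$ and decompose $m(\xi,k)=\sum_j m_j(\xi,k)$, where $m_j=m\cdot\varphi_j\bigl(\sqrt{\xi^2+2k+1}\bigr)$ is supported on the dyadic spectral shell $\sqrt{\xi^2+2k+1}\sim 2^j$. A slightly fattened version $\widetilde\varphi_j$ gives an $A$-adapted Littlewood--Paley projector $\Delta_j=\widetilde\varphi_j(\sqrt A)$ satisfying $T_{m_j}=T_{m_j}\Delta_j$, so by almost orthogonality it suffices to prove (i) the $L^p(\R^2)\to L^p(\R^2;\ell^2)$ square-function bound $\bigl\|(\sum_j|\Delta_j f|^2)^{1/2}\bigr\|_{L^p}\lesssim\|f\|_{L^p}$, and (ii) the uniform estimate $\|T_{m_j}\|_{L^p\to L^p}\le C$ with $C$ independent of $j$. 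For (i), the heat semigroup factorizes as $e^{-tA}=e^{-tA_x}\otimes e^{-tA_y}$ and hence has Gaussian-type kernel bounds (Euclidean heat kernel times Mehler's kernel), so the general theory of spectral multipliers for non-negative self-adjoint operators with Gaussian bounds applies and yields the required square function estimate.

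For (ii), I would write the kernel of $T_{m_j}$ as
\[
K_{m_j}(x,y;x',y')=\sum_{k=0}^\infty\int_\R e^{i(x-x')\xi}\,m_j(\xi,k)\,h_k(y)h_k(y')\,d\xi,
\]
and show uniform Calderón--Zygmund-type bounds at scale $2^{-j}$ by using the two Mikhlin assumptions: integration by parts twice in $\xi$ uses the hypothesis on $\partial_\xi^N m$, while Abel summation twice in $k$ uses the hypothesis on $\mathcal D_k^N m$. Combined with Mehler-type bounds for the Hermite kernel to control the $k$-sums weighted by $\widetilde\varphi_j$, this should produce a decay of the form
\[
|K_{m_j}(x,y;x',y')|\lesssim 2^{2j}\bigl(1+2^j|x-x'|\bigr)^{-2}\bigl(1+2^j\rho(y,y')\bigr)^{-2}
\]
for an appropriate pseudo-distance $\rho$ adapted to the Hermite metric. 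Such estimates yield weak $(1,1)$ bounds uniformly in $j$; Marcinkiewicz interpolation with the trivial $L^2$ bound (from Plancherel applied to the joint spectral decomposition) then gives uniform $L^p$ bounds for $1<p<\infty$. Summation over $j$ via the square function from step (i) concludes the proof.

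The main obstacle is the kernel estimate in the second step: the discrete Hermite sum must be handled by Abel summation (hence the $\mathcal D_k$ conditions), and one needs a sufficiently strong off-diagonal bound for the Hermite heat/Mehler-type kernels to convert the Mikhlin assumptions on $m_j$ into genuine pointwise decay of $K_{m_j}$ at scale $2^{-j}$. An alternative route, which avoids explicit kernel estimates, is to exploit the tensor structure $A=A_x\otimes I+I\otimes A_y$ and apply successively the classical Mikhlin theorem in $\xi$ (for each fixed $k$) and a Hermite Mikhlin multiplier theorem in $k$ (for each fixed $\xi$), but verifying that the joint hypotheses give the two separate hypotheses with constants uniform in the frozen variable is itself subtle because the scale $(\xi^2+2k+1)^{-1/2}$ couples the two variables; making this route rigorous would require a vector-valued inequality for the Hermite multiplier step. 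Either way, the technical heart of the argument lies in controlling the joint continuous/discrete spectral calculus.
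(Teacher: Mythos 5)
The paper does not prove this lemma at all: it is explicitly \emph{recalled} from the reference \cite{SWX22} (``A Mikhlin--H\"ormander multiplier theorem for the partial harmonic oscillator''), so there is no in-paper argument to compare yours against. Your overall strategy --- dyadic decomposition of $m$ along the joint spectral shells $\xi^2+2k+1\sim 2^{2j}$, kernel estimates obtained by integrating by parts in $\xi$ and Abel-summing in $k$, and Calder\'on--Zygmund theory anchored at the $L^2$ bound from the spectral theorem --- is the standard route for such theorems and is very likely close in spirit to what \cite{SWX22} does. That said, two points in your assembly are genuinely problematic as written.

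First, the summation logic at the end does not close. Uniform $L^p\to L^p$ bounds on the pieces $T_{m_j}$, even combined with the Littlewood--Paley square function estimate, do not let you sum over $j$: you would need a vector-valued $L^p(\ell^2)\to L^p(\ell^2)$ bound for the family $\{T_{m_j}\}_j$, which does not follow from the scalar bounds alone. Likewise, ``weak $(1,1)$ uniformly in $j$'' is not a summable quantity. The standard repair is to sum your claimed pointwise kernel bounds for $K_{m_j}$ \emph{at the kernel level} into a H\"ormander integral condition for the full kernel $K_m$ (the sum over $j$ of $2^{2j}(1+2^j d)^{-2}$-type bounds converges off the diagonal and yields the required regularity), and then apply Calder\'on--Zygmund theory once to $T_m$ itself. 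Second, the pointwise bound you posit for the Hermite part of the kernel, with decay in a ``pseudo-distance $\rho(y,y')$ adapted to the Hermite metric,'' is precisely the technical heart of the matter and cannot be waved through: the kernels $\sum_k \varphi_j(\sqrt{2k+1})\,m_j\,h_k(y)h_k(y')$ do not obey naive Gaussian-type decay uniformly in $y,y'$ because of the turning-point (Airy) behaviour of $h_k$ near $|y|\sim\sqrt{2k+1}$, and two Abel summations in $k$ with the hypothesis $|\mathcal D_k^N m|\le C(\xi^2+2k+1)^{-N}$ must be combined with genuinely quantitative bounds on the localized spectral projectors of $A_y$ (or on the Mehler kernel with complex time) to produce the off-diagonal decay you assert. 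You correctly identify this as the main obstacle, but without it the argument is a plan rather than a proof; the same caveat applies to your alternative ``iterate the two one-dimensional theorems'' route, which, as you note, founders on the coupling of the scales through $(\xi^2+2k+1)^{1/2}$.
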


%

\subsection[Littlewood--Paley theory]{Littlewood--Paley theory for \(A\)}\label{lwd}
Let $\phi$ be a smooth function defined on $\mathbb R^+$ such that $\phi(\lambda) = 1$ for $0\leq\lambda\leq1$, and $\phi(\lambda)=0$ for $\lambda\geq2$. For each dyadic integer $N\in2^{\Z}$, we define
\begin{align*}
\phi_N(\lambda) := \phi\Big(\frac{\lambda}{N^2}\Big), \quad \text{and} \quad \psi_N(\lambda) := \phi_N(\lambda) - \phi_{\frac N2}(\lambda).
\end{align*}
Clearly, $\{ \psi_N(\lambda) \}_{N\in2^{\Z}}$ forms a partition of unity for $\lambda\in(0,\infty)$. Then we define the Littlewood-Paley projection as follows:
\begin{align*}
S_Nu(z) &:=  \phi_N( A )u(z) = \sum_{k=0}^\infty\int_\R e^{ix\xi}\phi\Big( \frac{\xi^2+2k+1}{N^2} \Big)\mathcal{F}_{x \to \xi}P_ku(\xi,y)\dd \xi,\\
\Delta_Nu(z) &:=  \psi_N (A)u(z) = \sum_{k=0}^\infty\int_\R e^{ix\xi}\psi\Big( \frac{\xi^2+2k+1}{N^2} \Big)\mathcal{F}_{x \to \xi}P_ku(\xi,y)\dd \xi.
\end{align*}

  For the frequency localized operator $S_N$  and $\Delta_N$, we have the following estimates.
\begin{lemma}[Bernstein-type estimates]\label{Lemma-Bernstein estimate}
Let $1<p\leq q<\infty$ and $s\geq 0$.
\begin{enumerate}
\item[$(1)$] $\|A^{s/2}S_Nu\|_{L^q(\R^2)} \lesssim_s N^{s+(\frac2p-\frac2q)}\|u\|_{L^p(\R^2)}$ \label{Bernstein-1};
\item[$(2)$] $ N^{s}\|\Delta_N u\|_{L^p(\R^2)}\lesssim_s \|A^{s/2}\Delta_Nu\|_{L^p(\R^2)} \lesssim_s N^{s}\|\Delta_N u\|_{L^p(\R^2)}$;
\end{enumerate}
\end{lemma}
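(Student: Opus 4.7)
Both parts reduce to a master Bernstein estimate for smooth spectral cutoffs: for every $F\in C_c^\infty((0,\infty))$ and $1<p\le q<\infty$,
\begin{equation}\label{masterBern}
\|F(A/N^2)u\|_{L^q(\R^2)}\lesssim_F N^{2(\frac1p-\frac1q)}\|u\|_{L^p(\R^2)}.
\end{equation}
Given \eqref{masterBern}, part (1) follows from $A^{s/2}S_N=N^s G(A/N^2)$ with $G(\mu)=\mu^{s/2}\phi(\mu)$. For the upper bound in (2), I would choose $\tilde\psi\in C_c^\infty((0,\infty))$ with $\tilde\psi\equiv 1$ on $\supp\psi$, so that $\Delta_N=\tilde\psi(A/N^2)\Delta_N$ and $A^{s/2}\Delta_N u=N^s\tilde G(A/N^2)\Delta_N u$ with $\tilde G(\mu)=\mu^{s/2}\tilde\psi(\mu)\in C_c^\infty((0,\infty))$; the $p=q$ case of \eqref{masterBern} then closes the bound. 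The lower half of (2) follows analogously by writing $N^s\Delta_N u=H(A/N^2)A^{s/2}\Delta_N u$ with $H(\mu)=\mu^{-s/2}\tilde\psi(\mu)\in C_c^\infty((0,\infty))$, and again applying the $p=q$ case.

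\textbf{Heat-semigroup factorization.} To prove \eqref{masterBern}, I would split
\[
F(A/N^2)=e^{-A/N^2}\cdot\bigl[e^{A/N^2}F(A/N^2)\bigr]
\]
and control the two factors separately. The second factor has spectral symbol $e^\mu F(\mu)\in C_c^\infty((0,\infty))$; substituting $\mu=(\xi^2+2k+1)/N^2$ yields a symbol $m(\xi,k)$ supported on $\xi^2+2k+1\lesssim N^2$, for which chain-rule and finite-difference estimates produce, uniformly in $N$,
\[
|\partial_\xi^j m|\lesssim(\xi^2+2k+1)^{-j/2},\qquad |\mathcal D_k^j m|\lesssim(\xi^2+2k+1)^{-j}\qquad(j=0,1,2).
\]
Lemma \ref{Theorem-mikhlin multiplier theorem} then gives $\|e^{A/N^2}F(A/N^2)\|_{L^p\to L^p}\lesssim 1$ uniformly in $N$. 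The first factor is controlled by the heat-kernel bound
\begin{equation}\label{hkb}
\|e^{-tA}\|_{L^p(\R^2)\to L^q(\R^2)}\lesssim t^{-(\frac1p-\frac1q)},\qquad 0<t\le 1,
\end{equation}
specialized at $t=1/N^2\le 1$ (only $N\ge 1$ contributes since $A\ge 1$ forces $\phi_N(A)=0$ otherwise). Composing the two gives the $L^p\to L^q$ bound \eqref{masterBern}.

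\textbf{Heat-kernel bound and main obstacle.} Since $A_x=-\partial_x^2$ and $A_y=-\partial_y^2+y^2$ commute on $\R^2$, $e^{-tA}=e^{-tA_x}e^{-tA_y}$ factorizes. The factor $e^{-tA_x}$ is the 1D Gaussian heat semigroup for which Young's convolution inequality yields $\|e^{-tA_x}\|_{L^p_x\to L^q_x}\lesssim t^{-\frac12(\frac1p-\frac1q)}$. The Mehler kernel of $e^{-tA_y}$ can, after completing the square, be written as
\[
(2\pi\sinh 2t)^{-1/2}\exp\!\Bigl(-\tfrac{\coth 2t}{2}\bigl(y-y'/\cosh 2t\bigr)^2-\tfrac{\tanh 2t}{2}(y')^2\Bigr),
\]
which for $0<t\le 1$ is pointwise dominated by a 1D Gaussian of width $\sqrt{t}$, yielding the same $L^p_y\to L^q_y$ bound; multiplying produces \eqref{hkb}. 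The technical heart of the argument is the uniform-in-$N$ verification of the Mikhlin hypotheses of Lemma \ref{Theorem-mikhlin multiplier theorem} for the rescaled symbol, where one crucially uses that on the support of the cutoff the quantities $N^{-1}$ and $(\xi^2+2k+1)^{-1/2}$ are comparable (as are $N^{-2}$ and $(\xi^2+2k+1)^{-1}$), matching exactly the decay required by the multiplier theorem.
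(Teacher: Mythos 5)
Your proposal follows essentially the same route as the paper: part (1) via the factorization $F(A/N^2)=e^{-A/N^2}\bigl[e^{A/N^2}F(A/N^2)\bigr]$, with the multiplier theorem (Lemma \ref{Theorem-mikhlin multiplier theorem}) giving uniform $L^p$-boundedness of the bracketed factor and the Mehler kernel dominated by a Gaussian of width $N^{-1}$ plus Young's inequality giving the $N^{2(\frac1p-\frac1q)}$ gain; and part (2) via the fattened cutoff $\tilde\psi$ and the multiplier theorem applied to $\mu^{\pm s/2}\tilde\psi(\mu)$. The only cosmetic difference is that you factor the heat semigroup as $e^{-tA_x}e^{-tA_y}$ and treat each one-dimensional kernel separately, while the paper bounds the two-dimensional Mehler kernel directly; both are correct (modulo the shared, harmless imprecision that $\mu^{s/2}\phi(\mu)$ is not literally in $C_c^\infty((0,\infty))$ — what is actually needed, and what holds, is that its rescaled symbol satisfies the Mikhlin hypotheses uniformly in $N$).
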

\begin{proof}
   Let $e^{-tA}$ be the heat semigroup associated to the operator $A$.
We prove (1) as follows. Using Lemma \ref{Theorem-mikhlin multiplier theorem}, we have for $1<p<\infty$ that
\begin{align*}
\Big\|A^{s/2} \phi\Big(\frac{A}{N^2}\Big)e^{\frac{A}{N^2}}e^{-\frac{A}{N^2}}f \Big\|_{L^q(\R^2)} \leq C N^s \|\lambda^{s/2} \phi(\lambda) e^{\lambda}\|_{C^2(0,2)} \|e^{-\frac{A}{N^2}}f \|_{L^q(\R^2)}.
\end{align*}
By Mehler's formula, the Schwartz kernel of $e^{-tA}$ is given by
\begin{align*}
K(t,z,z') = 2^{-\frac32}\pi^{-1}t^{-\frac12}(\sinh 2t)^{-\frac12}e^{-\frac{(x - x')^2}{4t} + \frac14(2\coth 2t - \tanh t)(y - y')^2 + \frac14\tanh t(y + y')^2}.
\end{align*}
Since $\sinh(2N^{-2})\leq C2N^{-2}$, $\coth(2N^{-2})\leq C\frac12N^2$, we have
\begin{align*}
\begin{aligned}
\big|K(N^{-2},z,z') \big| \leq  & ~C N^2 e^{-\big( \frac14N^2(x - x')^2 + \frac12\coth(2N^{-2})(y - y')^2 + \frac14\tanh(N^{-2})(y + y')^2 \big)}\\
\leq & ~ C N^2 e^{-\big( \frac14N^2(x - x')^2 + \frac1{4}N^2(y - y')^2 \big)}.
\end{aligned}
\end{align*}
Therefore, we have
\begin{align*}
\begin{aligned}
 \big\|e^{-\frac{A}{N^2}}f \big\|_{L^q(\R^2)} &\leq CN^2\Big\| \int_{\R^2}e^{-\frac14N^2(z-z')^2}f(z')\dd z' \Big\|_{L^q(\R^2)}\\
&\leq  CN^2\big\| e^{-\frac14N^2z^2} \big\|_{L^r(\R^2)}\|f\|_{L^p(\R^2)} \leq CN^{2(\frac1p-\frac1q)}\|f\|_{L^p(\R^2)},
\end{aligned}
\end{align*}
where we used Young's inequality in the last step.

To prove $(2)$, we only need to prove the first inequality as the second inequality is a direct consequence of Lemma \ref{Theorem-mikhlin multiplier theorem}. Let $\tilde\psi \in C_c^\infty(\mathbb R)$  satisfy $\tilde\psi \psi=\psi$. Then
using \eqref{Bernstein-1} and Lemma \ref{Theorem-mikhlin multiplier theorem} again,
\begin{align*}
    \|\Delta_N u\|_{L^p(\R^2)}&=\Big\|A^{-s/2}\tilde\psi\Big(\frac{A}{N^2}\Big ) A^{s/2}\Delta_N u\Big \|_{L^p(\R^2)}\\
    & \lesssim N^{-s} \|A^{s/2}\Delta_N u\|_{L^p(\R^2)}\\
    &\lesssim N^{-(s+(\frac2p-\frac2q))} \|A^{s/2}\Delta_Nu\|_{L^q(\R^2)}.
\end{align*}
This finishes the proof.
\end{proof}

\begin{remark} The restriction on $p$ and $q$ ($p, q \neq 1, \infty $) in Lemma \ref{Lemma-Bernstein estimate} comes from the restriction of the exponents in Lemma \ref{Theorem-mikhlin multiplier theorem}. We believe the inequalities are valid for $1\leq p\leq q\leq \infty$, but the lemma suffices for our needs.
\end{remark}

Next, for any integer $\lambda \in \mathbb N$,  we denote the functional of  the indicator function of $[\lambda, \lambda+1)$ associated to $A$ as
$\mathbf{1}_{\lambda}$, that is
\begin{align*}
    \mathbf{1}_{\lambda} f(x,y) = \sum_{k=0}^\infty\int_\R e^{ix\xi}\textbf{1}_{ [\lambda, \lambda+1)}(\sqrt{\xi^2+2k+1})\mathcal{F}_{x\to \xi}P_ku(\xi,y)\dd \xi.
\end{align*}
Our next lemma shows that functions spectrally localized in different regions are almost orthogonal.

\begin{lemma}\label{lemma of almost orthogonality}

 Let $m\in \mathbb N$. If $C_0$ is sufficiently large, then for any $\nu>0$,  there exists $C_\nu >0$
  such that for $\lambda_{j}$($j=1,2,3$) satisfies $C_0\lambda_{j}\leq  \lambda_{0}$, then
  \begin{align}\label{product estimate}
      \left|\int_{\R^2} \mathbf{1}_{\lambda_0} f_0 \mathbf{1}_{\lambda_1}f_1 \mathbf{1}_{\lambda_1}f_2  \mathbf{1}_{\lambda_3}f_3 \dd z\right|\leq C_{\nu} \lambda_0^{-\nu } \prod_{j=0}^3\|f_j\|_{L^2(\R^2)}.
  \end{align}
\end{lemma}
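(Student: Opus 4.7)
The plan is to exploit the tensor structure $A=-\partial_x^2+(-\partial_y^2+y^2)$ by expanding each factor in the joint Fourier--Hermite basis and then invoking Lemma \ref{Lemma-BTT-lemma 7.5}. Writing
\[
\mathbf{1}_{\lambda_j} f_j(x,y) = \sum_{k_j \geq 0} g_{j,k_j}(x)\, h_{k_j}(y),
\]
where $\widehat{g_{j,k_j}}(\xi)$ is supported in $\{\xi:\sqrt{\xi^2+2k_j+1}\in[\lambda_j,\lambda_j+1)\}$ and $\sum_{k_j}\|g_{j,k_j}\|_{L^2(\R)}^2 \leq \|f_j\|_{L^2(\R^2)}^2$ by Parseval, Fubini yields
\[
\int_{\R^2}\prod_{j=0}^3 \mathbf{1}_{\lambda_j}f_j\,dz \;=\; \sum_{k_0,k_1,k_2,k_3} J_x\cdot I_H,
\]
with $J_x=\int_{\R}\prod_j g_{j,k_j}\,dx$ and $I_H=\int_{\R}\prod_j h_{k_j}\,dy$.

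The decisive step is to show that the $x$-frequency constraint forces $k_0$ to be close to $\lambda_0^2/2$, making the Hermite estimate of Lemma \ref{Lemma-BTT-lemma 7.5} applicable. By Plancherel in $x$, $J_x \neq 0$ only when there exist $\xi_j$ in the respective Fourier supports with $\xi_0+\xi_1+\xi_2+\xi_3=0$. Since $|\xi_j|\leq\lambda_j+1$ for $j=1,2,3$ and $C_0\lambda_j\leq\lambda_0$, this gives $|\xi_0|\leq 4\lambda_0/C_0$; combined with $\xi_0^2+2k_0+1\geq\lambda_0^2$ it yields $\sqrt{2k_0+1}\geq\lambda_0\sqrt{1-16/C_0^2}\geq\lambda_0/2$ for $C_0$ large. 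Setting $N_j=\sqrt{2k_j+1}$, we thus have $N_0\geq\lambda_0/2$ while $N_j\leq 2\lambda_0/C_0$ for $j=1,2,3$, and in particular $N_0/\max(N_1,N_2,N_3)\geq C_0/4$. Choosing $\delta\sim\log C_0/\log\lambda_0$ makes the separation $N_0\geq(\max N_j)^{1+\delta}$ hold, so Lemma \ref{Lemma-BTT-lemma 7.5} gives $|I_H|\leq C_K(1+N_0)^{-K}\leq C_K\lambda_0^{-K}$ for any $K\in\N$.

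To close the argument, I would bound $\sum|J_x|$ polynomially in $\lambda_0$: by H\"older and the one-dimensional Bernstein inequality $\|g_{j,k_j}\|_{L^6(\R)}\lesssim(\lambda_j+1)^{1/3}\|g_{j,k_j}\|_{L^2}$,
\[
|J_x| \leq \|g_{0,k_0}\|_{L^2}\prod_{j=1}^3(\lambda_j+1)^{1/3}\|g_{j,k_j}\|_{L^2}.
\]
Summing over each $k_j$ with Cauchy--Schwarz and the mode count $\#\{k_j\}\lesssim\lambda_j^2$ gives $\sum|J_x|\lesssim\lambda_0^{P}\prod_{j=0}^3\|f_j\|_{L^2}$ for some universal exponent $P$ (one may take $P=5$), after which $K\geq P+\nu$ finishes the proof. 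The principal technical obstacle will be controlling BTT's constant $C_K$ as $\delta\to 0$; a careful inspection of the proof of Lemma \ref{Lemma-BTT-lemma 7.5} should give at worst polynomial growth in $1/\delta\sim\log\lambda_0$, which is harmless after choosing $K$ sufficiently large.
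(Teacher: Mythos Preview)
Your argument is the paper's proof, lightly repackaged: the paper splits into the case $|\xi_0|\gtrsim\lambda_0$ (where the $x$-integral vanishes by Fourier-support disjointness---precisely your observation that $J_x\neq 0$ forces $|\xi_0|$ small) and the case $2k_0+1\gtrsim\lambda_0^2$ (where Lemma~\ref{Lemma-BTT-lemma 7.5} supplies the Hermite decay), then closes with H\"older in $x$ and Bernstein. Your contrapositive formulation ($J_x\neq 0\Rightarrow k_0$ large) merges these two cases but is the same content; the $L^2\times (L^6)^3$ bound on $J_x$ plays the role of the paper's $L^4_xL^2_y$ step.

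The concern you flag about the $\delta$-dependence of $C_K$ in Lemma~\ref{Lemma-BTT-lemma 7.5} is real and applies equally to the paper's own proof: the hypothesis $C_0\lambda_j\leq\lambda_0$ yields only a multiplicative gap $N_0\gtrsim C_0 N_j$, not the power gap $N_0\geq N_1^{1+\delta}$ for any fixed $\delta$, and the paper invokes Lemma~\ref{Lemma-BTT-lemma 7.5} without comment on this point. As you suggest, inspecting the proof in \cite{BTT13} resolves it---the Hermite-function decay past the turning point is exponential in a positive power of $N_0$ once $N_0\geq 2N_1$, giving arbitrary polynomial decay with constants independent of $\delta$.
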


\begin{proof}
By definition, the spectrum of $\mathbf{1}_{\lambda_0} f_0 $ is localized near $\lambda_0^2$. To be precise,  $\xi_0^2+2k_0+1\in [\lambda_0^2, (\lambda_0+1)^2)$.  Then either $\xi_0^2 \in [\lambda_0^2/{2}, 2 \lambda_0^2] $ or $2k_0 +1  \in [\lambda_0^2/{2}, 2 \lambda_0^2]$.

 In the first case, since $\xi_j \leq \lambda_j $ for $j=1, 2, 3$, we can choose $C_0$ large enough such that $\sum_{j=1}^3 \lambda_j  < \frac{\lambda_0}{2}<|\xi_0|$. Thus, the Fourier support in $x$ direction of  $\widehat{\mathbf{1}_{\lambda_1} f_1} *\widehat{\mathbf{1}_{\lambda_2} f_2}*\widehat{\mathbf{1}_{\lambda_3} f_3}(\xi, y) \subset \{\xi: |\xi | <|\xi_0|\} $.  Hence, for any $y \in \mathbb R$,
\begin{align*}
    \int_{\mathbb R}\mathbf{1}_{\lambda_0} f_0 \mathbf{1}_{\lambda_1}f_1 \mathbf{1}_{\lambda_1}f_2  \mathbf{1}_{\lambda_3}f_3  d x =\int_{\R}  \widehat{\mathbf{1}_{\lambda_0} f_0}(\xi, y) \widehat{\mathbf{1}_{\lambda_1} f_1} *\widehat{\mathbf{1}_{\lambda_2} f_2}*\widehat{\mathbf{1}_{\lambda_3 }f_3}(\xi, y) d\xi =0.
\end{align*}
Therefore,
\begin{align*}
     \int_{\R^2} \mathbf{1}_{\lambda_0} f_0 \mathbf{1}_{\lambda_1}f_1 \mathbf{1}_{\lambda_1}f_2  \mathbf{1}_{\lambda_3}f_3  \dd z=0.
\end{align*}

 In the second case, if $2k_0  +1 \in [\lambda_0^2/{2}, 2 \lambda_0^2]$, then we have $\sum_{k=1}^3 2k_i+3 \leq  \frac{\lambda_0^2}{2}<2k_0  +1 $. Then by Lemma \ref{Lemma-BTT-lemma 7.5}, for any $y\in \mathbb R$, we have
 \begin{align*}
     \left|\int_{\mathbb R}\mathbf{1}_{\lambda_0} f_0 \mathbf{1}_{\lambda_1}f_1 \mathbf{1}_{\lambda_1}f_2  \mathbf{1}_{\lambda_3}f_3  \dd y\right|\leq C_\nu  \lambda_0^{-\nu} \prod_{i=0}^3\|\mathbf{1}_{\lambda_i} f_i \|_{L_y^2(\mathbb R)}
 \end{align*}
 Hence,
 \begin{align*}
      \left|\int_{\mathbb R^2} \mathbf{1}_{\lambda_0} f_0 \mathbf{1}_{\lambda_1}f_1 \mathbf{1}_{\lambda_1}f_2  \mathbf{1}_{\lambda_3}f_3  \dd z \right|& \leq C_\nu  \lambda_0^{-\nu }\int_{\R}  \prod_{i=0}^3\|\mathbf{1}_{\lambda_i} f_i \|_{L^2_y(\mathbb R)} \dd x  \\
     & \leq C_\nu  \lambda_0^{-\nu}  \prod_{i=0}^3\|\mathbf{1}_{\lambda_i} f_i \|_{L^4_xL_y^2(\R\times\R)}\\
     &\leq C_\nu \lambda_0^{-\nu +2}  \prod_{i=0}^3\|\mathbf{1}_{\lambda_i} f_i \|_{L^2_xL_y^2(\R\times\R)},
 \end{align*}
 where the last inequality follows from the Bernstein inequality.
 Thus, we have finished the proof.
\end{proof}


%

\subsection{The Bourgain space $X^{s,b}$ adapted to $A$}\label{subsection of Bourgain space}
Let $e^{itA}$ be the solution operator for the following linear Schr\"odinger equation
\begin{equation}
\begin{cases}\label{linear equation}
i \partial_t u(t,x,y) + Au(t,x,y) = 0,\quad (t,x,y)\in\R^3,\\
u(0,x,y) = \varphi(x,y).
\end{cases}
\end{equation}
\begin{definition} Let $s\in \mathbb R$. The Bourgain space $X^{s,b}(\mathbb R \times \mathbb R^2)$ is the completion of $C_0^\infty(\mathbb R_t; \H^s(\R^2))$ in the norm
 \begin{align*}
\|u\|_{X^{s,b}(\R\times\R^2)}^2 = & ~ \sum_{k=0}^\infty \big\| \langle\xi^2+2k+1\rangle^{s/2} \langle\tau+\xi^2+(2k+1)\rangle^b\mathcal{F}_{t,x}P_ku(\tau,\xi,y) \big\|_{L^2(\mathbb R_\tau; L^2(\mathbb R^2 ))}^2\\
= & ~ \|e^{itA}u(t,\cdot)\|_{H_t^b(\R; \H^s(\R^2))}^2,
\end{align*}
where $\mathcal{F}_{t,x}$ denotes the Fourier transform with respect to $t$ and  $x$.
\end{definition}
For $0 < T \leq 1$, we denote by $X^{s,b}_T(\R^2)$ the subspace of functions $u\in X^{s,b}(\R\times\R^2)$ such that
\begin{equation}
\|u\|_{X^{s,b}_T(\R^2)} = \inf\big\{ \|\tilde{u}\|_{X^{s,b}(\R\times\R^2)},~ \tilde{u}|_{(-T,T)\times\R^2} = u \big\} < \infty.
\end{equation}
For the Bourgain spaces, we have the following basic properties.
\begin{proposition}\label{Prop-2.2}
\begin{enumerate}
  \item For $s_1 \leq s_2, b_1 \leq b_2$, $X^{s_2,b_2}(\R\times\R^2) \hookrightarrow X^{s_1,b_1}(\R\times\R^2)$.
\item For $b>\frac12$, $X^{s,b}(\R\times\R^2) \hookrightarrow C(\R; \H^s(\R^2))$, and $X^{s,b}_T(\R^2) \hookrightarrow C((-T,T); \H^s(\R^2))$.
\item  For any $\delta\in \mathbb R$, we have
\begin{align*}
  N^{\delta} \|\Delta_N u(t,x,y)\|_{X^{s,b}(\R\times\R^2)} \sim \|\Delta_N u(t,x,y)\|_{X^{s+\delta,b}(\R\times\R^2)}.
\end{align*}
  \item For every $b>\frac{1}{4}$, there exists $C>0$ such that $u\in X^{0,b}(\R\times\R^2)$ satisfying
\begin{align*}
\|u\|_{L^{4}(\R;L^2(\R^2))} \leq C\|u\|_{X^{0,b}(\R\times\R^2)}.
\end{align*}
\end{enumerate}
\end{proposition}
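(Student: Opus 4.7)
My plan is as follows. Parts (1) and (3) are immediate from the Plancherel--Hermite representation
\begin{equation*}
\|u\|_{X^{s,b}}^2=\int_\R\sum_{k=0}^\infty\langle\xi^2+2k+1\rangle^s\langle\tau+\xi^2+(2k+1)\rangle^{2b}|\widetilde u(\tau,\xi,k)|^2\,d\tau\,d\xi,
\end{equation*}
where $\widetilde u$ denotes the $(t,x)$-Fourier transform composed with Hermite expansion in $y$: for (1) both weights are monotone in their indices, and for (3) the spectral support $\xi^2+2k+1\sim N^2$ of $\widetilde{\Delta_N u}$ gives $\langle\xi^2+2k+1\rangle^{(s+\delta)/2}\sim N^\delta\langle\xi^2+2k+1\rangle^{s/2}$. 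Property (2) follows from the isometry $\|u\|_{X^{s,b}}=\|e^{itA}u\|_{H^b_t\H^s}$ together with the one-dimensional Sobolev embedding $H^b(\R)\hookrightarrow C_b(\R)$ for $b>1/2$: this puts $e^{itA}u\in C(\R;\H^s)$, and unitarity of $e^{-itA}$ on $\H^s$ gives $u\in C(\R;\H^s)$; the local version is recovered by a near-minimal extension.

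The main content is (4), where the improvement over the standard $b>1/2$ threshold is available because the target spatial norm is $L^2$ and is obtained via a Hausdorff--Young argument in time. Normalize by setting
\begin{equation*}
\widetilde u(\tau,\xi,k)=\langle\tau+\xi^2+(2k+1)\rangle^{-b}w(\tau,\xi,k),\qquad\|w\|_{L^2_{\tau,\xi}\ell^2_k}=\|u\|_{X^{0,b}}.
\end{equation*}
Taking the inverse $\tau$-Fourier transform and substituting $\sigma=\tau+\xi^2+(2k+1)$, the partial transform $\widehat u(t,\xi,k)$ (Fourier in $x$, Hermite in $y$) satisfies
\begin{equation*}
|\widehat u(t,\xi,k)|=\Bigl|\int_\R e^{it\sigma}\langle\sigma\rangle^{-b}\widetilde w(\sigma,\xi,k)\,d\sigma\Bigr|,
\end{equation*}
where $\widetilde w(\sigma,\xi,k)=w(\sigma-(\xi^2+2k+1),\xi,k)$ has the same $L^2_\sigma$ norm as $w$.

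Since $\|u(t)\|_{L^2_{x,y}}^2=c\sum_k\int|\widehat u(t,\xi,k)|^2\,d\xi$ by Plancherel and Hermite orthonormality, Minkowski applied to the squared integrand in $L^2_t$ gives
\begin{equation*}
\|u\|_{L^4_tL^2_{x,y}}^2=\bigl\|\|u(t)\|_{L^2_{x,y}}^2\bigr\|_{L^2_t}\leq c\sum_k\int_\R\|\widehat u(\cdot,\xi,k)\|_{L^4_t}^2\,d\xi,
\end{equation*}
and for each $(\xi,k)$ Hausdorff--Young $(L^{4/3}_\sigma\to L^4_t)$ followed by H\"older yields
\begin{equation*}
\|\widehat u(\cdot,\xi,k)\|_{L^4_t}\lesssim\|\langle\sigma\rangle^{-b}\|_{L^4_\sigma}\|\widetilde w(\cdot,\xi,k)\|_{L^2_\sigma},
\end{equation*}
with the first factor finite precisely when $4b>1$. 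Summing over $k$ and integrating in $\xi$ restores $\|w\|_{L^2}^2=\|u\|_{X^{0,b}}^2$. The one step requiring care is the Minkowski rearrangement: it must be applied to $\|u(t)\|_{L^2_{x,y}}^2$ in $L^2_t$ rather than directly to $\|u(t)\|_{L^2_{x,y}}$ in $L^4_t$, where the inequality would run the wrong way. Beyond this bookkeeping point I anticipate no obstacles.
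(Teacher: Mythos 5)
Your proposal is correct, and it fills in exactly the standard details that the paper omits (the authors simply write that the proofs are ``simple and standard''): monotonicity of the weights for (1), spectral localization for (3), $H^b(\R)\hookrightarrow C_b(\R)$ plus unitarity of the flow for (2), and the usual Hausdorff--Young/H\"older argument for (4), with $\|\langle\sigma\rangle^{-b}\|_{L^4_\sigma}<\infty$ precisely encoding the threshold $b>\tfrac14$. Your cautionary remark about Minkowski is fine but not essential: the direct form $\|u\|_{L^4_t L^2_{\xi,k}}\leq\|\widehat u\|_{L^2_{\xi,k}L^4_t}$ also holds since $4\geq 2$, so either ordering of the rearrangement closes the argument.
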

\begin{proof} The proofs are simple and standard, hence we omit the details.
\end{proof}

%
%
\subsection[Local Strichartz estimates]{Local Strichartz estimates (See \cite{AC20})}
For the solution to the linear equation $e^{itA}\varphi$,
the following local Strichartz estimate is valid:
\begin{equation}\label{eq-local Strichartz estimate}
\big\| e^{itA}\varphi \big\|_{L^q(I,L^r(\R^2))} \leq C(I)\|\varphi\|_{L^2(\R^2)},\quad \frac2q = 2\Big( \frac12-\frac1r \Big), \quad 2<q\leq\infty, \quad 2\leq r<\infty,
\end{equation}
where $C(I)$ is a constant depending on $I$.

\section{Local well-posedness in Bourgain spaces}
\label{section3}
In this section, we give the proof of the bilinear estimates \eqref{prop-1.1} and establish a trilinear estimate. Using these two ingredients, we give the local well-posedness for the equation \eqref{equation}.
\subsection{Bilinear estimates} In this subsection, we are going to prove Proposition
\ref{proposition-Poiret2.3.13}. As a first step, we give the following bilinear estimate which is slightly stronger than we need.
\begin{lemma}\label{le-improved bilinear estimates} Let $1 \leq M \leq N$ be dyadic integers, for $T\in(0,\infty)$ there exists $C_T$ such that
   \begin{equation}\label{eq-A^1 estimate}
\int_0^T\|e^{itA}\Delta_N\varphi_1 e^{itA}\Delta_M\varphi_2\|_{\H^1(\mathbb R^2)}^2\dd t \leq C_TMN\|\Delta_N\varphi_1\|_{L^2(\R^2)}^2\|\Delta_M\varphi_2\|_{L^2(\R^2)}^2.
\end{equation}
\end{lemma}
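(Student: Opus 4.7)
The plan is to reduce this $\H^1$-level bilinear inequality to an $L^2$-level bilinear estimate for free Schr\"odinger evolutions, by absorbing every derivative and every $y$-weight into the data. Writing $u=e^{itA}\Delta_N\varphi_1$ and $v=e^{itA}\Delta_M\varphi_2$, the defining formula $\|uv\|_{\H^1}^2=\int(|\nabla(uv)|^2+y^2|uv|^2)\,dxdy$, together with Leibniz and the pointwise identity $y^2|uv|^2=|yu|^2|v|^2=|u|^2|yv|^2$, yields
\[\|uv\|_{\H^1}^2\lesssim \sum_{D\in\{\partial_x,\partial_y,y\}}\bigl(\|(Du)v\|_{L^2(\R^2)}^2+\|u(Dv)\|_{L^2(\R^2)}^2\bigr),\]
and each of the six summands, after integration in $t\in(0,T)$, will be treated as a bilinear quantity for two linear propagations.

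For $\partial_x$, which commutes with $A$ and with $\Delta_N$, one writes $\partial_xu=e^{itA}\Delta_N(\partial_x\varphi_1)$ with $\|\Delta_N\partial_x\varphi_1\|_{L^2}\lesssim N\|\Delta_N\varphi_1\|_{L^2}$ by Lemma~\ref{Lemma-Bernstein estimate}. For $\partial_y$ and $y$, which do not commute with $A$, I would introduce the raising and lowering operators $a=(y+\partial_y)/\sqrt2$, $a^*=(y-\partial_y)/\sqrt2$, whose commutators $[A,a]=-2a$ and $[A,a^*]=2a^*$ yield the exact intertwinings $ae^{itA}=e^{2it}e^{itA}a$ and $a^*e^{itA}=e^{-2it}e^{itA}a^*$. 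Combined with $\partial_y=(a-a^*)/\sqrt2$ and $y=(a+a^*)/\sqrt2$, this expresses both $\partial_yu$ and $yu$ as modulated superpositions of $e^{itA}(a\Delta_N\varphi_1)$ and $e^{itA}(a^*\Delta_N\varphi_1)$. Using $a^*a=(A_y-1)/2$ one has $\|a\Delta_N\varphi_1\|_{L^2}+\|a^*\Delta_N\varphi_1\|_{L^2}\lesssim N\|\Delta_N\varphi_1\|_{L^2}$, and since $a$, $a^*$ only shift the Hermite index by $\pm 1$ the new data remain spectrally localised at $A$-scale $N$ up to a harmless redistribution over two adjacent dyadic shells. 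The same reductions are applied on the $v$-side, with $N$ replaced by $M$.

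After these manipulations, Lemma~\ref{le-improved bilinear estimates} follows from the core $L^2$-bilinear estimate
\[\int_0^T\|e^{itA}F\cdot e^{itA}G\|_{L^2(\R^2)}^2\,dt\leq C_T\,\frac{M}{N}\,\|F\|_{L^2}^2\|G\|_{L^2}^2,\]
valid for $F,G$ spectrally localised at $A$-scales $N\geq M$: combined with the $L^2$ prefactors from the reductions, each summand carrying a derivative on the high-frequency side is bounded by $C_T(M/N)\cdot N^2\|\Delta_N\varphi_1\|_{L^2}^2\|\Delta_M\varphi_2\|_{L^2}^2=C_TMN\|\Delta_N\varphi_1\|_{L^2}^2\|\Delta_M\varphi_2\|_{L^2}^2$, while the symmetric terms produce $C_TM^3/N\|\cdot\|_{L^2}^2\|\cdot\|_{L^2}^2\leq C_TMN\|\cdot\|_{L^2}^2\|\cdot\|_{L^2}^2$ (since $M\leq N$). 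The main obstacle is proving this $L^2$-bilinear estimate, because the harmonic confinement provides no dispersion in $y$ and the full gain $M/N$ must come entirely from the $x$-direction. My plan is either to expand $F,G$ in the Hermite basis so that each $e^{itA}F$ becomes a sum of modulated 1D free Schr\"odinger evolutions in $x$ with phases $e^{it(2k+1)}$ and combine the classical 1D bilinear gain $\|e^{it\partial_x^2}f\cdot e^{it\partial_x^2}g\|_{L^2_{t,x}}^2\lesssim N^{-1}\|f\|_{L^2}^2\|g\|_{L^2}^2$ with the Hermite quadruple-product orthogonality of Lemmas~\ref{Lemma-BTT-lemma 7.5} and~\ref{lemma of almost orthogonality} to decouple the Hermite sums; or, in line with the ``Morawetz type estimate'' mentioned in the introduction, to derive an interaction Morawetz identity in the $x$-variable for $u\otimes v$, which is insensitive to the $y^2$ potential because that potential acts only in $y$.
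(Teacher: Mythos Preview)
Your reduction via ladder operators is clean and correct, but it inverts the paper's logical order: the $L^2$-bilinear estimate you take as input is precisely Proposition~\ref{Theorem-linear bilinear estimate}, which in the paper is \emph{derived from} Lemma~\ref{le-improved bilinear estimates}, not the other way around. So within the paper's framework your argument is circular, and everything hinges on your proposed independent proof of the $L^2$-bilinear bound---which you acknowledge is the main obstacle and only sketch.

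That sketch has a genuine gap. Your Plan~1 assumes the $M/N$ gain can be harvested from the 1D bilinear estimate in $x$, but $A$-localisation $\xi^2+2k+1\sim N^2$ does not force $|\xi|\sim N$: the entire $A$-frequency may sit in the Hermite index, leaving both $x$-frequencies near zero, and then the 1D bilinear gives nothing. Lemmas~\ref{Lemma-BTT-lemma 7.5} and~\ref{lemma of almost orthogonality} only help under a large gap $N_0\geq N_1^{1+\delta}$ and do not supply the sharp $M/N$ in the generic regime. Your Plan~2 (interaction Morawetz in $x$) is closer to what works, but interaction Morawetz naturally outputs gradient quantities, which is exactly why the paper runs the argument at the $\H^1$ level. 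Concretely, the paper's proof is much more direct than yours: the terms with the derivative or the weight $y$ landing on the \emph{low}-frequency factor $v_M$ are handled by a single H\"older-plus-$L^4$-Strichartz step (via \eqref{eq-local Strichartz estimate} and \eqref{eq-nabla vm}/\eqref{eq-y^2v_m eqtimate}), giving $M^2\leq MN$ with no bilinear machinery whatsoever; only the hard term $\int_0^T\|\bar v_M\,\nabla_z u_N\|_{L^2}^2\,dt$ requires a genuine idea, and for it the paper invokes an interaction Morawetz estimate \eqref{eq-1morewarz reduce estimate} together with a pointwise thickening inequality for $|v_M|^2$ to pass from the diagonal product to an off-diagonal one.
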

\begin{proof}
Denote $u_N : = e^{itA}\Delta_N\varphi_1, v_M : = e^{-itA}\Delta_M\varphi_2$ for simplicity. Notice that $\bar v_M : = e^{itA}\Delta_M\varphi_2$.

By the equivalence of Sobolev norms  \eqref{eq-equivalence}, to prove \eqref{eq-A^1 estimate},  it suffices to prove the following three inequalities:
\begin{equation}\label{eq-H^1 estimate2}
\int_0^T \int_{\R^2}\big| u_N(z)\nabla_z\bar{v}_M(z) \big|^2\dd z \dd t \leq C_TMN\|\Delta_N\varphi_1\|_{L^2(\R^2)}^2\|\Delta_M\varphi_2\|_{L^2(\R^2)}^2,
\end{equation}
\begin{equation}\label{eq-H^1 estimate1}
\int_0^T \int_{\R^2}\big| \bar{v}_M(z)\nabla_zu_N(z) \big|^2\dd z\dd t \leq C_TMN\|\Delta_N\varphi_1\|_{L^2(\R^2)}^2\|\Delta_M\varphi_2\|_{L^2(\R^2)}^2,
\end{equation}
\begin{equation}\label{eq-H^1 estimate3}
\int_0^T \int_{\R^2}\big| y u_N(z)\bar{v}_M(z) \big|^2\dd z \dd t \leq C_TMN\|\Delta_N\varphi_1\|_{L^2(\R^2)}^2\|\Delta_M\varphi_2\|_{L^2(\R^2)}^2.
\end{equation}

For \eqref{eq-H^1 estimate2},  Cauchy-Schwarz inequality implies
\begin{equation}\label{eq-H^1 estimate21}
\int_0^T \int_{\R^2}\big| u_N(z)\nabla_z\bar{v}_M(z) \big|^2\dd z \dd t \leq \|u_N\|_{L^4((0,T);L^4(\R^2))}^2\|\nabla v_M\|_{L^4((0,T);L^4(\mathbb R^2))}^2.
\end{equation}
Note that $\nabla v_M$ is a solution to the inhomogeneous equation associated with \eqref{linear equation} with nonlinear term $-2yv_M$. Hence by the local Strichartz estimate \eqref{eq-local Strichartz estimate}, we have
\begin{equation}\label{eq-nabla vm}
\begin{aligned}
\|\nabla v_M\|_{L^4((0,T);L^4(\mathbb R^2))} \leq & ~ C_T\|\nabla \Delta_M\varphi_2\|_{L^2(\mathbb R^2)} + C_T\|yv_M\|_{L^1((0,T);L^2(\mathbb R^2))}\\
\leq & ~ C_T\|\Delta_M\varphi_2\|_{\H^1(\mathbb R^2)} \leq C_TM\|\Delta_M\varphi_2\|_{L^2(\mathbb R^2)},
\end{aligned}
\end{equation}
where we used the conservation of $\H^1$ norm of the linear evolution.  Therefore, we have
\begin{align*}
\text{RHS of}\ \eqref{eq-H^1 estimate21} \leq C_TM^2\|\Delta_N\varphi_1\|_{L^2(\R^2)}^2\|\Delta_M\varphi_2\|_{L^2(\R^2)}^2 \leq C_TMN\|\Delta_N\varphi_1\|_{L^2(\R^2)}^2\|\Delta_M\varphi_2\|_{L^2(\R^2)}^2.
\end{align*}
That is, we have proved \eqref{eq-H^1 estimate2}.

Next, we prove \eqref{eq-H^1 estimate3}.
Applying H\"older's inequality to the left  hand side of \eqref{eq-H^1 estimate3}, we have
 \begin{align}\label{eq-h1 estimate3}
\int_0^T\Big( \int_{\R^2}|y|^2|v_Mu_N|^2\dd z \Big)\dd t \leq C\|yv_M\|_{L^4((0,T);L^4(\mathbb R^2))}^2\|u_N\|_{L^4((0,T);L^4(\mathbb R^2))}^2.
\end{align}
Note that  $yv_M$ is a solution to \eqref{linear equation} with nonlinear term $-\nabla v_M $. Then \eqref{eq-local Strichartz estimate} and the conservation of $\H^1$ norm of the linear evolution give
\begin{equation}\label{eq-y^2v_m eqtimate}
\begin{aligned}
\|yv_M\|_{L^4((0,T);L^4(\mathbb R^2))} \leq & ~ C\|y\Delta_M\varphi_2\|_{L^2(\mathbb R^2)} + C\|\nabla v_M\|_{L^1((0,T);L^2(\mathbb R^2))}\\
\leq & ~ \|\Delta_M\varphi_2\|_{\H^1(\mathbb R^2)} \leq C_TM\|\Delta_M\varphi_2\|_{L^2(\mathbb R^2)}.
\end{aligned}
\end{equation}
 Hence, combining  \eqref{eq-h1 estimate3} and \eqref{eq-y^2v_m eqtimate} yields \eqref{eq-H^1 estimate3}.

We now only need to show \eqref{eq-H^1 estimate1}. Using the following inequality (the proof is similar to the proof of Lemma 4.1 in \cite{PTV21}, so we omit the details):
\begin{align*}
|\rho(z)|^2 \leq C\int_{|z-z'|<\lambda^{-1}}(\lambda^{-2}\big| A\rho \big|^2 + \lambda^2 \big| \rho \big|^2)\dd z', ~\forall \rho \in C^\infty(\R^2),
\end{align*}
 we have
\begin{align*}
\begin{aligned}
\MoveEqLeft \int_0^T\Big( \int_{\R^2}\big| \bar{v}_M(z)\nabla_zu_N(z) \big|^2\dd z \Big)\dd t \\ &\leq   C\int_0^T\Big( \iint_{|z-z'|<\frac1M}M^2\big| \bar{v}_M(z')\nabla_z u_N(z) \big|^2
 + \frac{1}{M^2}\big| A\bar{v}_M(z')\nabla_z u_N(z) \big|^2\dd z\dd z' \Big)\dd t.
\end{aligned}
\end{align*}

Adapting the method of \cite{PTV21}, the following Morawetz estimate is valid for the solution to \eqref{linear equation}
 \begin{align}\label{eq-1morewarz reduce estimate}\begin{multlined}
\int_0^T\Big( \iint_{|z-z'|<\frac1M} \big| u_N(z)\nabla_z\bar{v}_M(z') + \nabla_zu_N(z)\bar{v}_M(z') \big|^2\dd z\dd z' \Big)\dd t\\
 \leq C_TM^{-1}N\|\Delta_N\varphi_1\|_{L^2(\R^2)}^2\|\Delta_M\varphi_2\|_{L^2(\R^2)}^2.
 \end{multlined}
\end{align}
In addition, note that $Av_M$ is a localized solution to \eqref{linear equation}.
By \eqref{eq-1morewarz reduce estimate}, we obtain
\begin{equation*}
\begin{multlined}
 \int_0^T \iint_{|z-z'|<\frac1M}\big| u_N(z)\nabla_y(A\bar{v}_M)(y) + \nabla_zu_N(z)A\bar{v}_M(z') \big|^2\dd z\dd z' \dd t \\
\leq  C_TNM\|\Delta_N\varphi_1\|_{L^2(\R^2)}^2\|\Delta_M\varphi_2\|_{L^2(\R^2)}^2.
\end{multlined}
\end{equation*}
Therefore,
\begin{align*}
\begin{aligned}
 \MoveEqLeft \int_0^T\Big( \int_{\R^2}\big| \bar{v}_M(z)\nabla_zu_N(z) \big|^2\dd z \Big)\dd t
\\ &\leq  C\int_0^T\Big( \iint_{|z-z'|<\frac1M}M^2 \big| u_N(z)\nabla_y\bar{v}_M(z')\big|^2 + \frac1{M^2}\big| u_N(z)\nabla_{z'}(A\bar{v}_M)(z')\big|^2\dd z\dd z' \Big)\dd t\\
& \quad + C_TNM\|\Delta_N\varphi_1\|_{L^2(\R^2)}^2\|\Delta_M\varphi_2\|_{L^2(\R^2)}^2.
\end{aligned}
\end{align*}
Using the Cauchy-Schwarz inequality, local Strichartz estimates \eqref{eq-local Strichartz estimate} and \eqref{eq-nabla vm}, we have
\begin{equation}\label{eq-v_M estimate}
\begin{aligned}
\MoveEqLeft \int_0^T\Big( \iint_{|z-z'|<\frac1M} M^2 \big| u_N(z)\nabla_y\bar{v}_M(z')\big|^2\dd z\dd z' \Big)\dd t\\
 \leq & ~M^2 \int_0^T\int_{|w|<\frac1M}\big| u_N(z)\nabla_z\bar{v}_M(w-z)\big|^2\dd z\dd w\dd t\\
\leq & ~M^2\int_{|w|<\frac1M}\|u_N\|_{L^4((0,T);L^4(\R^2))}^2\|\nabla v_M\|_{L^4((0,T);L^4(\R^2))}^2\dd w\\
\leq & ~ C_T M^2\|\Delta_N\varphi_1\|_{L^2(\R^2)}^2\|\Delta_M\varphi_2\|_{L^2(\R^2)}^2.
\end{aligned}
\end{equation}
Replacing $v_M$ by $Av_M$ in \eqref{eq-v_M estimate}, we get
\begin{align}\label{eq-Av_M estimate}
\int_0^T\!\!\Big( \iint\limits_{|z-z'|<\frac1M}\frac{1}{M^2} \big| u_N(z)\nabla_y(A\bar{v}_M)(z')\big|^2\dd z\dd z' \Big)\dd t
& \leq  C_TM^2\|\Delta_N\varphi_1\|_{L^2(\R^2)}^2\|\Delta_M\varphi_2\|_{L^2(\R^2)}^2.
\end{align}
Combining \eqref{eq-v_M estimate} and \eqref{eq-Av_M estimate} yields \eqref{eq-H^1 estimate1}. This finishes the proof of the lemma.
\end{proof}

\begin{proposition}
\label{Theorem-linear bilinear estimate}
Let $1 \leq M \leq N$ be dyadic integers. For $T\in(0,\infty)$ there exists $C_T$ such that
\begin{align*}
\big\| e^{itA}\Delta_N\varphi_1 e^{itA}\Delta_M\varphi_2 \big\|_{L^2((0,T);L^2(\R^2))}^2 \leq C_TMN^{-1}\|\Delta_N\varphi_1\|_{L^2(\R^2)}^2\|\Delta_M\varphi_2\|_{L^2(\R^2)}^2.
\end{align*}
\end{proposition}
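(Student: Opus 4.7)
The plan is to combine the $\H^1$-bilinear estimate of Lemma \ref{le-improved bilinear estimates} with a Bernstein-type argument using the approximate spectral localization of $u_Nv_M$ at $A$-frequency $\sim N$. This localization will provide the factor $N^{-2}$ needed to pass from the $\H^1$ bound $MN$ to the desired $L^2$ bound $MN^{-1}$. In the easy regime $M\ge N/C_0$ (with $C_0$ as in Lemma \ref{lemma of almost orthogonality}) one has $MN^{-1}\gtrsim 1$, so the bound follows from the local Strichartz estimate
\[
\|u_Nv_M\|_{L^2((0,T);L^2)}\le\|u_N\|_{L^4((0,T);L^4)}\|v_M\|_{L^4((0,T);L^4)}\le C_T\|\Delta_N\varphi_1\|_{L^2}\|\Delta_M\varphi_2\|_{L^2}.
\]

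In the substantive regime $M<N/C_0$, I would first establish the spectral decomposition
\[
u_Nv_M=\tilde\Delta_N(u_Nv_M)+R,\qquad \tilde\Delta_N:=\sum_{K\sim N}\Delta_K,
\]
with $R$ rapidly decaying in $N$. Using $u_N=\sum_{\lambda_1\sim N}\mathbf 1_{\lambda_1}u_N$ and $v_M=\sum_{\lambda_2\sim M}\mathbf 1_{\lambda_2}v_M$, for each $K$ outside $[cN,CN]$ one expresses $\|\Delta_K(u_Nv_M)\|_{L^2}$ via duality as a quadrilinear integral and applies Lemma \ref{lemma of almost orthogonality}: when $K\ge C_0 N$, one lets $\lambda_0=K$ dominate $\lambda_1,\lambda_2$; when $K\le N/C_0$, the index $\lambda_1\sim N$ plays the dominant role and one again obtains rapid decay. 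Summing over the remaining indices gives $\|R\|_{L^2((0,T);L^2)}\le C_{T,\nu}N^{-\nu}\|\Delta_N\varphi_1\|_{L^2}\|\Delta_M\varphi_2\|_{L^2}$ for any $\nu$.

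For the main term, the Bernstein-type inequality (Lemma \ref{Lemma-Bernstein estimate}(2)) gives pointwise in $t$
\[
\|\tilde\Delta_N(u_Nv_M)\|_{L^2}\lesssim N^{-1}\|A^{1/2}\tilde\Delta_N(u_Nv_M)\|_{L^2}\lesssim N^{-1}\|u_Nv_M\|_{\H^1},
\]
since the sum over $K\sim N$ comprises only finitely many dyadic blocks. Squaring, integrating in $t$, and invoking Lemma \ref{le-improved bilinear estimates} yields
\[
\int_0^T\|\tilde\Delta_N(u_Nv_M)\|_{L^2}^2\,dt\lesssim N^{-2}\int_0^T\|u_Nv_M\|_{\H^1}^2\,dt\le C_TMN^{-1}\|\Delta_N\varphi_1\|_{L^2}^2\|\Delta_M\varphi_2\|_{L^2}^2,
\]
which combined with the error estimate completes the proof.

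The hard part will be the localization step, in particular the low-frequency tail $K\ll N$. Although bilinear interactions can a priori produce small $x$-frequencies (for instance when two high-Hermite modes multiply), the joint constraint $\sqrt{\xi^2+2k+1}\sim\lambda_j$ pins the Hermite indices $k_1\sim\lambda_1^2/2$ and $k_2\sim\lambda_2^2/2$, and the Hermite product $h_{k_1}h_{k_2}$ is then supported near $l\in[|k_1-k_2|,k_1+k_2]$ with $|k_1-k_2|\sim\lambda_1^2\gg K^2$, preventing low-frequency leakage. Quantifying this via the Hermite integral bound underlying Lemma \ref{Lemma-BTT-lemma 7.5} and Lemma \ref{lemma of almost orthogonality}, and carefully handling the summation over the many $\mathbf 1_{\lambda_j}$-blocks without losing the rapid decay, is the main technical obstacle.
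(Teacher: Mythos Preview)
Your proposal is correct and your treatment of the main term $\tilde\Delta_N(u_Nv_M)$ is exactly what the paper does. The difference lies in how the remaining frequencies are handled, and here the paper's route is considerably simpler than yours.

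You split off $K\sim N$ and then work hard, via the fine decomposition $\Delta_K=\sum_\lambda\mathbf 1_\lambda$ and Lemma~\ref{lemma of almost orthogonality}, to show that the tails $K\ll N$ and $K\gg N$ are negligible. The paper never proves (or needs) any spectral localization of $u_Nv_M$. Instead it splits crudely into $K\le N$ and $K\ge N$ and reuses the \emph{same} Bernstein-plus-$\H^1$ trick on each piece. For $K\le N$ one has $S_N(u_Nv_M)$, and the multiplier bound $\|S_Nf\|_{L^2}\lesssim N^{-1}\|f\|_{\H^1}$ together with Lemma~\ref{le-improved bilinear estimates} already gives $C_TMN^{-1}$; this single stroke covers both your ``main term'' and your ``low-frequency tail''. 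For $K\ge N$ one uses $\|\Delta_Kf\|_{L^2}\lesssim K^{-1}\|\Delta_Kf\|_{\H^1}$, and since $\sum_{K\ge N}K^{-2}\|\Delta_K(u_Nv_M)\|_{\H^1}^2\le N^{-2}\|u_Nv_M\|_{\H^1}^2$ by Littlewood--Paley, Lemma~\ref{le-improved bilinear estimates} again yields $C_TMN^{-1}$.

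So the ``main technical obstacle'' you flag---controlling the low-frequency leakage via Hermite product asymptotics---simply does not arise: the $\H^1$ bilinear estimate is strong enough on its own, once fed through Bernstein at scale $\max(K,N)$. Your approach would work, but it imports Lemma~\ref{lemma of almost orthogonality} (which the paper reserves for the trilinear estimate) and the bookkeeping of the $\mathbf 1_\lambda$-blocks into a place where neither is needed. The separate treatment of the regime $M\ge N/C_0$ is also unnecessary, since the paper's argument is uniform in $M\le N$.
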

\begin{proof}
Denote $u_N : = e^{itA}\Delta_N\varphi_1$, $v_M : = e^{itA}\Delta_M\varphi_2$ for simplicity.
We perform a Littlewood--Paley decomposition for $v_M$ and $ u_N$, and divide the sum in $K$ into $K \leq N$ and $K \geq N$. Then
\begin{align*}
&\quad\|v_Mu_N\|_{L^2((0,T);L^2(\mathbb R^2))}^2 = \Big\|\sum_{K\in2^\N}\Delta_K(v_Mu_N)\Big\|_{L^2((0,T);L^2(\mathbb R^2))}^2\\
&\leq \Big\|\sum_{K\in2^\N, K\leq N}\Delta_K(v_Mu_N)\Big\|_{L^2((0,T);L^2(\mathbb R^2))}^2+\Big\|\sum_{K\in2^\N, K\geq  N}\Delta_K(v_Mu_N)\Big\|_{L^2((0,T);L^2(\mathbb R^2))}^2\\
&\eqqcolon I_1+I_2.
\end{align*}
\textbf{Low  frequency:} For $I_1$, using Lemma \ref{Theorem-mikhlin multiplier theorem}, we have
\begin{align*}
\quad I_1 &\leq  \| S_N(v_Mu_N)\|_{L^2(\mathbb R^2)}^2  \leq N^{-2} \| N A^{-1/2} S_N A^{1/2}(v_Mu_N) \|_{L^2(\mathbb R^2)}^2 \leq  N^{-2} \| A^{1/2}(v_Mu_N) \|_{L^2(\mathbb R^2)}^2 \\
& \leq N^{-2} \| \nabla (v_Mu_N) \|_{L^2(\mathbb R^2)}^2+ N^{-2}\| y (v_Mu_N) \|_{L^2(\mathbb R^2)}^2.
\end{align*}

Therefore, by \eqref{eq-A^1 estimate}, \eqref{eq-nabla vm} and \eqref{eq-y^2v_m eqtimate}, we obtain
\begin{align*}
\begin{aligned}
\int_0^T\big\| S_N(v_Mu_N) \big\|_{L^2(\mathbb R^2)}^2\dd t
\leq & ~C(MN^{-1} + M^2N^{-2})\|\Delta_N \varphi_1\|_{L^2(\R^2)}^2\|\Delta_M\varphi_2\|_{L^2(\R^2)}^2.
\end{aligned}
\end{align*}
\textbf{High  frequency:} For $I_2$, by the triangle inequality and  Lemma \ref{eq-A^1 estimate}, we have
\begin{align*}
\begin{aligned}
I_2 \leq \sum_{K\in2^\N}\big\|\Delta_K(v_Mu_N)\big\|_{L^2((0,T);L^2(\mathbb R^2))}^2 \leq & ~ C\sum_{K\in2^\N}(1+K)^{-2}\|\Delta_K (v_Mu_N)\|_{L^2((0,T);\H^1(\mathbb R^2))}^2\\
\leq & ~ C_TMN^{-1}\|\Delta_N\varphi_1\|_{L^2(\R^2)}^2\|\Delta_M\varphi_2\|_{L^2(\R^2)}^2.
\end{aligned}
\end{align*}
 This completes the proof.
\end{proof}

\begin{proof}[\bf{Proof of Proposition \ref{proposition-Poiret2.3.13}}]
\label{Proposition-Poiret2.3.15}

First, we claim that Lemma \ref{le-improved bilinear estimates} implies the following bilinear estimate in Bourgain space: for every $\frac12<b\leq1$ and $\delta>0$, and let $1 \leq M \leq N$ be dyadic integers. Then there exists $C>0$ such that for every $u,v\in X^{0,b}(\R\times\R^2)$ satisfying
\begin{equation}\label{eq-local bilinear estimate}
\big\| \Delta_{N}u\Delta_{M}v \big\|_{L^2([0,1];L^2(\R^2))} \leq C\big(\frac{M}{N}\big)^{\frac12-\delta}\big\| \Delta_{N}u \big\|_{X^{0,b}(\R\times\R^2)}\big\| \Delta_{M}v \big\|_{X^{0,b}(\R\times\R^2)}.
\end{equation}
To prove this, let us set $U(t) = e^{itA}\Delta_{N}u(t)$, $V(t) = e^{itA}\Delta_{M}v(t)$, then we write
\begin{align*}
\Delta_{N}u(t) = \frac{1}{2\pi}\int_\R e^{it\tau}e^{-itA}\hat{U}(\tau)\dd \tau,\quad
\Delta_{M}v(t) = \frac{1}{2\pi}\int_\R e^{it\tau}e^{-itA}\hat{V}(\tau)\dd \tau.
\end{align*}
Therefore,
\begin{align*}
\Delta_{N}u(t)\Delta_{M}v(t) = \frac{1}{4\pi^2}\int_\R\int_\R e^{it(\tau_1+\tau_2)}\big(e^{-itA}\hat{U}(\tau_1)e^{-itA}\hat{V}(\tau_2)\big)\dd \tau_1\dd \tau_2.
\end{align*}
Using the triangle inequality and Proposition \ref{Theorem-linear bilinear estimate}, we have that for every unit interval and $0<\delta\leq\frac12$, there exists $C >0$ such that for $b>1/2$
\begin{align*}
\begin{aligned}
\MoveEqLeft \big\| \Delta_{N}u\Delta_{M}v \big\|_{L^2([0,1];L^2(\mathbb R^2))} \leq C\Big\| \int_\R\int_\R e^{it(\tau_1+\tau_2)}\big(e^{-itA}\hat{U}(\tau_1)e^{-itA}\hat{V}(\tau_2)\big)\dd \tau_1\dd \tau_2 \Big\|_{L^2([0,1];L^2(\mathbb R^2))}\\
\leq & ~ C\int_\R\int_\R \Big\| e^{-itA}\hat{U}(\tau_1)e^{-itA}\hat{V}(\tau_2) \Big\|_{L^2([0,1];L^2(\mathbb R^2))}\dd \tau_1\dd \tau_2\\
\leq & ~ C\big(\frac{M}{N}\big)^{\frac12-\delta}\int_\R\int_\R \big\| \hat{U}(\tau_1) \big\|_{L^2(\R^2)}\big\| \hat{V}(\tau_2) \big\|_{L^2(\R^2)}\dd \tau_1\dd \tau_2\\
\leq  & ~ C \big(\frac{M}{N}\big)^{\frac12-\delta}\big\| \langle\tau \rangle^b\hat{U}(\tau_1) \big\|_{L^2(\R^2)}\big\| \langle\tau \rangle^b\hat{V}(\tau_2) \big\|_{L^2(\R^2)}\dd \tau_1\dd \tau_2\\
\leq & ~ C\big(\frac{M}{N}\big)^{\frac12-\delta}\big\| \Delta_{N}u \big\|_{X^{0,b}(\R\times\R^2)}\big\| \Delta_{M}v \big\|_{X^{0,b}(\R\times\R^2)}.
\end{aligned}
\end{align*} Then using a partition of unity, we have the following global in time estimate:
\begin{align}\label{prop1.1-interpolation 1}
\big\| \Delta_{N}u\Delta_{M}v \big\|_{L^2(\R;L^2(\R^2))} \leq C \big(\frac{M}{N}\big)^{\frac12-\delta}\big\| \Delta_{N}u \big\|_{X^{0,b}(\R\times\R^2)}\big\| \Delta_{M}v \big\|_{X^{0,b}(\R\times\R^2)}.
\end{align}

On the other hand,
for any $\varepsilon>0$, by H\"older's inequality and  Proposition \ref{Prop-2.2}, we have
\begin{align}\label{prop1.1-interpolation 2}
\begin{aligned}
& \big\| \Delta_{N}u\Delta_{M}v \big\|_{L^2(\R;L^2(\R^2))} \leq C \|\Delta_{N}u\|_{L^4(\R;L^2(\R^2))}\|\Delta_{M}v\|_{L^4(\R;L^\infty(\R^2))}\\
\leq & ~ C \|\Delta_{N}u\|_{X^{0,\frac14+\varepsilon}}\|\Delta_{M}v\|_{X^{1+\varepsilon,\frac14+\varepsilon}} \leq C M^{1+\varepsilon}\|\Delta_{N}u\|_{X^{0,\frac14+\varepsilon}}\|\Delta_{M}v\|_{X^{0,\frac14+\varepsilon}}.
\end{aligned}
\end{align}
Now, interpolating \eqref{prop1.1-interpolation 1} and \eqref{prop1.1-interpolation 2}, we have for any $\eta\in[0,1]$,
\begin{align*}
\begin{aligned}
& \big\| \Delta_{N}u\Delta_{M}v \big\|_{L^2(\R;L^2(\R^2))}\\
\leq & ~ CM^{\eta(1+\varepsilon)}\Big( \frac{M}{N} \Big)^{(\frac12-\delta)(1-\eta)}\big\| \Delta_{N}u \big\|_{X^{0,b(1-\eta)+\eta(\frac14+\varepsilon)}}\big\| \Delta_{M}v \big\|_{X^{0,b(1-\eta)+\eta(\frac14+\varepsilon)}}.
\end{aligned}
\end{align*}
Choosing $\delta=\frac{\varepsilon}{2}$ and $\eta=\frac{\varepsilon}{4}$, then
\begin{align*}
b(1-\eta)+\eta\Big(\frac14+\varepsilon\Big) = b-\frac{b\varepsilon}{4}+\frac{\varepsilon}{4}\Big(\frac14+\varepsilon\Big)\leq b-\frac{\varepsilon}{8}+\frac{\varepsilon}{16}+ \frac{\varepsilon^2}{4}\leq b-\frac{\varepsilon}{17}<\frac12,
\end{align*}
we complete the proof of the proposition.
\end{proof}

\subsection{Trilinear estimate}
In this subsection, we are going to prove a trilinear estimate in Bourgain spaces using the bilinear estimates and the almost-orthogonality for the products of functions  spectrally localized to different regions.

\begin{proposition}\label{prop-trilinear estimate}
Let $\varepsilon > 0$. There exists $C>0, b$ and $b'$ satisfying  $0<b'<1/2<b, \ b+b'<1,$ such that for any $s \geq \varepsilon$ and for every triple $(u_j), j=1, 2, 3$  in $X^{s,b}$, we have
\begin{equation}\label{trilinear estimate}
\big\|u_1  u_2\overline u_3 \big\|_{X^{s,-b'}_T(\R^2)} \leq C \|u_1\|_{X^{s,b}_T(\R^2)}\|u_2\|_{X^{\varepsilon,b}_T(\R^2)}\|u_3\|_{X^{\varepsilon,b}_T(\R^2)}.
\end{equation}
\end{proposition}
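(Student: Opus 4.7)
The plan is to argue by duality and a dyadic decomposition. Since $(X^{s,-b'})^* = X^{-s,b'}$ under the standard spacetime pairing, it suffices to prove
\begin{equation*}
\left|\iint u_1 u_2 \overline{u_3}\,\overline{u_0}\,\dd z\,\dd t\right|\leq C\,\|u_0\|_{X^{-s,b'}}\|u_1\|_{X^{s,b}}\|u_2\|_{X^{\varepsilon,b}}\|u_3\|_{X^{\varepsilon,b}}
\end{equation*}
for every test function $u_0$. I would choose $b'$ equal to the exponent supplied by Proposition \ref{proposition-Poiret2.3.13} (in particular $b'<\tfrac12$) and then pick $b\in(\tfrac12,\,1-b')$, a nonempty interval. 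Decomposing $u_j=\sum_{N_j\in2^\N}\Delta_{N_j}u_j$ reduces the left-hand side to a sum over dyadic quadruples $(N_0,N_1,N_2,N_3)$ of the localized integrals
\begin{equation*}
I(N_0,N_1,N_2,N_3):=\iint\Delta_{N_0}\overline{u_0}\cdot\Delta_{N_1}u_1\cdot\Delta_{N_2}u_2\cdot\Delta_{N_3}\overline{u_3}\,\dd z\,\dd t,
\end{equation*}
and by the symmetry $u_2\leftrightarrow u_3$ we may assume $N_2\geq N_3$.

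In the main case $N_0\leq C_0\max(N_1,N_2,N_3)$ (for a large fixed $C_0$), I apply Cauchy--Schwarz in $(t,z)$ to bound $|I|$ by a product of two $L^2_{t,z}$-norms of bilinear products, and invoke Proposition \ref{proposition-Poiret2.3.13} on each factor. Each bilinear factor is controlled by the gain $(N\wedge M)^{\delta}\bigl((N\wedge M)/(N\vee M)\bigr)^{1/2-\delta}$ times two $X^{0,b'}$-norms; the latter are in turn dominated by the original $X^{\cdot,b}$-norms via $b>b'$ and monotonicity. After normalizing the four input norms to unity and writing $\|\Delta_{N_j}u_j\|_{X^{0,b'}}=N_j^{-s_j}\alpha_{N_j}^{(j)}$ with $(s_0,s_1,s_2,s_3)=(-s,s,\varepsilon,\varepsilon)$ and $\sum_{N_j}\bigl(\alpha_{N_j}^{(j)}\bigr)^2=1$, I select the pairing according to which of $N_1,N_2,N_3$ is largest: typically $\Delta_{N_0}u_0$ pairs with the largest input, leaving the remaining two paired together. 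The aim is that the $(\min/\max)^{1/2-\delta}$ factors combine with $N_0^s\prod_jN_j^{-s_j}$ to produce strictly negative exponents in every dyadic variable, so that Cauchy--Schwarz on the $\alpha$-sequences closes the estimate.

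For the tail $N_0>C_0\max(N_1,N_2,N_3)$, I refine each $\Delta_{N_j}=\sum_{\lambda\in[N_j/2,\,\sqrt{2}\,N_j)}\mathbf{1}_\lambda\Delta_{N_j}$ and apply Lemma \ref{lemma of almost orthogonality} fiberwise in $t$ (valid once $C_0$ is large enough that the frequency-separation hypothesis of the lemma is met), then integrate in $t$ using the embedding $X^{0,b}\hookrightarrow L^\infty_tL^2_z$ from Proposition \ref{Prop-2.2} (which holds because $b>\tfrac12$). This yields an arbitrary negative power $\lambda_0^{-K}$ per quadruple of unit boxes, which easily absorbs both the Sobolev weight $N_0^s$ and the polynomial loss coming from the $\sim N_0 N_1 N_2 N_3$ boxes to sum over; consequently the tail is summable.

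The hard part will be the sub-case bookkeeping in the main case: since only $u_1$ carries the heavy Sobolev weight, the pairing that works when $N_1$ is the largest of $\{N_1,N_2,N_3\}$ differs from the one needed when $N_1$ is the smallest. In the latter regime the output weight $N_0^s$ must be transferred onto $N_2^s=N_2^{s-\varepsilon}\cdot N_2^\varepsilon$; the hypothesis $s\geq\varepsilon$ is precisely what keeps the leftover factor $N_2^{s-\varepsilon}$ (non-negative exponent) compensable by the bilinear $(\min/\max)^{1/2-\delta}$ gain. Beyond this careful chase of exponents, I do not anticipate a conceptual obstacle.
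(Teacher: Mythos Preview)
Your approach is essentially the paper's: duality, Littlewood--Paley decomposition of all four factors, a split into a main case (comparable frequencies) handled by Cauchy--Schwarz plus the bilinear estimate of Proposition~\ref{proposition-Poiret2.3.13}, and a tail case (dominant output frequency) handled by the almost-orthogonality Lemma~\ref{lemma of almost orthogonality}. The paper orders $N_1\geq N_2\geq N_3$ and splits at $N_0\gtrless N_1^{1+\delta}$ rather than at a fixed multiple $N_0\gtrless C_0\max N_j$, but either threshold works, and your explicit acknowledgment that the pairing must change when $N_1$ is not the largest (because only $u_1$ carries the heavy weight $s$) is in fact more careful than the paper's passing remark that ``the other possible orders of magnitude of $N_1,N_2,N_3$ are similar.''

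One small correction is needed in your tail case: the embedding $X^{0,b}\hookrightarrow L^\infty_tL^2_z$ requires $b>\tfrac12$, but the dual factor $u_0$ lies only in $X^{-s,b'}$ with $b'<\tfrac12$, so you cannot invoke it for $u_0$. The paper handles this by using instead the embedding $X^{0,b'}\hookrightarrow L^4_tL^2_z$ from Proposition~\ref{Prop-2.2}(4), valid for any exponent larger than $\tfrac14$ and hence for both $b$ and $b'$; H\"older in $t$ over the four $L^4_t$ factors then closes the estimate. With that replacement your argument goes through.
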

\begin{proof}
    By duality, to prove \eqref{trilinear estimate} is equivalent to prove
\begin{equation}
\Big| \int_0^T\int_{\R^2}u_1u_2\bar{u}_3\bar{u}_0\dd z\dd t \Big| \leq C \|u_0\|_{X^{-s,b'}_T(\R^2)} \|u_1\|_{X^{s,b}_T(\R^2)}\|u_2\|_{X^{\varepsilon,b}_T(\R^2)}\|u_3\|_{X^{\varepsilon,b}_T(\R^2)}.
\end{equation}
 Then performing a Littlewood--Paley decomposition on each function $u_i$ and using a symmetry argument, it suffices to estimate
\begin{align*}
\Big| \sum_{N_1\geq N_2\geq N_3}\sum_{N_0}\int_0^T\int_{\R^2}\Delta_{N_0}\bar{u}_0\Delta_{N_1}u_1\Delta_{N_2}u_2\Delta_{N_3}\bar{u}_3\dd z\dd t \Big|,
\end{align*}
where the summation is meant over dyadic values of $N_1, N_2, N_3$ and $N_0$. The other possible orders of magnitude of $N_1, N_2, N_3$ are similar.

\textbf{Case 1:} $N_0\geq N_1^{1+\delta}$ for some $\delta>0$.
Since \(\Delta_N=\sum_{k \in [\frac{N^2}{2}, 2N^2]} \textbf{1}_{k}\Delta_N,\)
\begin{align*}
 \MoveEqLeft \Delta_{N_0}\bar{u}_0\Delta_{N_1}u_1\Delta_{N_2}u_2\Delta_{N_3}\bar{u}_3
 =  \sum_{i=0}^3\sum_{k_i \in [\frac{N_i^2}{2}, 2N_i^2 ]} \textbf{1}_{k_0}\Delta_{N_0}\bar{u}_0 \textbf{1}_{k_1} \Delta_{N_1}u_1 \textbf{1}_{k_2} \Delta_{N_1} u_2 \textbf{1}_{k_3}  \Delta_{N_1}\bar{u}_{k_3}.
\end{align*}
Thus by Lemma \ref{lemma of almost orthogonality}, we have
\begin{align*}
   & \left |\int_{\R^2}\Delta_{N_0}\bar{u}_0\Delta_{N_1}u_1\Delta_{N_2}u_2\Delta_{N_3}\bar{u}_3 d z  \right |\\
   &\leq \prod_{i=0}^3 N_i^2 N_0^{-\nu }\|\Delta_{N_0}\bar{u}_0(t, \cdot) \|_{L^2(\mathbb R^2)}\|\Delta_{N_1}{u}_1(t, \cdot) \|_{L^2(\mathbb R^2)}\|\Delta_{N_2}{u}_2(t, \cdot) \|_{L^2(\mathbb R^2)}\|\Delta_{N_3}\bar{u}_3(t, \cdot) \|_{L^2(\mathbb R^2)}.
\end{align*}
Integrating in the variable $t$, using H\"older's inequality and $X^{0, b'} \hookrightarrow L^4_t L^2_z$, we have
\begin{align*}
    \Big|\int_0^T\int_{\R^2}\Delta_{N_0}\bar{u}_0\Delta_{N_1}u_1\Delta_{N_2}u_2\Delta_{N_3}\bar{u}_3\dd z\dd t \Big| \leq C_K N_0^{-K}\prod_{i=0}^3\big\| \Delta_{N_i}u_i \big\|_{X^{0,b'}_T(\R^2)},
\end{align*}

where $b'<\frac12$. The sum in $N_0,N_1,N_2,N_3$ is finite due to the large negative power of $N_0$.

\textbf{Case 2:} $N_0\leq N_1^{1+\delta}$, with $\delta>0$ to be chosen later depending on $\varepsilon$. By Proposition \ref{proposition-Poiret2.3.13},
\begin{align*}
\begin{aligned}
& \Big| \int_0^T\int_{\R^2}\Delta_{N_0}\bar{u}_0\Delta_{N_1}u_1\Delta_{N_2}u_2\Delta_{N_3}\bar{u}_3\dd z\dd t \Big|\\
\leq & \big\| \Delta_{N_1}u_1\Delta_{N_2}u_2 \big\|_{L^2((0,T);L^2(\R^2))}\big\| \Delta_{N_0}u_0\Delta_{N_3}u_3 \big\|_{L^2((0,T);L^2(\R^2))}\\
\leq & ~ C(N_2N_3)^\delta\Big( \frac{N_2N_3}{N_0N_1} \Big)^{\frac12-\delta}\prod_{j=0}^3\big\| \Delta_{N_j}u_j \big\|_{X^{0,b'}_T(\R^2)}.
\end{aligned}
\end{align*}
Since $N_0\leq N_1^{1+\delta}$, we have for $\delta>0$ sufficiently small so that $\kappa=2\epsilon - 2\delta$ is positive,
\begin{align*}
\begin{aligned}
(N_2N_3)^\delta\Big( \frac{N_2N_3}{N_0N_1} \Big)^{\frac12-\delta}N_0^sN_1^{-s}(N_2N_3)^{-\varepsilon}
&=  N_0^{s-\frac12+\delta}(N_2N_3)^{\frac12-\varepsilon}N_1^{-s-\frac12+\delta}\\
&\leq  N_1^{-2\varepsilon+2\delta} = N_1^{-\kappa},
\end{aligned}
\end{align*}
so we can sum over $N_0,N_1,N_2,N_3$.
Using  property $(3)$ of Proposition \ref{Prop-2.2}, we have
\begin{align*}
\begin{aligned}
\MoveEqLeft \sum_{\substack{
N_0\leq N_1^{1+\delta} \\
N_1\geq N_2\geq N_3
}}
\!\! (N_2N_3)^\delta\Big( \frac{N_2N_3}{N_0N_1} \Big)^{\frac12-\delta}N_0^sN_1^{-s}(N_2N_3)^{-\varepsilon} \big\| \Delta_{N_0}u_0 \big\|_{X^{-s,b'}_T(\R^2)}\big\| \Delta_{N_1}u_1 \big\|_{X^{s,b'}_T(\R^2)}\prod_{j=2}^3\big\| \Delta_{N_j}u_j \big\|_{X^{\varepsilon,b'}_T(\R^2)}\\
& \leq  ~ C\big( \sum_N\big\| \Delta_{N}u \big\|_{X^{-s,b'}_T(\R^2)} \big)^\frac12\big( \sum_N\big\| \Delta_{N}u \big\|_{X^{s,b'}_T(\R^2)} \big)^\frac12\prod_{j=2}^3\big( \sum_{N_j}\big\| \Delta_{N_j}u_j \big\|_{X^{\varepsilon,b'}_T(\R^2)} \big)^\frac12.
\end{aligned}
\end{align*}
 This completes the proof.
\end{proof}
\subsection{Local well-posedness in Bourgain spaces}
In this section, we give the well-posedness for the equation \eqref{equation} in Bourgain spaces.
%

\begin{theorem}\label{Theorem-well-posedness}
Let $R>0$ and $s_0\geq \varepsilon>0$ be given. Then there exist $T>0$ and $b>\frac12$ such that \eqref{equation} has a unique local solution in $X^{s_0,b}_T(\R^2)$ for every $\varphi\in \H^{s_0}(\R^2)$ with $\|\varphi\|_{\H^1(\R^2)}<R$. Moreover,
\begin{align*}
\|u\|_{X^{s_0,b}_T(\R^2)} \leq C\|\varphi\|_{\H^{s_0}(\R^2)}.
\end{align*}
\end{theorem}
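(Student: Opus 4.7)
The plan is a standard Banach fixed point argument for the Duhamel reformulation
\[
\Phi(u)(t) = e^{itA}\varphi \mp i \int_0^t e^{i(t-s)A} |u|^2 u(s)\dd s
\]
in the space $X^{s_0,b}_T(\R^2)$ for suitable $T,b$. Three ingredients are needed: first, the homogeneous linear estimate $\|e^{itA}\varphi\|_{X^{s,b}_T(\R^2)} \leq C\|\varphi\|_{\H^s(\R^2)}$, immediate from the definition of $X^{s,b}$; second, the inhomogeneous Duhamel estimate
\[
\Big\|\int_0^t e^{i(t-s)A}F(s)\dd s\Big\|_{X^{s,b}_T(\R^2)} \leq C\,T^{1-b-b'}\|F\|_{X^{s,-b'}_T(\R^2)},
\]
obtained by the usual time-cutoff trick and the condition $b+b'<1$ supplied by Proposition \ref{prop-trilinear estimate}; third, the trilinear estimate \eqref{trilinear estimate} applied to $F=|u|^2u=u\cdot u\cdot\bar u$.

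Combining these yields
\[
\|\Phi(u)\|_{X^{s_0,b}_T(\R^2)} \leq C\|\varphi\|_{\H^{s_0}(\R^2)} + C\,T^{1-b-b'}\|u\|_{X^{s_0,b}_T(\R^2)}\|u\|_{X^{\varepsilon,b}_T(\R^2)}^2,
\]
together with a similar bound for the difference $\Phi(u)-\Phi(v)$. The subtlety is that the small factor $T^{1-b-b'}$ must absorb $\|u\|_{X^{\varepsilon,b}_T}^2$, so the existence time cannot depend on $\|\varphi\|_{\H^{s_0}}$; it has to depend only on a lower regularity norm. I handle this in two stages. Taking $\varepsilon\in(0,1]$ (replacing $\varepsilon$ by a smaller positive number if necessary) and using the trivial embedding $X^{1,b}_T \hookrightarrow X^{\varepsilon,b}_T$, I first run a contraction in a ball of $X^{1,b}_T(\R^2)$ of radius $2CR$; since the right hand side of the corresponding estimate involves $\|\varphi\|_{\H^1(\R^2)}<R$ and $\|u\|_{X^{1,b}_T}^3$, the existence time $T=T(R)$ depends only on $R$. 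This yields a unique $u\in X^{1,b}_T$. Then on the same interval I propagate regularity via
\[
\|u\|_{X^{s_0,b}_T(\R^2)} \leq C\|\varphi\|_{\H^{s_0}(\R^2)} + C\,T^{1-b-b'}\|u\|_{X^{s_0,b}_T(\R^2)}\|u\|_{X^{1,b}_T(\R^2)}^2,
\]
which after shrinking $T(R)$ if necessary produces $\|u\|_{X^{s_0,b}_T(\R^2)}\leq 2C\|\varphi\|_{\H^{s_0}(\R^2)}$, exactly the claimed bound.

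The main obstacle I anticipate is the careful proof of the inhomogeneous Duhamel estimate with the $T^{1-b-b'}$ gain: one extends $F$ by zero outside $[-T,T]$, inserts a compactly supported time cutoff $\eta(t/T)$, and reduces the $X^{s,b}$ norm to a convolution in the dual variable $\tau$ controlled by $b+b'<1$. This is standard in the Bourgain space literature but needs to be checked in the present setting because $X^{s,b}$ is defined through the Hermite expansion in $y$ and the Fourier transform in $(t,x)$; however, the estimate decouples into independent Hermite modes and the classical one-variable argument applies mode-by-mode. Once this is in place, uniqueness in $X^{s_0,b}_T$ follows from the contraction estimate applied to the difference of two solutions, and the quantitative bound stated in the theorem is read off directly from the fixed point argument.
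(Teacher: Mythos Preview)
Your proposal is correct and follows essentially the same route as the paper: a fixed point argument in $X^{s_0,b}_T$ built on the homogeneous estimate, the inhomogeneous Duhamel estimate with gain $T^{1-b-b'}$, and the trilinear estimate of Proposition~\ref{prop-trilinear estimate}. The paper's proof is terse (it simply records \eqref{eq-multilinear estimate} and remarks that for $T\leq 1$ the nonlinear term can be absorbed), whereas your explicit two-stage argument---first contracting in $X^{1,b}_T$ to fix $T=T(R)$, then propagating to $X^{s_0,b}_T$---spells out the mechanism behind the claim that $T$ depends only on $R$ and not on $\|\varphi\|_{\H^{s_0}}$; this is exactly the content the paper compresses into its final sentence.
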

\begin{proof}
To prove this, we apply a fixed point argument for the integral equation
\begin{equation}\label{eq-duhamel}
u(t) = e^{itA}\varphi \pm i\int_0^te^{i(t-\tau)A}(|u|^2u)(\tau)\dd \tau.
\end{equation}
Let $\chi\in C_0^\infty(\R)$ be a cut-off function such that $\operatorname{supp} \chi \subset(-2,2)$ and $\chi\equiv1$ on the interval $[-1,1]$. For $T\leq1$, we consider the following truncated integral equation
\begin{equation}\label{eq-truncated duhamel}
u(t) = \chi(t)e^{itA}\varphi \pm i\chi\left(\frac tT\right)\int_0^te^{i(t-\tau)A}(|u|^2u)(\tau)\dd \tau.
\end{equation}
By the definition of $X^{s_0,b}$, we have
\begin{align*}
\|\chi(t)e^{itA}\varphi\|_{X^{s_0,b}_T(\R^2)} = \|\chi(t)\|_{H^b_t(\R)}\|\varphi\|_{\H^{s_0}(\R^2)} \leq C\|\varphi\|_{\H^{s_0}(\R^2)}.
\end{align*}
It follows that
\begin{align*}
\Big\| \chi\left(\frac tT\right)\int_0^t e^{i(t-\tau)A}\big(|u|^2u\big)(\tau)\dd \tau \Big\|_{X^{s_0,b}_T(\R^2)} \leq C T^{1-b-b'}\big\| |u|^2u \big\|_{X^{s_0,-b'}_T(\R^2)}
\end{align*}
holds for $T\leq1$ and $b>\frac12, b'<\frac12$ such that $b+b'<1$. By Proposition \ref{prop-trilinear estimate}, we obtain
\begin{equation}\label{eq-multilinear estimate}
\big\| |u|^2u \big\|_{X^{s_0,-b'}_T(\R^2)} \leq C \|u\|_{X^{s_0,b}_T(\R^2)}\|u\|_{X^{\varepsilon,b}_T(\R^2)}^2
\end{equation}
for some $b>\frac12, b'<\frac12$ such that $b+b'<1$. Note that $T\leq1$, the nonlinear part can be absorbed by the left-hand side and thus we have completed the proof.
\end{proof}

\section{Growth of Sobolev norms}\label{sec-growth}
In this section, we will  write $\H^s$ instead of  $\H^s(\mathbb R^2)$ for the ease of notation.
This section is devoted to proving Theorem \ref{Theorem-growth}. Since $\partial_t$ has better commutation properties with the nonlinear Schr\"odinger flow than the operator $A$, we first prove that the $\H^{2k}$ norm of the solution $u$ and $\|\partial_tu\|_{L^2}$ are comparable.

\begin{proposition}\label{Proposition-partial_t estimate}
Let $k,s\in\N$ and $R>0$ be given. Let $T>0$ be associated with $R$ and $s_0 = 2k+s$ as in Theorem \ref{Theorem-well-posedness} and let $u\in X^{2k+s,b}_T(\R^2)$ be the unique solution to \eqref{equation} with initial condition $\varphi\in \H^{2k+s}$ with $\|\varphi\|_{\H^1}<R$. Assume moreover that $\sup_{(-T,T)}\|u(t, \cdot)\|_{\H^1}<R$. Then there exists $C>0$ such that
\begin{equation}\label{eq-comparable}
\big\| \partial_t^ku(t)-i^kA^ku(t) \big\|_{\H^s} \leq C\|u(t)\|_{\H^{s+2k-1}}, \quad \forall~t\in(-T,T).
\end{equation}
\end{proposition}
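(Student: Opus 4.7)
The plan is to argue by induction on $k$. The case $k=0$ is trivial. For $k=1$, equation \eqref{equation} directly gives $\partial_t u - iAu = \pm i|u|^2u$, so it suffices to show $\||u|^2u\|_{\H^s}\le C(R)\|u\|_{\H^{s+1}}$. I would prove this using the norm equivalence \eqref{eq-equivalence} together with a fractional Moser-type estimate $\||u|^2u\|_{\H^s}\lesssim\|u\|_{L^\infty}^2\|u\|_{\H^s}$ and the 2D Agmon inequality $\|u\|_{L^\infty}^2\lesssim\|u\|_{L^2}\|u\|_{\H^2}$. Interpolating the product $\|u\|_{\H^2}\|u\|_{\H^s}$ between $\H^1$ and $\H^{s+1}$ collapses it to $\|u\|_{\H^1}\|u\|_{\H^{s+1}}\le R\|u\|_{\H^{s+1}}$, which closes the base case.

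For the inductive step, the crucial algebraic identity is
\[
\partial_t^k u - i^k A^k u = \partial_t\bigl(\partial_t^{k-1}u - i^{k-1}A^{k-1}u\bigr) + i^{k-1}A^{k-1}\bigl(\partial_t u - iAu\bigr),
\]
obtained by adding and subtracting $i^{k-1}A^{k-1}\partial_t u$ and using that $A$ commutes with $\partial_t$. Substituting $\partial_t u - iAu = \pm i|u|^2u$ and telescoping in $k$ yields
\[
\partial_t^k u - i^k A^k u = \sum_{j=0}^{k-1} i^j A^j\,\partial_t^{k-1-j}\bigl(\pm i|u|^2u\bigr).
\]
Every remaining $\partial_t$ inside $\partial_t^{k-1-j}(|u|^2u)$ is then eliminated by iteratively applying $\partial_t u = iAu \pm i|u|^2u$ and its conjugate, so the right-hand side becomes a finite linear combination of polynomial expressions in $u$, $\bar u$ and their spatial derivatives, of odd degree at least $3$ and total differential order at most $2(k-1)$.

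To estimate each such expression in $\H^s$, I would invoke the norm equivalence \eqref{eq-equivalence} together with the Bernstein-type estimates of Lemma \ref{Lemma-Bernstein estimate}: place the factor carrying the largest derivative load into $L^2$ (costing at most $\|u\|_{\H^{s+2(k-1)}}$) and estimate each remaining factor in $L^\infty$ via Agmon (costing $\|u\|_{L^2}^{1/2}\|u\|_{\H^{2a+2}}^{1/2}$ for a factor of $A$-order $a$). Since the total differential order is at most $2(k-1)$ and one extra derivative is available in the budget $s+2k-1$, Sobolev interpolation between $\H^1$ (controlled by $R$) and $\H^{s+2k-1}$ converts the resulting product of norms into $C(R)\|u\|_{\H^{s+2k-1}}$. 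The main obstacle is exactly this interpolation bookkeeping: for every admissible distribution of derivatives over the polynomial one has to verify that the exponent on the top norm $\|u\|_{\H^{s+2k-1}}$ collapses to precisely $1$, with every other factor absorbed into powers of $\|u\|_{\H^1}\le R$; the Agmon step is essential here because it trades each $L^\infty$ control for exactly two $A$-derivatives, matching the $s+2k-1$ regularity budget on the nose.
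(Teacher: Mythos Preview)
Your inductive skeleton and the telescoped identity
\[
\partial_t^k u - i^k A^k u \;=\; \sum_{j=0}^{k-1} c_j\,\partial_t^{k-1-j}A^{j}\bigl(|u|^2u\bigr)
\]
agree exactly with the paper's formula \eqref{eq-induction}. The difference lies in how you treat the cubic terms, and there your Agmon-based scheme has a genuine gap at the endpoint $s=0$.

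\textbf{The gap.} Take $k=1$, $s=0$: you need $\||u|^2u\|_{L^2}\le C\|u\|_{\H^1}$. Your route gives
\[
\||u|^2u\|_{L^2}\lesssim \|u\|_{L^\infty}^2\|u\|_{L^2}\lesssim \|u\|_{L^2}^2\|u\|_{\H^2},
\]
and then you propose to interpolate $\|u\|_{\H^2}\|u\|_{\H^s}$ between $\H^1$ and $\H^{s+1}$. For $s=0$ both endpoints coincide with $\H^1$, so $\|u\|_{\H^2}$ simply cannot be recovered: Agmon has overshot the budget $\H^{s+2k-1}=\H^1$ by one derivative. The same obstruction resurfaces in the general step whenever a factor carrying the maximal load sits in the $L^\infty$ slot of the Kato--Ponce sum and $s=0$; note also that for $\H^\sigma$ with $\sigma>0$ you do \emph{not} get to choose which factor goes into $L^2$---the fractional Leibniz rule forces a symmetric sum, so ``place the largest-derivative factor into $L^2$'' is only literally available when $\sigma=0$.

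\textbf{How the paper avoids this.} The paper makes two different choices. First, it does \emph{not} iterate the equation to strip every $\partial_t$; it keeps $\partial_t^{j_l}u$ on each factor after the time-Leibniz expansion and invokes the induction hypothesis in the form $\|\partial_t^{j_l}u\|_{\H^{s_l+1}}\le C\|u\|_{\H^{2j_l+s_l+1}}$, so only cubic products ever appear. Second, instead of $L^2\times L^\infty\times L^\infty$ plus Agmon, it uses the symmetric H\"older split $L^6\times L^6\times L^6$ together with $W^{s_l,6}\hookleftarrow \H^{s_l+1}$, spending exactly one derivative per factor. Since $H^1(\R^2)\hookrightarrow L^6$, this works uniformly down to $s=0$, and the subsequent interpolation $\theta_l(s+2k+1)+(1-\theta_l)=2j_l+s_l+1$ sums cleanly to $\sum\theta_l=1$.

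In short: replace Agmon by the $L^6$ Sobolev embedding, and use the induction hypothesis on $\partial_t^{j}u$ rather than unravelling every time derivative into spatial polynomials. The rest of your plan is sound.
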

\begin{proof}
Fixed $t$, and using the induction argument on $m$, we obtain
\begin{equation}\label{eq-induction}
\partial_t^mu = i^mA^mu + \sum_{j=0}^{m-1}c_j\partial_t^jA^{m-j-1}(|u|^2u),\quad \forall m\geq1
\end{equation}
for some $c_j\in\C$.

Next, we use the induction on $k$ to prove \eqref{eq-comparable}. In fact, for $k=0$, it is easy to verify. Therefore, assuming \eqref{eq-comparable} holds for $k$, we only need to show that the same estimate is true for $k+1$. Using \eqref{eq-induction} with $m=k+1$, we want to show\begin{align*}
\begin{aligned}
    \big\| \partial_t^{k+1}u(t)-i^{k+1}A^{k+1}u(t) \big\|_{\H^s}
    &= \big\| \sum_{j=0}^{k}c_j\partial_t^jA^{k-j}(|u|^2u) \big\|_{\H^s}\\
    &\leq   \sum_{j=0}^{k}\big\| \partial_t^j(|u|^2u) \big\|_{\H^{2k-2j+s}} \leq C \|u\|_{\H^{s+2k+1}}.
\end{aligned}
\end{align*}
Then by \eqref{eq-equivalence}, it suffices to prove
\begin{equation}\label{eq-derivative}
\big\| D_{x,y}^{2k-2j+s}\partial_t^j(|u|^2u) \big\|_{L^2(\R^2)} \leq C \|u\|_{\H^{s+2k+1}}, \quad j=0,1,...,k,
\end{equation}
and
\begin{equation}\label{eq-moment}
\big\| \langle y \rangle^{2k-2j+s}\partial_t^j(|u|^2u) \big\|_{L^2(\R^2)} \leq C \|u\|_{\H^{s+2k+1}}, \quad j=0,1,...,k.
\end{equation}

For \eqref{eq-derivative}, we use H\"older's inequality, Sobolev embedding, and the induction hypothesis to write
\begin{equation}\label{eq-derivative1}
\begin{aligned}
\sum_{\substack{j_1+j_2+j_3=j,\\ s_1+s_2+s_3=2k-2j+s}}\prod_{l=1,2,3} \big\| \partial^{j_l}_t u \big\|_{W^{s_l,6}(\R^2)} \leq & ~C \sum_{\substack{j_1+j_2+j_3=j,\\ s_1+s_2+s_3=2k-2j+s}}\prod_{l=1,2,3} \big\| \partial^{j_l}_t u \big\|_{\H^{s_l+1}}\\
\leq & ~C\sum_{\substack{j_1+j_2+j_3=j,\\ s_1+s_2+s_3=2k-2j+s}}\prod_{l=1,2,3} \| u \|_{\H^{2j_l+s_l+1}},
\end{aligned}
\end{equation}
where $\|u\|_{W^{s,p}(\R^2)} = \|\langle \nabla \rangle^su\|_{L^p(\R^2)}$. The interpolation argument implies
\begin{align*}
\eqref{eq-derivative1} \leq C \sum_{\substack{j_1+j_2+j_3=j,\\ s_1+s_2+s_3=2k-2j+s}}\big( \prod_{l=1,2,3}\| u \|_{\H^{s+2k+1}}^{\theta_l}\| u \|_{\H^{s+2k+1}}^{1-\theta_l} \big),
\end{align*}
where $\theta_l(s+2k+1)+(1-\theta_l) = 2j_l+s_l+1$ and $\sum_{l=1}^3\theta_l=1$ for every $j=0,...,k$. Therefore, we conclude the proof of \eqref{eq-derivative}.

Now let us prove \eqref{eq-moment}. By Leibniz rule, H\"older's inequality and the Sobolev embedding, we have
\begin{equation}
\begin{aligned}\label{eq-moment1}
\big\| \langle y \rangle^{2k-2j+s}\partial_t^j(|u|^2u) \big\|_{L^2(\R^2)}  \leq & ~ \sum_{\substack{j_1+j_2+j_3=j,\\ s_1+s_2+s_3=2k-2j+s}}\prod_{l=1,2,3} \big\| \langle y \rangle^{s_l}\partial^{j_l}_t u \big\|_{L^{6}(\R^2)}\\
\leq & C \sum_{\substack{j_1+j_2+j_3=j,\\ s_1+s_2+s_3=2k-2j+s}}\prod_{l=1,2,3} \big\| \partial^{j_l}_t u \big\|_{\H^{s_l+1}}.
\end{aligned}
\end{equation}
Then similarly to the proof of \eqref{eq-derivative}, we  get \eqref{eq-moment}.
\end{proof}

We will estimate the norm of $\partial_t$ of the solution $u$ in the localized Bourgain $X^{s,b}_T(\R^2)$ spaces, by using suitable Sobolev norms of the initial data. Before that, let us first give some estimates about suitable linear operators in the $X^{s,b}(\R\times\R^2)$ spaces.

\begin{lemma}\label{Proposition-bougain space estimate}
For every $\delta\in(0,\frac12)$, $b\in(0,1)$, there exists $C>0$ such that we have the following estimates for every $T>0$:
\begin{align}\label{eq-linear estimate}
\|Lu\|_{X^{-\frac12+\delta,\frac12-\delta+2\delta b}_T(\R^2)} \leq & ~ C \|u\|_{X^{\frac12+\delta,\frac12-\delta+2\delta b}_T(\R^2)},\\
\label{eq-linear estimate1}
\|Lu\|_{X^{\delta,(1-\delta)b}_T(\R^2)} \leq & ~ C \|u\|_{X^{1+\delta,(1-\delta)b}_T(\R^2)},
\end{align}
where $L$ can be either $\partial_{x}$, $\partial_{y}$ or multiplication by $\langle y \rangle$.
\end{lemma}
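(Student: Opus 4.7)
Both inequalities are instances of a single statement: for $L\in\{\partial_x,\partial_y,\langle y\rangle\}$, any $s\in\R$ and any $b'\in(0,1)$, the map $L:X^{s,b'}(\R\times\R^2)\to X^{s-1,b'}(\R\times\R^2)$ is bounded with a constant independent of $s$ and $b'$; the restricted-time versions \eqref{eq-linear estimate}--\eqref{eq-linear estimate1} then follow by taking an infimum over extensions. Expanding $u(t,x,y)=\sum_k h_k(y)u_k(t,x)$ in the Hermite basis and Fourier-transforming in $(t,x)$, the Bourgain norm reads
\[
\|u\|_{X^{s,b'}}^2\ \sim\ \sum_k\int\langle\xi^2+2k+1\rangle^{s}\langle\tau+\xi^2+2k+1\rangle^{2b'}|\hat u_k(\tau,\xi)|^2\,d\tau\,d\xi,
\]
and I would carry out the estimate directly on these coefficients.

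The case $L=\partial_x$ is immediate: since $[\partial_x,A]=0$, $\partial_x$ acts on the Fourier side as multiplication by $i\xi$, and $|\xi|\le\langle\xi^2+2k+1\rangle^{1/2}$ supplies the requisite one-derivative loss. For $L=\partial_y$ and $L=y$ I would invoke the creation/annihilation operators $a=\tfrac{1}{\sqrt2}(\partial_y+y)$ and $a^\ast=\tfrac{1}{\sqrt2}(-\partial_y+y)$, which act on Hermite functions as $ah_k=\sqrt k\,h_{k-1}$ and $a^\ast h_k=\sqrt{k+1}\,h_{k+1}$; this yields
\[
\widehat{(\partial_y u)_k}=\tfrac{1}{\sqrt2}\bigl(\sqrt{k+1}\,\hat u_{k+1}-\sqrt k\,\hat u_{k-1}\bigr),\qquad \widehat{(y u)_k}=\tfrac{1}{\sqrt2}\bigl(\sqrt{k+1}\,\hat u_{k+1}+\sqrt k\,\hat u_{k-1}\bigr).
\]
Plugging these into $\|Lu\|_{X^{s-1,b'}}^2$ and reindexing each of the two summands produces a weight of the form $k\,\langle\xi^2+2k+1\rangle^{s-1}\langle\tau+\xi^2+2k+1\rangle^{2b'}$ on $|\hat u_k|^2$, up to shifts of the eigenvalue by $\pm2$ that are absorbed into the Japanese brackets. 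The elementary bound $k\lesssim\langle\xi^2+2k+1\rangle$ then upgrades this to $\langle\xi^2+2k+1\rangle^{s}\langle\tau+\xi^2+2k+1\rangle^{2b'}$, which is precisely $\|u\|_{X^{s,b'}}^2$.

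Finally, the case $L=\langle y\rangle$ reduces to the two previous ones through the pointwise bound $\langle y\rangle\le 1+|y|$, giving $\|\langle y\rangle u\|_{X^{s-1,b'}}\le\|u\|_{X^{s-1,b'}}+\|yu\|_{X^{s-1,b'}}\lesssim\|u\|_{X^{s,b'}}$. The only delicate point is that $\partial_y$ and $y$ fail to commute with $A$, so $Lu$ is supported on Hermite modes differing by $\pm 1$ from those of $u$; this is exactly what forces the creation/annihilation identities and the subsequent reindexing. The reindexing is harmless because the eigenvalue shifts by only $\pm 2$, which is swallowed by the Japanese brackets in both the spatial and the modulation weights---this is the main (rather mild) obstacle of the argument.
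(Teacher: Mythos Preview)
Your direct Hermite--Fourier computation for $L=\partial_x$, $\partial_y$, and multiplication by $y$ is correct: the creation/annihilation identities give a tridiagonal action on the Hermite side, and the $\pm 2$ shift in the eigenvalue is indeed absorbed by both the spatial weight $\langle\xi^2+2k+1\rangle^{s-1}$ and the modulation weight $\langle\tau+\xi^2+2k+1\rangle^{2b'}$. The paper itself gives no proof and simply refers to Proposition~2.1 of \cite{PTV21}; for these three operators your argument is a clean self-contained alternative.

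There is, however, a genuine gap in your treatment of $L=\langle y\rangle$. The pointwise inequality $\langle y\rangle\le 1+|y|$ does \emph{not} imply
\[
\|\langle y\rangle u\|_{X^{s-1,b'}}\le \|u\|_{X^{s-1,b'}}+\|yu\|_{X^{s-1,b'}},
\]
because the $X^{s-1,b'}$ norm is defined through Hermite coefficients and the Fourier transform, not as a weighted $L^2$ integral in $y$; it is not monotone under pointwise domination of the multiplier, and in \eqref{eq-linear estimate} one even has $s-1=-\tfrac12+\delta<0$, where no such monotonicity can hold. In addition, $|y|u$ and $yu$ have different Hermite expansions, so you could not trade $\||y|u\|_{X^{s-1,b'}}$ for $\|yu\|_{X^{s-1,b'}}$ even if the first step were valid. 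Since $\langle y\rangle$ is not a finite linear combination of $a,a^\ast$, your tridiagonal argument does not extend to it; this case needs a separate ingredient, for instance the interpolation/commutator route of \cite{PTV21}, where one uses that $[A,\langle y\rangle]=-\partial_y\circ\tfrac{y}{\langle y\rangle}-\tfrac{y}{\langle y\rangle}\circ\partial_y$ is a first-order operator with bounded smooth coefficients.
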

\begin{proof}
The proof is same as the proof of Proposition 2.1 in \cite{PTV21}, hence we omit it.
\end{proof}

\begin{proposition}\label{Proposition-strichartz estimate}
Let $n\in\N$, $ R>0$ and $s\in (0,2]$ be given. Let $T>0$ be associated with $R$ and $s_0 = 2n+2$ as in Theorem \ref{Theorem-well-posedness} and let $u\in X^{2n+2,b}_T(\R^2)$ be the unique solution to \eqref{equation} with initial condition $\varphi\in \H^{2n+2}$ and $\|\varphi\|_{\H^1}<R$. Assume that $\sup_{t\in(-T,T)}\|u(t, \cdot)\|_{\H^1}<R$. Then there exists $C>0$ such that
\begin{equation}\label{eq-Strichartz estimate}
\big\| \partial_t^nu \big\|_{X^{s,b}_T(\R^2)} \leq C \|\varphi\|_{\H^{2n}}^{1-s}\|\varphi\|_{\H^{2n+1}}^s, \quad s\in(0,1],
\end{equation}
and
\begin{equation}\label{eq-Strichartz estimate1}
\big\| \partial_t^nu \big\|_{X^{s,b}_T(\R^2)} \leq C \|\varphi\|_{\H^{2n+1}}^{2-s}\|\varphi\|_{\H^{2n+2}}^{s-1}, \quad s\in(1,2].
\end{equation}
\end{proposition}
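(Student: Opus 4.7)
The strategy is to set $v:=\partial_t^n u$ and exploit the fact that $v$ solves a linear Schr\"odinger equation with a forcing term: differentiating \eqref{equation} $n$ times in $t$ gives
\[
i\partial_t v + Av = \mp\,\partial_t^n(|u|^2u), \qquad v(0)=\partial_t^n u(0).
\]
Applying the linear Bourgain space machinery already used in the proof of Theorem \ref{Theorem-well-posedness} (extension, truncation, Duhamel) to this equation yields
\[
\|v\|_{X^{s,b}_T(\R^2)} \;\leq\; C\,\|v(0)\|_{\H^s} \;+\; CT^{1-b-b'}\,\bigl\|\partial_t^n(|u|^2u)\bigr\|_{X^{s,-b'}_T(\R^2)}
\]
for suitable $b>\tfrac12>b'$ with $b+b'<1$. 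The problem thus reduces to (i) bounding the initial data $\|v(0)\|_{\H^s}$, (ii) bounding the nonlinear source in $X^{s,-b'}_T$, and (iii) interpolating in $s$ at the end.

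For (i), I would reuse the algebraic identity \eqref{eq-induction} from the proof of Proposition \ref{Proposition-partial_t estimate} at time $t=0$, namely $\partial_t^n u(0)=i^n A^n\varphi +\sum_{j=0}^{n-1} c_j\,\partial_t^j A^{n-j-1}(|u|^2u)\big|_{t=0}$. The leading term contributes $\|A^n\varphi\|_{\H^s}=\|\varphi\|_{\H^{s+2n}}$ for integer $s\in\{0,1,2\}$, and the remaining pieces involve strictly fewer than $2n$ derivatives on $\varphi$, so they are controlled by $\|\varphi\|_{\H^{s+2n}}$ using the equivalence \eqref{eq-equivalence}, the Sobolev embedding $\H^{\sigma+1}\hookrightarrow W^{\sigma,6}$, and the standing bound $\|\varphi\|_{\H^1}\leq R$. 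For (ii) I would apply the Leibniz rule to distribute the $n$ time derivatives across the three factors of $|u|^2u$, obtaining a sum of trilinear terms $\partial_t^{n_1}u\cdot\partial_t^{n_2}u\cdot\partial_t^{n_3}\bar u$ with $n_1+n_2+n_3=n$. Proposition \ref{prop-trilinear estimate} then bounds each such piece (after symmetry) by
\[
C\,\bigl\|\partial_t^{n_1}u\bigr\|_{X^{s,b}_T}\bigl\|\partial_t^{n_2}u\bigr\|_{X^{\varepsilon,b}_T}\bigl\|\partial_t^{n_3}u\bigr\|_{X^{\varepsilon,b}_T},
\]
and the induction hypothesis (on $n$, with the base case $n=0$ supplied by Theorem \ref{Theorem-well-posedness}) together with $\|u\|_{X^{\varepsilon,b}_T}\leq CR$ closes the two low-regularity factors. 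Lemma \ref{Proposition-bougain space estimate} enters here to convert between the derivative and $\langle y\rangle$-weight components of the $\H^s$ norm visible through \eqref{eq-equivalence}: it permits single factors of $\partial_x$, $\partial_y$, or $\langle y\rangle$ to be absorbed at a controlled loss in the Bourgain indices.

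Running this argument at the three integer endpoints $s\in\{0,1,2\}$ and absorbing the resulting nonlinear contribution into the left-hand side by taking $T$ small depending on $R$ yields the three bounds $\|\partial_t^n u\|_{X^{j,b}_T}\leq C\|\varphi\|_{\H^{2n+j}}$ for $j=0,1,2$. Since the $X^{s,b}$ norm is a weighted $L^2$ with weight $\langle\xi^2+2k+1\rangle^{s/2}$ in $(\tau,\xi,k)$, Hölder's inequality gives the logarithmic convexity
\[
\|v\|_{X^{s,b}_T}\;\leq\;\|v\|_{X^{s_0,b}_T}^{1-\theta}\,\|v\|_{X^{s_1,b}_T}^{\theta}, \qquad s=(1-\theta)s_0+\theta s_1.
\]
Interpolating between $(s_0,s_1)=(0,1)$ yields \eqref{eq-Strichartz estimate}, and between $(s_0,s_1)=(1,2)$ yields \eqref{eq-Strichartz estimate1}. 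The main obstacle is the nonlinear book-keeping in step (ii) when all $n$ time-derivatives land on one factor: then only that factor carries top regularity $s$, while the other two must be absorbed in $X^{\varepsilon,b}_T$ via the $\H^1$ a priori bound on $u$, and one must verify that the induction hypothesis at level $n$ is strong enough to handle the highest-weight factor. The careful choice of $b,b'$ in Proposition \ref{prop-trilinear estimate} and the interpolation lemma ensure all indices line up, making the scheme close.
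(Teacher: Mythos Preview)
Your overall scheme---write $v=\partial_t^n u$, apply the $X^{s,b}$ Duhamel machinery, bound the initial trace via Proposition \ref{Proposition-partial_t estimate}, expand the forcing by Leibniz and estimate with Proposition \ref{prop-trilinear estimate}, then close by induction on $n$ with the top-order term absorbed by the factor $T^{1-b-b'}$---is exactly the paper's route. The organizational difference is where the interpolation happens: the paper works directly at general $s\in(0,1]$ (respectively $s\in(1,2]$), first interpolating $\|\partial_t^n u(0)\|_{\H^s}\leq\|\partial_t^n u(0)\|_{L^2}^{1-s}\|\partial_t^n u(0)\|_{\H^1}^s$ and then invoking Proposition \ref{Proposition-partial_t estimate}; in the nonlinear sum it keeps \emph{all three} factors at level $X^{s,b}_T$ and, after applying the inductive hypothesis to each, interpolates the resulting $\|\varphi\|_{\H^{2n_j}}^{1-s}\|\varphi\|_{\H^{2n_j+1}}^s$ between $\H^1$ and $\H^{2n}$, $\H^{2n+1}$ to sum. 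Your endpoint-then-interpolate variant is a legitimate alternative; just note that Proposition \ref{prop-trilinear estimate} requires $s\geq\varepsilon>0$, so the endpoint $s=0$ must be replaced by a small positive value (otherwise the $X^{\varepsilon,b}$ factor on the right cannot be absorbed into $X^{0,b}$ on the left), with the residual $\varepsilon$ eliminated afterward via $\|\varphi\|_{\H^{2n+\varepsilon}}\leq\|\varphi\|_{\H^{2n}}^{1-\varepsilon}\|\varphi\|_{\H^{2n+1}}^\varepsilon$.

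One point to drop: your invocation of Lemma \ref{Proposition-bougain space estimate} is spurious here. The forcing $\partial_t^n(|u|^2u)$ carries no $\partial_x$, $\partial_y$, or $\langle y\rangle$ factors, so there is nothing to ``convert''; the paper does not use that lemma in this proof. It enters only later, in Proposition \ref{Propostion-modified energy}, where the modified energies $\mathcal{S}_{2k+2}$, $\mathcal{R}_{2k+2}$ genuinely contain $L$-operators.
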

\begin{proof}
We first consider the case where $s\in(0,1]$. By the integral equation solved by $\partial_t^nu$,
\begin{align*}
\partial_t^nu(t) = e^{itA}\partial_t^nu(0) + \int_0^te^{i(t-\tau)A}\partial_\tau^n(|u|^2u)(\tau)\dd \tau,
\end{align*}
and the standard properties in Bourgain space $X^{s,b}$, we obtain
\begin{align*}
\big\| \partial_t^nu(t) \big\|_{X^{s,b}_T(\R^2)} \leq C \Big( \big\| \partial_t^nu(0) \big\|_{\H^s} + \Big\| \int_0^te^{i(t-\tau)A}\partial_\tau^n(|u|^2u)(\tau)\dd \tau \Big\|_{X^{s,b}_T} \Big).
\end{align*}
Then by Leibniz rule and Proposition \ref{prop-trilinear estimate}, we obtain
\begin{equation}\label{eq-partial_lu estimate}
\big\| \partial_t^nu(t) \big\|_{X^{s,b}_T(\R^2)}
\leq C \Big( \big\| \partial_t^nu(0) \big\|_{\H^s}
+ T^\gamma\sum_{n_1+n_2+n_3=n}\big\| \partial_t^{n_1}u(t) \big\|_{X^{s,b}_T}\big\| \partial_t^{n_2}u(t) \big\|_{X^{s,b}_T}\big\| \partial_t^{n_3}u(t) \big\|_{X^{s,b}_T} \Big).
\end{equation}

We first estimate the linear part $\big\| \partial_t^n u(0) \big\|_{\H^s}$. By interpolation and Proposition \ref{Proposition-partial_t estimate},
\begin{equation}
\begin{aligned}\label{eq-partiall linear estimate}
\big\| \partial_t^nu(0) \big\|_{\H^s} \leq & ~ C \big\| \partial_t^nu(0) \big\|_{\H^1}^s\big\| \partial_t^nu(0) \big\|_{L^2(\R^2)}^{1-s} \leq C\| \varphi \|_{\H^{2n+1}}^s\| \varphi \|_{\H^{2n}}^{1-s}.
\end{aligned}
\end{equation}
Then we need to estimate the nonlinear part in \eqref{eq-partial_lu estimate}. In fact
\begin{align*}
& \sum_{n_1+n_2+n_3=n}\big\| \partial_t^{n_1}u(t) \big\|_{X^{s,b}_T(\R^2)}\big\| \partial_t^{n_2}u(t) \big\|_{X^{s,b}_T(\R^2)}\big\| \partial_t^{n_3}u(t) \big\|_{X^{s,b}_T(\R^2)} \\
\leq & \sum_{\substack{n_1+n_2+n_3=n,\\ 0< \min\{ n_1,n_2,n_3 \}\leq \max\{ n_1,n_2,n_3 \}<n}}\big\| \partial_t^{n_1}u(t) \big\|_{X^{s,b}_T(\R^2)}\big\| \partial_t^{n_2}u(t) \big\|_{X^{s,b}_T(\R^2)}\big\| \partial_t^{n_3}u(t) \big\|_{X^{s,b}_T(\R^2)}\\
& + \sum_{\substack{n_1+n_2+n_3=n,\\ 0= \min\{ n_1,n_2,n_3 \}\leq \max\{ n_1,n_2,n_3 \}<n}}\big\| \partial_t^{n_1}u(t) \big\|_{X^{s,b}_T(\R^2)}\big\| \partial_t^{n_2}u(t) \big\|_{X^{s,b}_T(\R^2)}\big\| \partial_t^{n_3}u(t) \big\|_{X^{s,b}_T(\R^2)}\\
& + \sum_{\substack{n_1+n_2+n_3=n,\\ \max\{ n_1,n_2,n_3 \}=n}}\big\| \partial_t^{n_1}u(t) \big\|_{X^{s,b}_T(\R^2)}\big\| \partial_t^{n_2}u(t) \big\|_{X^{s,b}_T(\R^2)}\big\| \partial_t^{n_3}u(t) \big\|_{X^{s,b}_T(\R^2)} = I + II + III.
\end{align*}

For $I$, by the induction hypothesis for $n_1,n_2,n_3$ and interpolation, we have
  \begin{align*}
  \begin{aligned}
  & \sum_{\substack{n_1+n_2+n_3=n,\\ \max\{ n_1,n_2,n_3 \}<n}}\big\| \partial_t^{n_1}u(t) \big\|_{X^{s,b}_T(\R^2)}\big\| \partial_t^{n_2}u(t) \big\|_{X^{s,b}_T(\R^2)}\big\| \partial_t^{n_3}u(t) \big\|_{X^{s,b}_T(\R^2)}\\
  \leq & ~ C \sum_{\substack{n_1+n_2+n_3=n,\\ \max\{ n_1,n_2,n_3 \}<n}}\| \varphi \|_{\H^{2n_1}}^{1-s}\| \varphi \|_{\H^{2n_2}}^{1-s}\| \varphi \|_{\H^{2n_3}}^{1-s}\| \varphi \|_{\H^{2n_1+1}}^s\| \varphi \|_{\H^{2n_2+1}}^s\| \varphi \|_{\H^{2n_3+1}}^s\\
  \leq & ~ C \| \varphi \|_{\H^{2n}}^{(1-s)\eta_1}\| \varphi \|_{\H^{2n}}^{(1-s)\eta_2}\| \varphi \|_{\H^{2n}}^{(1-s)\eta_3}\| \varphi \|_{\H^1}^{(1-s)(1-\eta_1)}\| \varphi \|_{\H^1}^{(1-s)(1-\eta_2)}\| \varphi \|_{\H^1}^{(1-s)(1-\eta_3)}\\
  & \times \| \varphi \|_{\H^{2n+1}}^{s\theta_1}\| \varphi \|_{\H^{2n+1}}^{s\theta_2}\| \varphi \|_{\H^{2n+1}}^{s\theta_3}\| \varphi \|_{\H^1}^{s(1-\theta_1)}\| \varphi \|_{\H^1}^{s(1-\theta_2)}\| \varphi \|_{\H^1}^{s(1-\theta_3)},
  \end{aligned}
  \end{align*}
  where $\eta_i=\frac{2n_i+1}{2n-1}$, and $\theta_i=\frac{n_i}{n}$ with $i=1,2,3$. Computing $\theta_1+\theta_2+\theta_3=1$ and $\eta_1+\eta_2+\eta_3<1$, we have
  \begin{equation}\label{eq-case 1}
  \sum_{\substack{n_1+n_2+n_3=n,\\ \max\{ n_1,n_2,n_3 \}<n}}\big\| \partial_t^{n_1}u(t) \big\|_{X^{s,b}_T(\R^2)}\big\| \partial_t^{n_2}u(t) \big\|_{X^{s,b}_T(\R^2)}\big\| \partial_t^{n_3}u(t) \big\|_{X^{s,b}_T(\R^2)} \leq C \|\varphi\|_{\H^{2n}}^{1-s}\|\varphi\|_{\H^{2n+1}}^s.
  \end{equation}

As for $II$, without loss of generality, we suppose $n_1 = 0$. Similar to the previous estimate, since $\| u_0 \|_{\H^{2n_1}} = \| \varphi \|_{L^2(\R^2)}$ is bounded, it is not necessary to introduce the parameter $\eta_1$. The conclusion is same with $I$, we omit it.

By Theorem \ref{Theorem-well-posedness}, we control $III$ with
  \begin{equation}\label{eq-case 3}
  \| u(t) \|_{X^{s,b}_T(\R^2)}^2\big\| \partial_t^{n}u(t) \big\|_{X^{s,b}_T(\R^2)} \leq \| u(t) \|_{X^{1,b}_T(\R^2)}^2\big\| \partial_t^{n}u(t) \big\|_{X^{s,b}_T(\R^2)} \leq C\| \varphi \|_{\H^1}^2\big\| \partial_t^{n}u(t) \big\|_{X^{s,b}_T(\R^2)}.
  \end{equation}

  Finally, combining \eqref{eq-partial_lu estimate}, \eqref{eq-partiall linear estimate}, \eqref{eq-case 1} and \eqref{eq-case 3}, we have
  \begin{align*}
  \big\| \partial_t^nu(t) \big\|_{X^{s,b}_T(\R^2)} \leq C \big( \|\varphi\|_{\H^{2n}}^{1-s}\|\varphi\|_{\H^{2n+1}}^s + T^\gamma\big\| \partial_t^nu(t) \big\|_{X^{s,b}_T(\R^2)} \big).
  \end{align*}
  Note that the time $T$ depends only on the $\H^1$ norm of the solution by Theorem \ref{Theorem-well-posedness}, the nonlinear part can be absorbed by the left hand side. This finishes the proof of \eqref{eq-Strichartz estimate}.

Next, we use Theorem \ref{Theorem-well-posedness} to control the left hand side of \eqref{eq-Strichartz estimate1},
\begin{equation}\label{eq-partial_lu estimate1}
\big\| \partial_t^nu(t) \big\|_{X^{s,b}_T(\R^2)} \leq C \Big( \big\| \partial_t^nu(0) \big\|_{\H^s} + T^\gamma\sum_{n_1+n_2+n_3=n}\big\| \partial_t^{n_1}u(t) \big\|_{X^{s,b}_T}\big\| \partial_t^{n_2}u(t) \big\|_{X^{s-1,b}_T}\big\| \partial_t^{n_3}u(t) \big\|_{X^{s-1,b}_T} \Big).
\end{equation}
Similar to prove the linear part and nonlinear part in \eqref{eq-Strichartz estimate}, we can obtain the estimate \eqref{eq-Strichartz estimate1}.
Therefore, we have completed the proof of the proposition.
\end{proof}

Now let $u\in C\big( (-T,T), \H^{2k+2} \big)$ be a local solution to \eqref{equation} with initial datum $\varphi\in \H^{2k+2}$. We will write $\mathcal{S}_{2k+2}$ and $\mathcal{R}_{2k+2}$ for integrals of the following form:
\begin{equation}\label{eq-S}
\mathcal{S}_{2k+2}(u)(t) = \int_{\R^2}\partial_t^kLu_0\partial_t^{m_1}Lu_1\partial_t^{m_2}u_2\partial_t^{m_3}u_3{\rm d}z, \quad m_1+m_2+m_3=k,
\end{equation}
and
\begin{equation}\label{eq-R}
\mathcal{R}_{2k+2}(u)(t) = \int_{\R^2}\partial_t^kLu_0\partial_t^{n_1}Lu_1\partial_t^{n_2}u_2\partial_t^{n_3}u_3{\rm d}z, \quad n_1+n_2+n_3=k+1, \quad n_1\leq k,
\end{equation}
where $u_0,u_1,u_2,u_3\in\{ u,\bar{u} \}$ and $L$ can be any of the following operators:
\begin{align*}
Lu = \partial_{x}u, \quad  Lu = \partial_{y}u, \quad Lu=\langle y \rangle u, \quad Lu=u.
\end{align*}

From the definition of \eqref{eq-S} and \eqref{eq-R}, we first obtain the following connection between $\H^{2k+2}$ norm with $\mathcal{S}_{2k+2}$ and $\mathcal{R}_{2k+2}$. Moreover, by using Proposition \ref{Proposition-partial_t estimate} and \ref{Proposition-strichartz estimate}, we will give a uniform bound for $\mathcal{S}_{2k+2}$ and $\mathcal{R}_{2k+2}$, which will play an important role in the proof of Theorem \ref{Theorem-growth}.

\begin{proposition}\label{Propostion-modified energy}
    Let $k\in\N, R>0$ be given. Let $T>0$ be associated with $R$ and $s_0 = 2k+2$ as in Theorem \ref{Theorem-well-posedness} and let $u\in C\big( (-T,T), \H^{2k+2} \big)$ be a local solution to \eqref{equation} with initial datum $\varphi\in \H^{2k+2}$. Then we have
\begin{equation}\label{eq-modified energy}
\frac{d}{dt}\Big( \frac12\big\| \partial_t^kAu(t,\cdot) \big\|_{L^2(\R^2)}^2 + \mathcal{S}_{2k+2}(u)(t) \Big) = \mathcal{R}_{2k+2}(u)(t).
\end{equation}
In particular, suppose that $\sup_{(-T,T)}\|u(t, \cdot)\|_{\H^1}<R$.
Then for every $\delta>0$, there exists $C>0$ such that
\begin{align}\label{eq-modefied energy S}
\sup_{t\in(-T,T)}\Big| \mathcal{S}_{2k+2}(u)(t) \Big| \leq C\|\varphi\|_{\H^{2k+2}}^{\frac{4k}{2k+1}+\delta},
\end{align}
and
\begin{align}\label{eq-modefied energy R}
\Big|\int_0^T \mathcal{R}_{2k+2}(u)(\tau){\rm d} \tau \Big| \leq C\|\varphi\|_{\H^{2k+2}}^{\frac{8k+1}{4k+2}+\delta}.
\end{align}
\end{proposition}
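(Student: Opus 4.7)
My plan has three stages corresponding to the three assertions.

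\textbf{Stage 1 (Energy identity).} I differentiate and substitute $\partial_t^{k+1}u=iA\partial_t^k u\pm i\partial_t^k(|u|^2u)$, obtained by differentiating \eqref{equation} $k$ times:
\[
\tfrac{d}{dt}\tfrac12\|\partial_t^k Au\|_{L^2}^2=\Re\langle A\partial_t^k u,\,A\partial_t^{k+1}u\rangle=\pm\Im\int A\partial_t^k u\cdot A\overline{\partial_t^k(|u|^2u)}\,\dd z,
\]
since the $A^2\partial_t^k u$ piece contributes $i\|A^{3/2}\partial_t^k u\|_{L^2}^2$ with zero real part. I then expand $\partial_t^k(|u|^2u)=\sum c_{\vec m}\partial_t^{m_1}u\,\partial_t^{m_2}\bar u\,\partial_t^{m_3}u$, write $A=-\partial_x^2-\partial_y^2+y^2$, and integrate by parts once in $z$ per copy of $A$, distributing the derivatives as single applications of $L\in\{\partial_x,\partial_y,y,\mathrm{id}\}$ onto pairs of factors. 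Any term in which one $A$ fails to distribute (leaving a residual $A$ on an internal factor) is rewritten via $A\partial_t^j v=-i\partial_t^{j+1}v\mp\partial_t^j(|u|^2v)$: the $\partial_t^{j+1}v$ piece recombines with the other $\partial_t$'s into a total time derivative of an $\mathcal{S}_{2k+2}$-type integrand, while the quintic remainder (after one final distribution of the leftover $A$) fits the $\mathcal{R}_{2k+2}$ template. The constraint $n_1\leq k$ reflects that the outermost $\partial_t^k Lu_0$ slot is never upgraded by these swaps.

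\textbf{Stage 2 (Bound on $\mathcal{S}_{2k+2}$).} At fixed $t$, I apply H\"older with exponents $(L^2,L^2,L^\infty,L^\infty)$, placing the two $L$-factors in $L^2$ and the two non-$L$ factors in $L^\infty$ through the Sobolev embedding $\H^{1+\delta'}(\R^2)\hookrightarrow L^\infty(\R^2)$, and using Proposition~\ref{Proposition-partial_t estimate} together with \eqref{eq-equivalence} to upgrade each norm to the $\H$-scale. Interpolating $\|u\|_{\H^\sigma}\leq \|u\|_{\H^1}^{1-\theta(\sigma)}\|u\|_{\H^{2k+2}}^{\theta(\sigma)}$ with $\theta(\sigma)=(\sigma-1)/(2k+1)$ and absorbing the $\|u\|_{\H^1}\leq R$ factors, the four interpolation exponents sum to
\[
\frac{(2k+1)+(2m_1+1)+(2m_2+1+\delta')+(2m_3+1+\delta')-4}{2k+1}=\frac{4k}{2k+1}+\delta
\]
(using $m_1+m_2+m_3=k$), yielding \eqref{eq-modefied energy S}.

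\textbf{Stage 3 (Bound on $\int_0^T\mathcal{R}_{2k+2}$).} Because the constraint $n_1+n_2+n_3=k+1$ allows $\max(n_2,n_3)$ to reach $k+1$, pointwise H\"older as in Stage~2 would require regularity beyond $\H^{2k+2}$ and fails; I must gain from the time integration via the bilinear estimate of Proposition~\ref{proposition-Poiret2.3.13}. I pair the factors as $\{F_{i_1},F_{i_2}\}$, $\{F_{i_3},F_{i_4}\}$ and apply Cauchy--Schwarz in $(t,z)$:
\[
\Big|\int_0^T\mathcal{R}_{2k+2}\,\dd\tau\Big|\leq \|F_{i_1}F_{i_2}\|_{L^2_{t,z}}\|F_{i_3}F_{i_4}\|_{L^2_{t,z}}.
\]
On each pair I Littlewood--Paley decompose and invoke Proposition~\ref{proposition-Poiret2.3.13}; summing the dyadic pieces by Cauchy--Schwarz, the $M^\delta(M/N)^{1/2-\delta}$ saving replaces the naive Sobolev $L^\infty$-embedding cost of $1+\delta'$ per pair by $\tfrac12+\delta$. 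Translating each $\|\partial_t^{n_j}L^{\eps_j}u\|_{X^{s_j,b}_T}$ into Sobolev norms of $\varphi$ via Proposition~\ref{Proposition-strichartz estimate} and Lemma~\ref{Proposition-bougain space estimate}, and summing the four interpolation exponents, the naive total $\tfrac{4k}{2k+1}$ is shifted to $\tfrac{4k+1/2}{2k+1}+\delta=\tfrac{8k+1}{4k+2}+\delta$, as claimed in \eqref{eq-modefied energy R}. The main obstacle is the book-keeping in this stage: for every admissible $(n_1,n_2,n_3)$ one must pick the pairing and the high-regularity slot in each bilinear application so that (i) each Bourgain index $s_j$ lies in the valid range $(0,2]$ of Proposition~\ref{Proposition-strichartz estimate}, (ii) the high-frequency factor in each pair sits on the slot with the fewest $\partial_t$'s (so it can absorb extra regularity), and (iii) the $\tfrac12$-derivative gain is realized uniformly, including the borderline cases $\max(n_2,n_3)=k+1$ and $n_1=k$.
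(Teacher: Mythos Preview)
Your Stage~2 matches the paper's argument and your overall architecture for Stage~3 (Cauchy--Schwarz in $(t,z)$, Littlewood--Paley, Proposition~\ref{proposition-Poiret2.3.13}, then Proposition~\ref{Proposition-strichartz estimate} and interpolation) is also the paper's. But there are two genuine gaps.

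\textbf{Stage 1.} Your starting expression $\Im\int A\partial_t^k u\cdot \overline{A\partial_t^k(|u|^2u)}\,\dd z$ is correct, but the program of ``distributing both copies of $A$'' and then repairing leftover $A$'s via the equation does not produce the templates \eqref{eq-S}--\eqref{eq-R}: those are \emph{quartic}, and the ``quintic remainder'' you generate does not fit $\mathcal{R}_{2k+2}$ as defined. The paper avoids this by first passing one $\sqrt{A}$ across to obtain
\[
\tfrac12\tfrac{d}{dt}\|\partial_t^kAu\|_{L^2}^2=\mp\Re\int \partial_t^k(|u|^2u)\,\partial_t^{k+1}A\bar u\,\dd z,
\]
so that a \emph{single} $A$ is distributed (via one integration by parts in $z$) and the highest time derivative $\partial_t^{k+1}$ sits on a lone $\bar u$. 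Then a Leibniz expansion in $t$ and one ``integration by parts in $t$'' on the top terms yields directly the $d/dt$ of quartic $\mathcal{S}_{2k+2}$-expressions plus quartic $\mathcal{R}_{2k+2}$-remainders, with no quintic terms. Your route can be reconciled with this (substituting $A\partial_t^k u=-i\partial_t^{k+1}u\mp\partial_t^k(|u|^2u)$ on the \emph{first} factor makes the quintic piece $\Im\|L\partial_t^k(|u|^2u)\|_{L^2}^2=0$ vanish and recovers the paper's expression), but as written your description misidentifies what survives.

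\textbf{Stage 3.} The borderline case you flag, $\max(n_2,n_3)=k+1$, is not just book-keeping: Proposition~\ref{Proposition-strichartz estimate} controls $\|\partial_t^n u\|_{X^{s,b}_T}$ only for $s\in(0,2]$, and applying it with $n=k+1$ would require $\varphi\in\H^{2k+2+s}$, which is unavailable. The paper's fix is to use the equation \emph{before} the bilinear estimate: write $\partial_t^{n_2}u_2=i\partial_t^{n_2-1}Au_2\pm i\partial_t^{n_2-1}(|u|^2u)$ (say $n_2\geq1$) and split the integrand into $I+II$. The sextic piece $II$ is harmless by H\"older and Sobolev. For $I$, the factor is now $\partial_t^{n_2-1}Au_2$ with $n_2-1\leq k$; after the bilinear estimate one lands on $\|\partial_t^{n_2-1}u\|_{X^{3/2+\delta,b}_T}$, which is within the range of Proposition~\ref{Proposition-strichartz estimate} and yields the exponent $\tfrac{8k+1}{4k+2}+\delta$. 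Without this substitution step your scheme cannot close at $\H^{2k+2}$ regularity.
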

\begin{proof}
By the equation \eqref{equation}, we have
\begin{align*}
\partial_t^k\sqrt{A}(i\partial_tu) + \partial_t^k\sqrt{A}(Au) \pm \partial_t^k\sqrt{A}(|u|^2u) = 0.
\end{align*}
Then multiplying the equation above by $\partial_t^{k+1}\sqrt{A}\bar{u}$, we get
\begin{align*}
\frac12\frac{d}{dt}\Big( \big\| \partial_t^kAu(t,\cdot) \big\|_{L^2(\R^2)}^2 \Big) = \mp \Re\int_{\R^2}\partial_t^k\sqrt{A}(|u|^2u)\partial_t^{k+1}\sqrt{A}\bar{u}{\rm d}z.
\end{align*}
From the equation above, it suffices to compute $\Re\int_{\R^2}\partial_t^k\sqrt{A}(|u|^2u)\partial_t^{k+1}\sqrt{A}\bar{u}{\rm d}z$. Using symmetry of the operator $\sqrt{A}$ and integration by parts, we have
\begin{align}\label{eq-nonlinear part}
& \Re\int_{\R^2}\partial_t^k\sqrt{A}(|u|^2u)\partial_t^{k+1}\sqrt{A}\bar{u}{\rm d}z = \Re\int_{\R^2}\partial_t^k(|u|^2u)\partial_t^{k+1}A\bar{u}{\rm d}z\\\nonumber
= & ~ \sum_{i=1}^2\Re\int_{\R^2}\partial_t^k\partial_{x_i}(|u|^2u)\partial_t^{k+1}\partial_{x_i}\bar{u}{\rm d}z + \Re\int_{\R^2}\partial_t^k(|u|^2u)\partial_t^{k+1}(y^2\bar{u}){\rm d}z = \textup{Term~1} + \textup{Term~2},
\end{align}
here we use $\partial_{x_i}, i=1,2$ to denote  $\partial_x$ and $\partial_y$ respectively.

For $\textup{Term~1}$, we compute
\begin{equation}\label{eq-nonlinear part1}
\begin{aligned}
& \textup{Term~1}=  \sum_{i=1}^2\Big[ \Big( 2\Re\int_{\R^2}|u|^2\partial_t^k\partial_{x_i}u\partial_t^{k+1}\partial_{x_i}\bar{u}{\rm d}z + \Re\int_{\R^2} u^2\partial_t^k\partial_{x_i}\bar{u}\partial_t^{k+1}\partial_{x_i}\bar{u}{\rm d}z \Big)\\
& + \sum_{\substack{n_1+n_2+n_3=k,\\n_1<k}}\Re\Big( a_{n_1,n_2,n_3}\int_{\R^2}\partial_t^{n_1}\partial_{x_i}u\partial_t^{n_2}u\partial_t^{n_3}\bar{u}\partial_t^{k+1}\partial_{x_i}\bar{u}{\rm d}z\\
& + b_{n_1,n_2,n_3}\int_{\R^2}\partial_t^{n_1}\partial_{x_i}\bar{u}\partial_t^{n_2}u\partial_t^{n_3}u\partial_t^{k+1}\partial_{x_i}\bar{u}{\rm d}z \Big) \Big],
\end{aligned}
\end{equation}
where $a_{n_1,n_2,n_3},b_{n_1,n_2,n_3}$ are suitable real numbers. Then we can rewrite \eqref{eq-nonlinear part1} as
\begin{align*}
\begin{aligned}
& \eqref{eq-nonlinear part1} =  \frac{d}{dt}\Big[ \sum_{i=1}^2\Big( \int_{\R^2}|\partial_t^k\partial_{x_i}u|^2|u|^2{\rm d}z + \frac12\Re\int_{\R^2}\big(\partial_t^k\partial_{x_i}\bar{u}\big)^2u^2{\rm d}z \Big) \Big]\\
& - \sum_{i=1}^2\Big( \int_{\R^2}|\partial_t^k\partial_{x_i}u|^2\partial_t\big(|u|^2\big){\rm d}z + \frac12\Re\int_{\R^2}\big(\partial_t^k\partial_{x_i}\bar{u}\big)^2\partial_t\big(u^2\big){\rm d}z \Big)\\
& + \frac{d}{dt}\Big[ \sum_{i=1}^2\sum_{\substack{n_1+n_2+n_3=k,\\n_1<k}}\Re\Big( a_{n_1,n_2,n_3}\int_{\R^2}\partial_t^{n_1}\partial_{x_i}u\partial_t^{n_2}u\partial_t^{n_3}\bar{u}\partial_t^{k}\partial_{x_i}\bar{u}{\rm d}z\\
& + b_{n_1,n_2,n_3}\int_{\R^2}\partial_t^{n_1}\partial_{x_i}\bar{u}\partial_t^{n_2}u\partial_t^{n_3}u\partial_t^{k}\partial_{x_i}\bar{u}{\rm d}z \Big) \Big]\\
& - \sum_{i=1}^2\sum_{\substack{n_1+n_2+n_3=k+1,\\n_1\leq k}}\Re\Big( \tilde{a}_{n_1,n_2,n_3}\int_{\R^2}\partial_t^{n_1}\partial_{x_i}u\partial_t^{n_2}u\partial_t^{n_3}\bar{u}\partial_t^{k}\partial_{x_i}\bar{u}{\rm d}z\\
& + \tilde{b}_{n_1,n_2,n_3}\int_{\R^2}\partial_t^{n_1}\partial_{x_i}\bar{u}\partial_t^{n_2}u\partial_t^{n_3}u\partial_t^{k}\partial_{x_i}\bar{u}{\rm d}z \Big),
\end{aligned}
\end{align*}
where the last expression is a time derivative of the structure \eqref{eq-S} with $Lu = \partial_{x}u, \partial_yu$ plus the type \eqref{eq-R} with $Lu = \partial_{x}u, \partial_yu$.

As for $\textup{Term~2}$, we have
\begin{align*}
\Re\int_{\R^2}\partial_t^k(|u|^2u)\partial_t^{k+1}(y^2\bar{u}){\rm d}z = \Re\int_{\R^2}\partial_t^k\big(\langle y \rangle|u|^2u\big)\partial_t^{k+1}\big(\langle y \rangle\bar{u}\big){\rm d}z - \Re\int_{\R^2}\partial_t^k(|u|^2u)\partial_t^{k+1}\bar{u}{\rm d}z.
\end{align*}
Similar to the above estimate, and this expression will correspond to a time derivative of terms of type \eqref{eq-S} with $Lu = \langle y \rangle u$ or $Lu = u$, plus the type \eqref{eq-R} with $Lu = \langle y \rangle u$ or $Lu = u$. Therefore, this completes the proof of \eqref{eq-modified energy}.

Next, we prove \eqref{eq-modefied energy S}. From the expression \eqref{eq-S} and H\"older's inequality, we have
\begin{align}\nonumber
& \Big| \int_{\R^2}\partial_t^kLu_0\partial_t^{m_1}Lu_1\partial_t^{m_2}u_2\partial_t^{m_3}u_3\dd z \Big| \leq C \big\| \partial_t^kLu_0 \big\|_{L^2(\R^2)}\big\| \partial_t^{m_1}Lu_1 \big\|_{L^2(\R^2)}\big\| \partial_t^{m_2}u_2 \big\|_{L^\infty(\R^2)}\big\| \partial_t^{m_3}u_3 \big\|_{L^\infty(\R^2)}\\\nonumber
\leq & ~ C \big\| \partial_t^kLu_0 \big\|_{L^2(\R^2)}\big\| \partial_t^{m_1}Lu_1 \big\|_{L^2(\R^2)}\big\| \partial_t^{m_2}u \big\|_{\H^1}^{1-\delta}\big\| \partial_t^{m_2}u \big\|_{\H^2}^{\delta}\big\| \partial_t^{m_3}u \big\|_{\H^1}^{1-\delta}\big\| \partial_t^{m_3}u \big\|_{\H^2}^{\delta}\\\label{eq-s estimate}
\leq & ~ C \big\| \partial_t^{k}u \big\|_{\H^1}\big\| \partial_t^{m_1}u \big\|_{\H^1}\big\| \partial_t^{m_2}u \big\|_{\H^1}^{1-\delta}\big\| \partial_t^{m_2}u \big\|_{\H^2}^{\delta}\big\| \partial_t^{m_3}u \big\|_{\H^1}^{1-\delta}\big\| \partial_t^{m_3}u \big\|_{\H^2}^{\delta},
\end{align}
where we used the Sobolev embedding and \eqref{eq-equivalence}. Then using \eqref{eq-Strichartz estimate} and \eqref{eq-Strichartz estimate1}, we obtain
\begin{align*}
\eqref{eq-s estimate} \leq C \|\varphi\|_{\H^{2k+1}}\|\varphi\|_{\H^{2m_1+1}}\|\varphi\|_{\H^{2m_2+1}}^{1-\delta}\|\varphi\|_{\H^{2m_3+1}}^{1-\delta}
\|\varphi\|_{\H^{2m_2+2}}^\delta\|\varphi\|_{\H^{2m_3+2}}^\delta,
\end{align*}
where $\delta>0$ is small enough. Next we use interpolation and choose $\eta,\eta_1,\eta_2,\eta_3,\eta'_2,\eta'_3\in[0,1]$ such that
$
(2k+2)\eta + (1-\eta) = 2k+1, (2k+2)\eta_i + (1-\eta_i) = 2m_i+1$, with $i=1,2,3,
$
and
$
(2k+2)\eta'_i + (1-\eta'_i) = 2m_i+2$, with $i=2,3.
$
Then we obtain
\begin{align*}
\Big| \int_{\R^2}\partial_t^kLu_0\partial_t^{m_1}Lu_1\partial_t^{m_2}u_2\partial_t^{m_3}u_3\dd z \Big| \leq C \|\varphi\|_{\H^{2k+2}}^{\eta+\eta_1+(1-\delta)(\eta_2+\eta_3)+\delta(\eta'_2+\eta'_3)},
\end{align*}
and we conclude the proof of \eqref{eq-modefied energy S} by $\eta+\eta_1+(1-\delta)(\eta_2+\eta_3)+\delta(\eta'_2+\eta'_3)=\frac{4k}{2k+1}+\frac{2\delta}{2k+1}$.

Finally, we just need to estimate \eqref{eq-modefied energy R}. From the formula \eqref{eq-R}, we may assume $n_2\geq1$(same with $n_3\geq1$) and obtain
\begin{align*}
\begin{aligned}
& \Big| \int_0^T\int_{\R^2}\partial_t^kLu_0\partial_t^{n_1}Lu_1\partial_t^{n_2}u_2\partial_t^{n_3}u_3{\rm d}z{\rm d}\tau \Big| \leq \Big| \int_0^T\int_{\R^2}\big(\partial_t^kLu_0\big)\big(\partial_t^{n_1}Lu_1\big)\big(\partial_t^{n_2-1}Au_2\big)\big(\partial_t^{n_3}u_3\big){\rm d}z{\rm d}\tau \Big|\\
& + \int_0^T\int_{\R^2}\big|\partial_t^kLu\big|\big|\partial_t^{n_1}Lu\big|\big|\partial_t^{n_2-1}(|u|^2u)\big|\big|\partial_t^{n_3}u_3\big|{\rm d}z{\rm d}\tau
\triangleq I + II,
\end{aligned}
\end{align*}
where $n_1,n_2,n_3$ satisfy $n_1+n_2+n_2=k+1$ with $n_1\leq k$.

We first consider the term $II$. By the Cauchy-Schwarz inequality, Sobolev embedding and Proposition \ref{Proposition-strichartz estimate}, we get
\begin{align*}
\begin{aligned}
II \leq & ~ \int_0^T\big\| \partial_t^kLu(\tau) \big\|_{L^2(\R^2)}\big\| \partial_t^{n_1}Lu(\tau) \big\|_{L^2(\R^2)}\dd \tau\big\| \partial_t^{n_2-1}(|u|^2u) \big\|_{L^\infty((0,T);\H^{1+\delta})}\big\| \partial_t^{n_3}u \big\|_{L^\infty((0,T);\H^{1+\delta})}\\
\leq & ~ C \|\varphi\|_{\H^{2k+1}}\|\varphi\|_{\H^{2n_1+1}}\|\varphi\|_{\H^{2n_3+1}}^{1-\delta}\|\varphi\|_{\H^{2n_3+2}}^\delta\big\| \partial_t^{n_2-1}(|u|^2u) \big\|_{L^\infty((0,T);\H^{1+\delta})}\\
\leq & ~ C \|\varphi\|_{\H^{2k+2}}^{\theta+\theta_1+(1-\delta)\theta_3}\|\varphi\|_{\H^1}^{3-(\theta+\theta_1+(1-\delta)\theta_3+\delta\theta')}\big\| \partial_t^{n_2-1}(|u|^2u) \big\|_{L^\infty((0,T);\H^{1+\delta})},
\end{aligned}
\end{align*}
where $(2k+2)\theta + (1-\theta) = 2k+1, (2k+2)\theta_i + (1-\theta_i) = 2n_i+1$, with $i=1,3$, and $(2k+2)\theta' + (1-\theta') = 2n_3+2$. Therefore, we obtain
\begin{equation}\label{eq-r-ii}
II \leq C \|\varphi\|_{\H^{2k+2}}^{\frac{2k+2n_1+2n_3}{2k+1}}\|\varphi\|_{\H^{2k+2}}^\delta\big\| \partial_t^{n_2-1}(|u|^2u) \big\|_{L^\infty((0,T);\H^{1+\delta})},
\end{equation}
and it suffices to estimate $\big\| \partial_t^{n_2-1}(|u|^2u) \big\|_{L^\infty((0,T);\H^{1+\delta})}$. In fact, using Leibniz rule and the fact that $\H^{1+\delta}$ is an algebra, we have
\begin{align*}
\begin{aligned}
& \big\| \partial_t^{n_2-1}(|u|^2u) \big\|_{L^\infty((0,T);\H^{1+\delta})}\\
\leq & ~ C\sum_{j_1+j_2+j_3=n_2-1}\big\| \partial_t^{j_1}u \big\|_{L^\infty((0,T);\H^{1+\delta})}\big\| \partial_t^{j_2}u \big\|_{L^\infty((0,T);\H^{1+\delta})}\big\| \partial_t^{j_3}u \big\|_{L^\infty((0,T);\H^{1+\delta})}\\
\leq & ~ C\sum_{j_1+j_2+j_3=n_2-1} \|\varphi\|_{\H^{2j_1+1}}^{1-\delta}\|\varphi\|_{\H^{2j_2+1}}^{1-\delta}
\|\varphi\|_{\H^{2j_3+1}}^{1-\delta}\|\varphi\|_{\H^{2k+2}}^{3\delta}\\
\leq & ~ C\|\varphi\|_{\H^{2k+2}}^{(1-\delta)(\eta_1+\eta_2+\eta_3)}
\|\varphi\|_{\H^1}^{(1-\delta)(3-\eta_1-\eta_2-\eta_3)}\|\varphi\|_{\H^{2k+2}}^{3\delta},
\end{aligned}
\end{align*}
where $(2k+2)\eta_i + (1-\eta_i) = 2j_i+1$, with $i=1,2,3$. Then we have
\begin{equation}\label{eq-r-ii1}
\big\| \partial_t^{n_2-1}(|u|^2u) \big\|_{L^\infty((0,T);\H^{1+\delta})} \leq C\|\varphi\|_{\H^{2k+2}}^{\frac{2n_2-2}{2k+1}+3\delta}.
\end{equation}
Therefore, combining \eqref{eq-r-ii} and \eqref{eq-r-ii1}, we get
\begin{align*}
II \leq C \|\varphi\|_{\H^{2k+2}}^{\frac{2k+2n_1+2n_3+2n_2-2}{2k+1}+4\delta} \leq C \|\varphi\|_{\H^{2k+2}}^{\frac{4k}{2k+1}+\delta}.
\end{align*}

Now we focus on estimating $I$, by Littlewood-Paley decomposition, and depending on the frequencies $N_0,N_1,N_2,N_3$, we will split the sum in serval cases
\begin{align*}
I \leq & \sum_{N_0,N_1,N_2,N_3} \Big| \int_0^T\int_{\R^2} \Delta_{N_0}\big(\partial_t^kLu_0\big)\Delta_{N_1}\big(\partial_t^{n_1}Lu_1\big)\Delta_{N_2}\big(\partial_t^{n_2-1}Au_2\big)
\Delta_{N_3}\big(\partial_t^{n_3}u_3\big){\rm d}z{\rm d}\tau \Big|\\
\leq & \sum_{\min\{N_0,N_2\}\geq \max\{N_1,N_3\}} \Big| \int_0^T\int_{\R^2} \Delta_{N_0}\big(\partial_t^kLu_0\big)\Delta_{N_1}\big(\partial_t^{n_1}Lu_1\big)\Delta_{N_2}\big(\partial_t^{n_2-1}Au_2\big)
\Delta_{N_3}\big(\partial_t^{n_3}u_3\big){\rm d}z{\rm d}\tau \Big|\\
 & + \sum_{\min\{N_0,N_1\}\geq \max\{N_2,N_3\}} \Big| \int_0^T\int_{\R^2} \Delta_{N_0}\big(\partial_t^kLu_0\big)\Delta_{N_1}\big(\partial_t^{n_1}Lu_1\big)\Delta_{N_2}\big(\partial_t^{n_2-1}Au_2\big)
\Delta_{N_3}\big(\partial_t^{n_3}u_3\big){\rm d}z{\rm d}\tau \Big|\\
 & + \sum_{\min\{N_0,N_3\}\geq \max\{N_1,N_2\}} \Big| \int_0^T\int_{\R^2} \Delta_{N_0}\big(\partial_t^kLu_0\big)\Delta_{N_1}\big(\partial_t^{n_1}Lu_1\big)\Delta_{N_2}\big(\partial_t^{n_2-1}Au_2\big)
\Delta_{N_3}\big(\partial_t^{n_3}u_3\big){\rm d}z{\rm d}\tau \Big|\\
 & + \sum_{\min\{N_1,N_2\}\geq \max\{N_0,N_3\}} \Big| \int_0^T\int_{\R^2} \Delta_{N_0}\big(\partial_t^kLu_0\big)\Delta_{N_1}\big(\partial_t^{n_1}Lu_1\big)\Delta_{N_2}\big(\partial_t^{n_2-1}Au_2\big)
\Delta_{N_3}\big(\partial_t^{n_3}u_3\big){\rm d}z{\rm d}\tau \Big|\\
 & + \sum_{\min\{N_1,N_3\}\geq \max\{N_0,N_2\}} \Big| \int_0^T\int_{\R^2} \Delta_{N_0}\big(\partial_t^kLu_0\big)\Delta_{N_1}\big(\partial_t^{n_1}Lu_1\big)\Delta_{N_2}\big(\partial_t^{n_2-1}Au_2\big)
\Delta_{N_3}\big(\partial_t^{n_3}u_3\big){\rm d}z{\rm d}\tau \Big|\\
 & + \sum_{\min\{N_2,N_3\}\geq \max\{N_0,N_1\}} \Big| \int_0^T\int_{\R^2} \Delta_{N_0}\big(\partial_t^kLu_0\big)\Delta_{N_1}\big(\partial_t^{n_1}Lu_1\big)\Delta_{N_2}\big(\partial_t^{n_2-1}Au_2\big)
\Delta_{N_3}\big(\partial_t^{n_3}u_3\big){\rm d}z{\rm d}\tau \Big|\\
=& ~ \mathcal{A} + \mathcal{B} + \mathcal{C} + \mathcal{D} + \mathcal{E} + \mathcal{F}.
\end{align*}

We apply the Cauchy--Schwarz inequality and Theorem \ref{Theorem-linear bilinear estimate} to estimate the case $\mathcal{A}$ by
\begin{align*}
\begin{aligned}
& \Big| \int_0^T\int_{\R^2} \Delta_{N_0}\big(\partial_t^kLu_0\big)\Delta_{N_1}\big(\partial_t^{n_1}Lu_1\big)\Delta_{N_2}\big(\partial_t^{n_2-1}Au_2\big)
\Delta_{N_3}\big(\partial_t^{n_3}u_3\big){\rm d}z{\rm d}\tau \Big|\\
\leq & ~ \big\| \Delta_{N_0}\big(\partial_t^kLu_0\big)\Delta_{N_1}\big(\partial_t^{n_1}Lu_1\big) \big\|_{L^2((0,T);L^2(\R^2))}\big\| \Delta_{N_2}\big(\partial_t^{n_2-1}Au_2\big)\Delta_{N_3}\big(\partial_t^{n_3}u_3\big) \big\|_{L^2((0,T);L^2(\R^2))}\\
\leq & ~ C \Big(\frac{N_1N_3}{N_0N_2}\Big)^{\frac12-\delta}\!\big\| \Delta_{N_0}\big(\partial_t^kLu_0\big) \big\|_{X^{0,b}_T}\big\| \Delta_{N_1}\big(\partial_t^{n_1}Lu_1\big) \big\|_{X^{0,b}_T}\big\| \Delta_{N_2}\big(\partial_t^{n_2-1}Au_2\big) \big\|_{X^{0,b}_T}\big\| \Delta_{N_3}\big(\partial_t^{n_3}u_3\big) \big\|_{X^{0,b}_T}\\
\leq & ~ C \Big(\frac{N_3}{N_2}\Big)^{\frac12-\delta}\big\| \Delta_{N_0}\big(\partial_t^kLu_0\big) \big\|_{X^{0,b}_T}\big\| \Delta_{N_1}\big(\partial_t^{n_1}Lu_1\big) \big\|_{X^{0,b}_T}\big\| \Delta_{N_2}\big(\partial_t^{n_2-1}Au_2\big) \big\|_{X^{0,b}_T}\big\| \Delta_{N_3}\big(\partial_t^{n_3}u_3\big) \big\|_{X^{0,b}_T}\\
\leq & ~ C \big\| \Delta_{N_0}\big(\partial_t^kLu_0\big) \big\|_{X^{0,b}_T}\big\| \Delta_{N_1}\big(\partial_t^{n_1}Lu_1\big) \big\|_{X^{0,b}_T}\big\| \Delta_{N_2}\big(\partial_t^{n_2-1}Au_2\big) \big\|_{X^{-\frac12+\delta,b}_T}\big\| \Delta_{N_3}\big(\partial_t^{n_3}u_3\big) \big\|_{X^{\frac12-\delta,b}_T}.
\end{aligned}
\end{align*}
Summarizing and using Lemma \ref{Proposition-bougain space estimate}, we  estimate $\mathcal{A}$ as
\begin{equation}
\begin{aligned}\label{eq-Rcase1}
\mathcal{A} \leq & ~ C \big\| \partial_t^kLu \big\|_{X^{\delta,b}_T}\big\| \partial_t^{n_1}Lu \big\|_{X^{\delta,b}_T}\big\| \partial_t^{n_2-1}Au \big\|_{X^{-\frac12+\delta,b}_T}\big\| \partial_t^{n_3}u \big\|_{X^{\frac12-\delta,b}_T}\\
\leq & ~ C \big\| \partial_t^ku \big\|_{X^{1+\delta,b}_T}\big\| \partial_t^{n_1}u \big\|_{X^{1+\delta,b}_T}\big\| \partial_t^{n_2-1}u \big\|_{X^{\frac32+\delta,b}_T}\big\| \partial_t^{n_3}u \big\|_{X^{\frac12-\delta,b}_T}
\end{aligned}
\end{equation}
for $b>\frac12$.

In order to reduce to the case $\mathcal{A}$, we use the inequality $N_2^2 \leq N_0N_1$ so that $\frac{N_2N_3}{N_0N_1}\leq\frac{N_3}{N_2}$ for $\mathcal{B}$. Similarly for $\mathcal{C}$, $\mathcal{D}$, $\mathcal{E}$, and $\mathcal{F}$, we use the inequality $N_1N_2^2 \leq N_0N_3^2$ so that $\frac{N_1N_2}{N_0N_3}\leq\frac{N_3}{N_2}$, $N_0 \leq N_1$ such that $\frac{N_0N_3}{N_1N_2}\leq\frac{N_3}{N_2}$, $N_0N_2^2 \leq N_1N_3^2$ so that $\frac{N_0N_2}{N_1N_3}\leq\frac{N_3}{N_2}$, and $N_0N_1 \leq N_3^2$ so that $\frac{N_0N_1}{N_2N_3}\leq\frac{N_3}{N_2}$ respectively. Hence, we have to estimate \eqref{eq-Rcase1}. By \eqref{eq-Strichartz estimate} and \eqref{eq-Strichartz estimate1} in Proposition \ref{Proposition-strichartz estimate},
\begin{equation}
\begin{aligned}\label{eq-Rcase1'}
\eqref{eq-Rcase1} \leq  \|\varphi\|_{\H^{2k+1}}^{1-\delta}\|\varphi\|_{\H^{2n_1+1}}^{1-\delta}
\|\varphi\|_{\H^{2n_2-1}}^{\frac12-\delta}\|\varphi\|_{\H^{2n_2}}
^{\frac12+\delta}\|\varphi\|_{\H^{2n_3}}^{\frac12-\delta}\|\varphi\|_{\H^{2n_3+1}}^{\frac12+\delta}
\|\varphi\|_{\H^{2n_1+2}}
^{\delta}\|\varphi\|_{\H^{2k+2}}^\delta.
\end{aligned}
\end{equation}
By interpolation and the \emph{a priori} bound on the $\H^1$ norm, we have
\begin{align*}
\begin{aligned}
\eqref{eq-Rcase1'}
\leq  C \|\varphi\|_{\H^{2k+2}}^{(1-\delta)\theta+(1-\delta)\theta_1+(\frac12-\delta)\gamma_2+(\frac12+\delta)\eta_2+(\frac12-\delta)\eta_3
+(\frac12+\delta)\theta_3+\delta\alpha_1+\delta},
\end{aligned}
\end{align*}
where $C$ contains some power of $\|\varphi\|_{\H^{1}}$, $\theta=\frac{2k}{2k+1}, \alpha_1=\frac{2n_1+1}{2k+1}, \gamma_2=\frac{2n_2-2}{2k+1}$, $\theta_i=\frac{2n_i}{2k+1}$ with $i=1,3$, and $\eta_j=\frac{2n_j-1}{2k+1}$ with $j=2,3$.

We need to deal  with $n_3\geq1$ and $n_3=0$ differently: for $n_3\geq1$, we obtain
$$
(1-\delta)\theta+(1-\delta)\theta_1+\left(\frac12-\delta\right)\gamma_2+\left(\frac12+\delta\right)\eta_2+\left(\frac12-\delta\right)\eta_3
+\left(\frac12+\delta\right)\theta_3+\delta\alpha_1 +\delta = \frac{4k}{2k+1} + \frac{4\delta}{2k+1},
$$
while for $n_3=0$,
$$
(1-\delta)\theta+(1-\delta)\theta_1+(\frac12-\delta)\gamma_2+(\frac12+\delta)\eta_2
+\delta\alpha_1 +\delta = \frac{8k+1}{4k+2} + \frac{3\delta}{2k+1}.
$$
This completes the proof of \eqref{eq-modefied energy R}, after relabeling $\frac{3\delta}{2k+1}$ to $\delta$.
\end{proof}

Finally, by using suitable energies and estimates, we prove the Sobolev norms growth provided in Theorem \ref{Theorem-growth}.

\begin{proof}[Proof of Theorem \ref{Theorem-growth}]

By integrating the identity \eqref{eq-modified energy} on the strip $(0,T)$  in Proposition \ref{Propostion-modified energy}, we have
\begin{align*}
\frac12\big\| \partial_t^kAu(T,\cdot) \big\|_{L^2(\R^2)}^2 + \mathcal{S}_{2k+2}(u)(T) = \frac12\big\| \partial_t^kAu(0,\cdot) \big\|_{L^2(\R^2)}^2 + \mathcal{S}_{2k+2}(u)(0) + \int_0^T\mathcal{R}_{2k+2}(u)(\tau)\dd \tau,
\end{align*}
where $T$ is the one defined in Proposition \ref{Propostion-modified energy} and $R$ is defined as
\begin{equation}\label{eq-A11}
\sup_{t\in\mathbb R}\|u(t, \cdot)\|_{\H^1} = R,
\end{equation}
and $R<\infty$ by \eqref{eq-A1} in Theorem \ref{Theorem-growth}. Then we use \eqref{eq-modefied energy S} and \eqref{eq-modefied energy R} to obtain
\begin{align*}
\frac12\big\| \partial_t^kAu(T,\cdot) \big\|_{L^2(\R^2)}^2 - \frac12\big\| \partial_t^kAu(0,\cdot) \big\|_{L^2(\R^2)}^2 \leq C\|u(0,\cdot)\|_{\H^{2k+2}}^{\frac{8k+1}{4k+2}+\delta}.
\end{align*}

Next by \eqref{eq-A11}, we can iterate the bound above with the same constants,
\begin{align*}
\frac12\big\| \partial_t^kAu\big((n+1)T,\cdot\big) \big\|_{L^2(\R^2)}^2 - \frac12\big\| \partial_t^kAu(nT,\cdot) \big\|_{L^2(\R^2)}^2 \leq C\|u(nT,\cdot)\|_{\H^{2k+2}}^{\frac{8k+1}{4k+2}+\delta}
\end{align*}
for all $t\in\big(nT,(n+1)T\big)$ with $n\in\N$. By summing up for $n\in[0,N-1]$, we have
\begin{align*}
\big\| \partial_t^kAu(NT,\cdot) \big\|_{L^2(\R^2)}^2 \leq C \sum_{n\in\{ 0,...,N-1 \}}\|u(nT,\cdot)\|_{\H^{2k+2}}^{\frac{8k+1}{4k+2}+\delta},
\end{align*}
and it implies
\begin{align*}
\sup_{n\in[0,N]}\big\| \partial_t^kAu(nT,\cdot) \big\|_{L^2(\R^2)}^2 \leq CN \Big(\sup_{n\in[0,N]}\|u(nT,\cdot)\|_{\H^{2k+2}}\Big)^{\frac{8k+1}{4k+2}+\delta}.
\end{align*}
Then using Proposition \ref{Proposition-partial_t estimate}, we have
\begin{align*}
\sup_{n\in[0,N]}\| u(nT,\cdot) \|_{\H^{2k+2}} \leq CN^{\frac{2}{3}(2k+1)+\delta}.
\end{align*}
In particular,
\begin{align*}
\| u(NT,\cdot) \|_{\H^{2k+2}} \leq CN^{\frac{2}{3}(2k+1)+\delta},
\end{align*}
for all $ N\in\N$. Using Theorem \ref{Theorem-well-posedness}, it is easy to deduce that the estimate above implies
\begin{align*}
\sup_{t\in[NT,(N+1)T]}\| u(t,\cdot) \|_{\H^{2k+2}} \leq CN^{\frac{2}{3}(2k+1)+\delta}
\end{align*}
provided that we suitably modify the constant $C$. Summarizing, we get
\begin{align*}
\| u(t,\cdot) \|_{\H^{2k+2}} \leq Ct^{\frac{2}{3}(2k+1)+\delta}
\end{align*}
for all $t>0$. This finishes the proof of Theorem 1.
\end{proof}


\begin{thebibliography}{10}

 \bibitem{AkahoriIto2009} T. Akahori, K. Ito, \emph{Multilinear eigenfunction estimates for the harmonic oscillator and the nonlinear {Schr{\"o}dinger} equation with the harmonic potential}, Ann. Henri Poincar{\'e}, \textbf{10}(2009), 673--709.

\bibitem{ACS}P. Antonelli, R. Carles, and J. D. Silva. \emph{Scattering for nonlinear {Schr{\"o}dinger} equation under partial harmonic confinement}, Commun. Math. Phys., \textbf{334}(2015), 367-396.
\bibitem{AC20} A. H. Ardila, and R. Carles. \emph{Global dynamics below the ground states for NLS under partial harmonic confinement}, Commun. Math. Sci., \textbf{19}(2021), 993-1032.
\bibitem{B96} J. Bourgain. \emph{On the growth in time of higher Sobolev norms of smooth solutions of Hamiltonian PDE}, Int. Math. Resear Notices, \textbf{6}(1996), 277-304.
\bibitem{BTT13} N. Burq, L. Thomann, and N. Tzvetkov. \emph{Long time dynamics for the one-dimensional nonlinear Schr\"odinger equation}, Ann. Inst. Fourier, \textbf{63}(2013), no. 6, 2137-2198.
\bibitem{DG09} J. Dziuba\'nski, and P. G{\l}owacki. \emph{Sobolev spaces related to Schr\"odinger operators with polynomial potential}, Math. Z., \textbf{262}(2009), no. 4, 881-894.
\bibitem{M57} S. G. Mikhlin. \emph{Fourier integrals and multiple singular integrals}, Vest. Leningrad Univ. Ser. Mat., \textbf{12}(1957), 143-155.
\bibitem{P14} F. Planchon. \emph{On the cubic NLS on 3D compact domains}, J. Inst. Math. Jussieu, \textbf{13}(2014), no. 1, 1-18.
\bibitem{P12} A. Poiret. \emph{Equations de Schr\"odinger \`a donn\'ees al\'eatoires : construction de solutions globales pour des \'equations sur-critiques}, PhD thesis, 2012. Th\`ase de doctorat, Math\'ematiques Paris-Saclay, available at http://www.theses.fr/2012PA112333/document.
\bibitem{PTV17} F. Planchon, N. Tzvetkov, and N. Visciglia. \emph{On the growth of Sobolev norms for NLS on 2D and 3D manifolds}, Anal. PDE., \textbf{10}(2017), no. 5, 1123-1147.
\bibitem{PTV21} F. Planchon, N. Tzvetkov, and N. Visciglia. \emph{Growth of Sobolev norms for 2d NLS with harmonic potential}, Rev. Mat. Iberoam, (2022), Doi:10.4171/RMI/1371.
\bibitem{S97} G. Staffilani, \emph{Quadratic forms for a 2-D semilinear Schr\"odinger equation}, Duke Math. J., \textbf{86}(1997), no. 1, 79-107.
\bibitem{SWX22} X. Su, Y. Wang, and G. Xu. \emph{A Mikhlin-H\"ormander multiplier theorem for the partial harmonic oscillator}, To appear in Forum Mathematicum.
\bibitem{SWX22'} X. Su, Y. Wang, and G. Xu. \emph{Riesz transforms and Sobolev spaces associated to the partial harmonic oscillator}, ArXiv: 2207.10461v2, 2022.
\bibitem{Josser}C. Josser, Y. Pomeau,  \emph{Nonlinear aspects of the theory of Bose-Einstein condensates}, Nonlinearity, \textbf{14}(2001), 25-62.
\bibitem{Than93book} S. Thangavelu, \emph{Lectures on Hermite and Laguerre Expansions}, Math. Notes  \textbf{42},
Princeton Univ. Press, Princeton, 1993.
\end{thebibliography}
\end{document}